\numberwithin{equation}{section}
\newtheorem{theorem}{Theorem}
\newtheorem{proposition}[theorem]{Proposition}
\newtheorem{lemma}[theorem]{Lemma}
\newtheorem{corollary}[theorem]{Corollary}
\theoremstyle{definition}
\theoremstyle{definition}\newtheorem{definition}[theorem]{Definition}
\theoremstyle{definition}\newtheorem{remark}[theorem]{Remark}
\theoremstyle{definition}
\theoremstyle{definition}
\theoremstyle{definition}
\theoremstyle{definition}
\numberwithin{theorem}{section}
\def\proofof [#1] {\noindent {\bf Proof of #1. } }
\def\v #1.{\mathord{\raise 3pt\hbox{\mathsurround=0pt $\mathop\vee\limits^{#1}$\mathsurround=5pt}}}
\renewcommand{\AA}{\mathfrak{A}}
\newcommand{\BB}{\mathfrak{B}}
\newcommand{\FF}{\mathfrak{F}}
\newcommand{\A}{\mathcal{A}}
\newcommand{\B}{\mathcal{B}}
\newcommand{\D}{\mathcal{D}}
\newcommand{\I}{\mathcal{I}}
\newcommand{\K}{\mathcal{K}}
\renewcommand{\H}{\mathcal{H}}
\newcommand{\F}{\mathcal{F}}
\newcommand{\G}{\mathcal{G}}
\renewcommand{\L}{\mathcal{L}}
\newcommand{\N}{\mathbb{N}}
\newcommand{\NN}{\mathbb{N}_0}
\newcommand{\R}{\mathbb{R}}
\newcommand{\C}{\mathbb{C}}
\newcommand{\T}{\mathcal{T}}
\renewcommand{\S}{\mathcal{S}}
\newcommand{\unit}{\mathbf{1}}
\newcommand{\bp}{\begin{proof}}
\newcommand{\ep}{\end{proof}}
\newcommand{\bdp}{\begin{dproof}}
\newcommand{\edp}{\end{dproof}}
\newcommand{\ra}{\rightarrow}
\newcommand{\Uone}{\operatorname{U}(1)}
\newcommand{\CAR}{\operatorname{CAR}}
\newcommand{\Diff}{\operatorname{Diff}(S^1)}
\newcommand{\Mat}{\operatorname{M}}
\newcommand{\locn}{\operatorname{ln}}
\newcommand{\SVir}{\operatorname{SVir}}
\newcommand{\Vir}{\operatorname{Vir}}
\newcommand{\tr}{\operatorname{tr }}
\newcommand{\Ad}{\operatorname{Ad }}
\newcommand{\rmd}{\operatorname{d}}
\newcommand{\rmi}{\operatorname{i}}
\newcommand{\geo}{\operatorname{geo}}
\newcommand{\salg}{\ ^*\operatorname{-alg}}
\newcommand{\rme}{\operatorname{e}}
\newcommand{\eps}{\varepsilon}
\newcommand{\sgn}{\operatorname{sgn }}
\newcommand{\dom}{\operatorname{dom }}
\newcommand{\im}{\operatorname{im }}
\renewcommand{\ker}{\operatorname{ker }}
\newcommand{\spa}{\operatorname{span }}
\newcommand{\Cci}{C^{\infty}}
\newcommand{\id}{\operatorname{id}}
\newcommand{\ie}{{i.e.,\/}\ }
\newcommand{\eg}{{e.g.\/}\ }
\newcommand{\cf}{{cf.\/}\ }
\author{Robin Hillier} 
\subjclass{81T28, 81T75, 46L55. Keywords: algebraic conformal quantum field theory, entire cyclic cohomology, JLO cocycle, KMS condition, supersymmetry.}
\thanks{Formerly Marie-Curie Fellow of the Istituto Nazionale di Alta Matematica, Roma, and supported by the ERC
Advanced Grant 227458 "Operator Algebras and Conformal Field Theory".}
\title{Super-KMS Functionals for \linebreak Graded-Local Conformal Nets}
\address{Department of Mathematics and Statistics, Lancaster University, Lancaster LA1 4YF, UK\\
E-mail: {\tt r.hillier@lancaster.ac.uk}}
\begin{document}

\begin{abstract}
Motivated by a few preceding papers and a question of R.~Longo, we introduce super-KMS functionals for graded translation-covariant nets over $\R$ with superderivations, roughly speaking as a certain supersymmetric modification of classical KMS states on translation-covariant nets over $\R$, fundamental objects in chiral algebraic quantum field theory. Although we are able to make a few statements concerning their general structure, most properties will be studied in the setting of specific graded-local (super-) conformal models. In particular, we provide a constructive existence and partial uniqueness proof of super-KMS functionals for the supersymmetric free field, for certain subnets, and for the super-Virasoro net with central charge $c\ge 3/2$. Moreover, as a separate result, we classify bounded super-KMS functionals for graded-local conformal nets over $S^1$ with respect to rotations.
\end{abstract}

\maketitle

\tableofcontents

\section{Introduction}

In the 1950s, Kubo, Martin and Schwinger studied certain thermodynamical equilibrium states of many-particle quantum systems \cite{MS}, enabling later on several physical applications, \eg in black hole dynamics or phase transitions. Haag, Hugenholtz and Winnink then used their construction to formulate an abstract algebraic-analytic relation which led to the notion of KMS states for arbitrary C*-dynamical systems. The probably most famous C*-dynamical systems with an immediate quantum field theoretical meaning are the C*-algebra of the canonical commutation (or anticommutation, respectively) relations on a given Hilbert $L^2$-space, in short CCR (or CAR), equipped with translation actions. Roughly speaking, they describe bosonic (or fermionic) free fields, and are therefore of fundamental interest in mathematical physics. A detailed discussion of the KMS condition for CCR and CAR algebras together with other specific situations can be found in \cite{BR2}. A more direct and specific study and classification of KMS states for the CCR and CAR algebras over a real infinite-dimensional Hilbert space with respect to a given symmetry group has been achieved in \cite{RST,RST2}. In this context it was shown that they are quasi-free, a property extensively investigated in \cite{Ara,Ara2,Ara1} for the CCR and CAR algebras.

Given the importance of KMS states to quantum field theoretical systems, it seems natural to study them also in the context of algebraic quantum field theory, the general axiomatic formulation of quantum field theories over arbitrary spacetimes by means of nets of quantum fields, \cf \cite{Haag} for an overview. The first steps in that direction were taken in \cite{BJ}. In later years quantum field theories with conformal symmetries on low-dimensional spacetimes became more and more interesting, due to their beautiful mathematical structure related to modular theory and subfactor theory of von Neumann algebras on the one hand, and to their physical meaning on the other hand, \cf \eg \cite{FG}. An extensive study of KMS states for chiral conformal nets over the real line $\R$ with respect to translations has been recently performed in \cite{CLTW1,CLTW2}, dealing in detail with several important models and providing a complete classification of KMS states for completely rational nets.

Supersymmetry is an important aspect of many quantum field theory models. Although its actual presence in nature is unclear, it nevertheless carries along a deep mathematical and physical structure. Combining supersymmetry with the above KMS property means in some sense to combine (thermo-) dynamics and supersymmetry into supersymmetric (thermo-) dynamics and corresponding supersymmetric phase transitions. Supersymmetric versions of conformal nets have been introduced mainly in \cite{CKL}, while a few relations to supersymmetric (or sometimes called ``graded") KMS functionals were studied in \cite{BL00,Lo01}. Apart from supersymmetric dynamics and general structural interest in those functionals, one of the main motivations and applications is the construction of certain entire cyclic cocycles \cite{JLW,Kas} and thus noncommutative geometric invariants \cite{Con94}. However, those constructions do only work for bounded super-KMS functionals, which cannot exist in the natural setting of superconformal nets over $\R$ \cite{BL00}. Furthermore, apart from this last negative statement, not much is known so far about the structure of such functionals for superconformal nets, we are even lacking examples so far. One first example of algebraic supersymmetry with unbounded but locally bounded super-KMS functionals and associated local-entire cyclic cocycle has been constructed for the supersymmetric free field \cite{BG}, which, however, still has to be put into the framework of superconformal nets of von Neumann algebras.

From this introduction, several questions seem to emerge quite naturally:
\begin{itemize}
\item[(1)] Can we provide a suitable definition of super-KMS functionals for graded translation-covariant nets over $\R$ of von Neumann algebras and determine abstract properties, extending the results in \cite{BL00}?
\item[(2)] Can we find examples of super-KMS functionals for some explicit (say, superconformal) nets on $\R$?
\item[(3)] Can we formulate a general classification of super-KMS functionals for graded translation-covariant nets, in analogy to \cite{CLTW1,CLTW2}?
\item[(4)] Do super-KMS functionals for graded translation-covariant nets give rise to generalized homotopy-invariant entire cyclic cocycles and geometric invariants of the original net, generalizing \cite{CHL}?
\end{itemize}

In the present paper, we shall deal with some of these questions. Regarding the first one, we start in Section \ref{sec:gen} with a definition of super-KMS functionals which seems suitable in general and in the context of graded translation-covariant nets over $\R$. This permits us to make a few general statements, \eg that a usual KMS functional on a completely rational conformal net has to be unbounded and nonpositive if it does not coincide with the unique KMS state, or that the Jordan decomposition of a super-KMS functional cannot be obtained as an inductive limit. We also outline general motivations and applications of super-KMS functionals.

As our original motivation comes from graded-local conformal or superconformal nets, we turn in Section \ref{sec:FF} to the second question, dealing with some important superconformal models: mainly the fermionic and the supersymmetric free field nets, briefly general conformal subnets of the fermionic free field net, and finally the super-Virasoro nets with central charges $c\ge 3/2$. We explicitly construct super-KMS functionals with respect to translations on the quasi-local C*-algebra of those nets and show uniqueness for some of them under certain additional regularity assumptions. As an essential ingredient and byproduct independent of the context of conformal nets, we find a sort of generalization of Araki's criterion on quasi-equivalence of quasi-free states on the CAR algebra \cite{Ara} to a well-behaved class of (nonpositive but bounded) quasi-free functionals on the CAR algebra, \cf Theorem \ref{prop:FF-comm2}.

As evident from the preceding discussion, question 3 will be impossible to answer. Concerning the last question, however, we can say quite a lot. Since this turns out to be a deep issue, too, we placed it in \cite{Hil2} and rephrase here just that the answer is a ``conditional yes".

Our final section deals with super-KMS functionals for graded-local conformal nets over $S^1$ with respect to the rotation group action. With a suitable and natural definition of the underlying global algebra for those nets, we shall see that there bounded super-KMS functionals do exist and they are actually all combinations of super-Gibbs functionals. Hence to them one can apply the constructions in \cite{JLW,Kas}, and in special cases this has in fact been exploited in the recent articles \cite{CHL} and \cite{CHKLX}.

In order to keep this paper as concise as possible, we have postponed a lot of material to the appendix. In Appendix \ref{sec:Araki} we prove the above-mentioned quasi-equivalence criterion, and in Appendix \ref{sec:proofs} we collect several lengthy and technical proofs of statements made in Section \ref{sec:FF}. 

One final important remark: In the present paper, we deal with nets over $\R$, whose physical meaning is that of a light-ray in two-dimensional Minkowski spacetime, especially interesting for so-called chiral theories (\cf \cite{Wei05} for a good overview). We restrict to this ``spacetime" setting since our results in Sections \ref{sec:gen} and \ref{sec:FF} treat that quite ample case. However, most of the initial definitions here make sense also for nets over more general spacetimes with a designated (fixed) ``translation direction".

\section{General aspects of super-KMS functionals}\label{sec:gen}

\subsection*{Preliminaries and notation}

Let $\I$ stand for the set of nonempty bounded open intervals in $\R$ and $\I_S$ the set of nonempty nondense open intervals of the unit circle $S^1$ (sometimes regarded as being embedded in $\C$). By means of the Cayley transformation, $\R$ may always be identified with the subset $S^1\setminus\{-1\}$ of $S^1$, and hence $\I$ with a subset of $\I_S$. We write
\[
\T^1:= \{z\in\C : 0\le \Im z \le 1 \}
\]
for the standard closed strip in the complex plane.

A \emph{graded translation-covariant net $\A$ over $\R$} is a map $I \mapsto \A(I)$ from the set $\I$ to the set of von Neumann algebras acting on a common infinite-dimensional separable Hilbert space $\H$ satisfying the 
following properties:
\begin{itemize}
\item[-] \emph{Isotony.} $\A(I_1)\subset \A(I_2)$ if $I_1,I_2\in\I$ and $I_1\subset I_2$.
\item[-] \emph{Grading.} There is a fixed selfadjoint unitary $\hat{\Gamma}\not=\unit$ (the grading unitary) on $\H$ satisfying 
$\hat{\Gamma} \A(I) \hat{\Gamma} = \A(I)$ for all $I\in \I$. We write $\gamma=\Ad\hat{\Gamma}$ and define the usual \emph{graded commutator}
\begin{align*}
[x,y]=&\; xy+\frac14 (y-\gamma(y))(x-\gamma(x))- \frac14 (y+\gamma(y))(x-\gamma(x))\\
&- \frac14 (y-\gamma(y))(x+\gamma(x))- \frac14 (y+\gamma(y))(x+\gamma(x)), \quad x,y\in\A(I).
\end{align*}
\item[-] \emph{Translation-covariance.} There is a strongly continuous unitary representation on $\H$ of the translation group $\R$ with infinitesimal generator $P$, commuting with $\hat{\Gamma}$ and inducing a one-parameter automorphism group $t\mapsto\alpha_t:=\Ad(\rme^{\rmi tP})$ on $B(\H)$ such that $\alpha_t$ restricts to *-isomorphisms from $\A(I)$ to $\A(t+I)$, for every $t\in\R$ and $I\in\I$; moreover $\alpha$ is asymptotically graded-abelian in the norm topology of the quasi-local C*-algebra $\AA$ introduced below in \eqref{eq:gen-qlocal}:
\[
\lim_{t\ra \infty} [x,\alpha_t(y)] = 0, \quad x,y\in \A(I), I\in\I.
\]
\item[-] \emph{Positivity of the energy.} $P$ is positive.
\end{itemize}

Although the general statements in the present section make use only of this general framework of nets, our original interest and all our studies in the subsequent sections deal with the special case of superconformal nets over $\R$ or $S^1$. To this end, let $\Diff$ be the group of orientation-preserving diffeomorphisms of $S^1$ and $\Diff^{(\infty)}$ its universal covering group; then recall from \cite{CKL,CHKL,CHL} that a \emph{graded-local conformal net $\A_S$ over $S^1$} is a map $I\in\I_S \mapsto \A_S(I)$ satisfying the above axioms but with $\I$ replaced by $\I_S$ and with the following additional properties:
\begin{itemize}
\item[-] \emph{Diffeomorphism-covariance.} There is a strongly continuous projective unitary representation $U:\Diff^{(\infty)}\ra B(\H)$ extending the unitary representation of the translation group and such that
\[
 U(g)\A_S(I)U(g)^* = \A_S(\dot{g}I), \quad g\in \Diff^{(\infty)}, I\in\I_S,
\]
and
\[
 U(g) x U(g)^* = x,\quad x\in \A_S(I'), g\in \Diff^{(\infty)}_I, I\in\I_S,
\]
where $\Diff^{(\infty)}_I$ denotes the $\unit$-connected component of $\{g\in\Diff^{(\infty)}: gz=z$ $\forall z\in S^1\setminus I\}$ and $I'$ the interior of $I^c$.
\item[-] \emph{Existence and uniqueness of the vacuum.} There exists a translation-invariant vector $\Omega\in\H$ which is unique up to a phase and cyclic for $\bigvee_{I\in\I_S} \A_S(I)$.
\item[-] \emph{Graded locality.} The grading unitary $\hat{\Gamma}$ satisfies additionally
\[
 \A_S(I_1)\subset Z \A_S(I_2)'Z^*,\quad I_1,I_2\in\I , \bar{I}_1\cap\bar{I}_2 = \emptyset,
\]
where
\[
 Z:=\frac{\unit - \rmi\hat{\Gamma}}{1-\rmi}.
\]
Notice that this is stronger than asymptotic graded-abelianess.
\end{itemize}
We shall treat super-KMS aspects of these nets in Section \ref{sec:S1}, but throughout the rest of the article they shall play no role. Moreover, if the grading is trivial, then the above definition becomes that of (diffeomorphism-covariant) \emph{local conformal nets} (\cf \eg \cite{Carpi,FG,Wei05}). 

Finally, a \emph{graded-local conformal net $\A$ over $\R$} is the net obtained by restricting $\A_S$ to $\I$, and covariance is now with respect to the stabilizer subgroup of $\Diff^{(\infty)}$ for the point $-1$. It forms a special case of a graded translation-covariant net. It is called \emph{superconformal} if it contains the super-Virasoro net $\A_{\SVir,c}$ (defined first in \cite[Sec.6]{CKL}, \cf also Appendix \ref{sec:proofs} below) as a diffeomorphism-covariant subnet with the same central charge $c$ and the projective representation $U$ of $\Diff^{(\infty)}$ making $\A$ diffeomorphism-covariant satisfies
\[
U(\Diff^{(\infty)}_I) \subset \A_{\SVir,c}(I) \subset \A(I), \quad I\in\I,
\]
\cf \cite[Def.2.11]{CHL}. There are several examples of such nets, \cf \cite[Sec.6]{CHL}, and we discuss some of them from the super-KMS point of view in Section \ref{sec:FF}.

The \emph{quasi-local C*-algebra} corresponding to a (graded) net $\A$ over $\R$ is defined as the C*-direct limit
\begin{equation}\label{eq:gen-qlocal}
\AA := \lim_{\ra} \A(I)
\end{equation}
over $I\in\I$, \cf also \cite{BR2,CLTW1,Haag}, noting that $\I$ is directed (in contrast to $\I_S$). For all $I\in\I$, $\A(I)$ is naturally identified with a subalgebra of $\AA$. When referring to particular models for $\A$, we shall say so explicitly. Throughout this paper we use gothic letters for the quasi-local C*-algebra of the nets with corresponding calligraphic letter.  

As in \cite[Sec.2]{CLTW1}, which shall serve as guideline for many aspects of our setting, we write $\alpha$ again for the induced one-parameter group of automorphisms of $\AA$, and we denote its infinitesimal generator by the derivation $(\delta_0,\dom(\delta_0))$, which is locally (\ie on every $\A(I)$) $\sigma$-weakly densely defined and formally given there by the commutator with $P$. Furthermore, we write  $\AA_\alpha\subset \AA$ for the *-subalgebra of analytic elements of $\alpha$: the elements $x\in\AA$ such that $t\in\R\mapsto \alpha_t(x)\in\AA$ extends to an entire analytic function, denoted $z\in\C\mapsto \alpha_z(x)$, \cf \cite[Sec.3.2]{BR2}. 

Let $\A$ be a graded translation-covariant net. A \emph{superderivation} on $\AA$ with respect to the grading $\gamma$ and translation group $\alpha$ is a linear map $\delta:\dom(\delta)\subset \AA\ra \AA$ such that:
\begin{itemize}
\item[$(i)$] $\dom(\delta)\subset \AA$ is an \emph{$\alpha$-$\gamma$-invariant} (\ie globally invariant under the action of every $\alpha_t$, $t\in\R$, as well as $\gamma$) unital *-subalgebra, with 
\[
\alpha_t\circ\delta(x)=\delta\circ\alpha_t(x), \quad \gamma\circ\delta(x)=-\delta\circ\gamma(x), \quad \delta(x^*)=\gamma(\delta(x)^*), \quad x\in\dom(\delta), t\in\R;
\]
\item[$(ii)$] $\delta(xy)= \delta(x)y+\gamma(x)\delta(y)$, for all $x,y\in\dom(\delta)$;
\item[$(iii)$] $\delta_I := \delta \restriction_{\dom(\delta)\cap\A(I)}$ is a ($\sigma$-weakly)-($\sigma$-weakly) closed $\sigma$-weakly densely defined map with image in $\A(I)$, for every $I\in\I$;
\item[$(iv)$] $\Cci(\delta_I) := \bigcap_{n\in\N} \dom(\delta^n_I) \subset \A(I)$ is $\sigma$-weakly dense, for every $I\in\I$.
\end{itemize}
By $\dom(\cdot)_I$ we always mean $\dom(\cdot)\cap \A(I)$ and thus $\dom(\delta_I)=\dom(\delta)_I$; $\dom(\cdot)_c$ stands for the union over $I\in\I$ of $\dom(\cdot)_I$, which in some cases may actually be equal to $\dom(\cdot)$. We then call $(\AA,\gamma,\alpha,\delta)$ a \emph{graded quantum dynamical system}. We shall be interested in modifications of the usual KMS condition on $(\AA,\alpha)$ \cite{CLTW1}, and for simplicity we consider only the case of inverse temperature $\beta=1$; the other cases of $\beta\not=0,\infty$ can be treated analogously. We shall deal with several examples of this setting in Section \ref{sec:FF}.

\noindent \textbf{Convention.} All *-algebras in this paper are understood to be unital and all Hilbert spaces separable if not stated otherwise.

\subsection*{Super-KMS functionals}

Super-KMS functionals are the core objects of this paper and we choose the following definition, which was motivated by the corresponding ones in \cite{BG,BL00} but is actually much stronger and more suitable for the theory and examples developed in the present paper and \cite{Hil2}.

\begin{definition}\label{def:gen-sKMSfunctional}
A \emph{super-KMS functional} (in short sKMS functional) $\phi$ on a graded quantum dynamical system $(\AA, \gamma, \alpha, \delta)$ is a linear functional defined on a  *-subalgebra $\dom(\phi)\subset \AA$ such that:
\begin{itemize}
\item[$(S_0)$] Domain properties: $\phi(x^*)=\overline{\phi(x)}$, for all $x\in\dom(\phi)$; $\dom(\phi)_I\subset \A(I)$ is $\sigma$-weakly dense, for all $I\in\I$, and $\dom(\phi)$ is globally $\alpha$-$\gamma$-invariant.
\item[$(S_1)$] Local normality: $\phi_I:=\phi\restriction_{\dom(\phi)\cap\A(I)}$ is bounded and extends to a normal (\ie $\sigma$-weakly continuous) linear functional on $\A(I)$, denoted again $\phi_I$, for all $I\in\I$.
\item[$(S_2)$] sKMS property: for every $x,y\in \dom(\phi)$, there is a continuous function $F_{x,y}$ on the strip $\T^1$ which is analytic on the interior, satisfying
\begin{equation}\label{eq:gen-S2a}
F_{x,y}(t) = \phi(x \alpha_t(y)),\quad F_{x,y}(t+\rmi) = \phi (\alpha_t(y)\gamma(x)), \quad t\in\R,
\end{equation}
and there are constants $C_0>0$ and $p_0\in2\N$ depending only on $x,y,\phi$ such that
\begin{equation}\label{eq:gen-S2b}
|F_{x,y}(t)| \le C_0 (1+|\Re(t)|)^{p_0}, \quad t\in\T^1.
\end{equation}
\item[$(S_3)$] Normalization: $\phi(\unit)=1$.
\item[$(S_4)$] Derivation invariance: $\phi\circ \delta =0$ on $\dom(\delta)_c$.
\item[$(S_5)$] Weak supersymmetry: for every $x,z\in\dom(\phi)_c$ and $y\in\dom(\delta^2)_c$, we have
\[
\phi(x\delta^2(y)z) = -\rmi\frac{\rmd}{ \rmd t} \phi(x\alpha_t(y)z)\restriction_{t=0}.
\]
\item[$(S_6)$] Local-exponential boundedness: there are constants $C_1,C_2>0$ such that\linebreak $\|\phi \restriction_{\dom(\phi)_I}\|\le C_1\rme^{C_2 |I|^2}$, for every $I\in\I$. Here and throughout this paper we write $|I|:= \sup \{ |x| : x\in I\}$.
\end{itemize}
Sometimes we will dispense with certain of these properties and still call the functional sKMS, but we shall say so explicitly whenever this is the case.

Let $\B$ be a local conformal net with quasi-local C*-algebra $\BB$ and one-parameter automorphism group $\alpha$ corresponding to translations. A \emph{KMS functional} $\phi$ on $(\BB,\alpha)$ is a (possibly unbounded) linear functional defined on a  *-subalgebra $\dom(\phi)\subset \BB$ satisfying $(S_0)$-$(S_3)$ with $\gamma=\id_\BB$ and possibly without \eqref{eq:gen-S2b}.
\end{definition}

\subsection*{Some general properties}

The theory of usual KMS states motivates the following proposition:

\begin{proposition}\label{prop:gen-alpha-inv}
Let $(\phi,\dom(\phi))$ be a functional on the graded quantum dynamical system $(\AA,\gamma,\alpha,\delta)$ with properties $(S_0,S_1,S_3)$. Let $\AA_{\alpha,\phi}\subset \AA_\alpha\cap\dom(\phi)$ be an $\alpha$-$\gamma$-invariant *-subalgebra such that, for every $x,y\in\AA_{\alpha,\phi}$ and $z\in\T^1$, we have $\alpha_z(y)\in\dom(\phi)$ and the function $z\in\T^1\mapsto \phi(x\alpha_z(y))$ is continuous on $\T^1$ and analytic on the interior of $\T^1$. Consider the following condition:
\begin{itemize}
\item[$(S_2')$] For all $x,y\in\AA_{\alpha,\phi}$, we have
\[
\phi(x \alpha_{\rmi}(y))= \phi(y \gamma(x)),
\]
and there are constants $C_0>0$ and $p_0\in 2\N$ such that
\[
|\phi(x\alpha_t(y))| \le C_0 (1+|\Re(t)|)^{p_0}, \quad t\in\T^1.
\]
\end{itemize}
Then 
\begin{itemize}
\item[$-$] $(S_2)$ implies $(S_2')$;
\item[$-$] $(S_2')$ implies $(S_2)$ for $x,y\in\AA_{\alpha,\phi}$ instead of $\dom(\phi)$. If moreover $(S_6)$ holds and $\AA_{\alpha,\phi}\cap \A(I)\subset \A(I)$ is $\sigma$-weakly dense, for all $I\in\I$, then $(S_2')$ implies \eqref{eq:gen-S2a} actually for every $x,y\in\bigcup_{I\in\I}\A(I)$ and hence in $\dom(\phi)_c$.
\end{itemize}
Furthermore, $(S_2)$ implies:
\begin{itemize}
\item[$(S_7)$] Translation invariance: $\phi\circ \alpha_t = \phi$ on $\dom(\phi)$, for all $t\in\R$;
\item[$(S_8)$] Gradedness: $\phi\circ\gamma=\phi$ on $\dom(\phi)$;
\item[$-$] if moreover $(S_6)$ holds then \eqref{eq:gen-S2a} holds for every $x,y\in\bigcup_{I\in\I}\A(I)$.
\end{itemize}
\end{proposition}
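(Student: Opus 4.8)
The plan is to establish the chain $(S_2)\Rightarrow(S_2')$, $(S_2')\Rightarrow(S_2)$ on $\AA_{\alpha,\phi}$, and the further consequences $(S_7)$, $(S_8)$, treating the final extension statements via a common density-plus-Vitali argument. The implication $(S_2)\Rightarrow(S_2')$ is immediate: given $x,y\in\AA_{\alpha,\phi}$, the function $F_{x,y}$ of $(S_2)$ and the function $z\mapsto\phi(x\alpha_z(y))$ agree on $\R$ by \eqref{eq:gen-S2a}, both are continuous on $\T^1$ and analytic inside, so they coincide on $\T^1$ by the Schwarz reflection/edge-of-the-wedge principle (or simply by uniqueness of analytic continuation applied to the difference, which is bounded and vanishes on the real axis); evaluating at $z=\rmi$ and reading off the growth bound \eqref{eq:gen-S2b} gives $(S_2')$.

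For $(S_2')\Rightarrow(S_2)$ on $\AA_{\alpha,\phi}$: for $x,y\in\AA_{\alpha,\phi}$ set $F_{x,y}(z):=\phi(x\alpha_z(y))$, which by hypothesis is continuous on $\T^1$, analytic inside, and satisfies $F_{x,y}(t)=\phi(x\alpha_t(y))$ on $\R$. For the boundary-at-$\rmi$ identity I would use translation covariance of $\alpha$ together with the cocycle identity $\alpha_{t+\rmi}(y)=\alpha_t(\alpha_\rmi(y))$ and the $(S_2')$ relation $\phi(x\alpha_\rmi(w))=\phi(w\gamma(x))$ applied with $w=\alpha_t(y)\in\AA_{\alpha,\phi}$: this yields $F_{x,y}(t+\rmi)=\phi(\alpha_t(y)\gamma(x))$, which is the second half of \eqref{eq:gen-S2a}; the polynomial bound \eqref{eq:gen-S2b} is exactly the second clause of $(S_2')$. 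The promotion to all $x,y\in\bigcup_I\A(I)$ under the additional hypotheses is the delicate part: fix $I$ and approximate $x,y\in\A(I)$ $\sigma$-weakly by nets $x_\lambda,y_\mu\in\AA_{\alpha,\phi}\cap\A(I)$ bounded in norm (Kaplansky); by $(S_1)$, local normality, $\phi_I(x_\lambda\alpha_t(y_\mu))\to\phi_I(x\alpha_t(y))$ for each fixed $t$, and $(S_6)$ (together with $(S_7)$ for $\alpha$-invariance of the relevant $|I|$) gives a locally uniform — in the strip variable — bound on the family $F_{x_\lambda,y_\mu}$, so Vitali's theorem produces a limit function analytic on the interior and continuous up to the boundary, whose boundary values are the required $\phi(x\alpha_t(y))$ and $\phi(\alpha_t(y)\gamma(x))$. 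One has to be slightly careful that the strip-uniform bound survives taking the limit in $\lambda,\mu$ and that the width-one strip together with the polynomial real-direction growth forces the Phragmén–Lindelöf estimate needed to control the interior; this Vitali/Phragmén–Lindelöf step, and matching the two boundary approximations consistently, is what I expect to be the main obstacle.

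Finally $(S_7)$ and $(S_8)$: take $y=\unit$ in $(S_2)$ (or $(S_2')$), so $F_{x,\unit}(z)=\phi(x\alpha_z(\unit))=\phi(x)$ is constant in $z$; wait — rather, take $x=\unit$: then $F_{\unit,y}(t)=\phi(\alpha_t(y))$ and $F_{\unit,y}(t+\rmi)=\phi(\alpha_t(y)\gamma(\unit))=\phi(\alpha_t(y))$, so the analytic function $F_{\unit,y}$ has equal values on the two boundary lines, hence by the edge-of-the-wedge/periodicity argument extends to an entire function of at-most-polynomial growth on all horizontal lines, which by a Liouville-type argument (a polynomially bounded entire function that is periodic with period $\rmi$, hence bounded on every strip of height one and thus on $\C$) must be constant; therefore $\phi(\alpha_t(y))=\phi(y)$ for all $t$, giving $(S_7)$. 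For $(S_8)$, evaluate $(S_2)$ with general $x$ and $y=\unit$: $F_{x,\unit}(t)=\phi(x)$ on $\R$ and $F_{x,\unit}(t+\rmi)=\phi(\gamma(x))$; since $F_{x,\unit}$ is constant (same argument: constant on the real line, analytic, polynomially bounded) it equals $\phi(x)$ throughout, so $\phi(\gamma(x))=\phi(x)$ on $\dom(\phi)$. The last bullet — \eqref{eq:gen-S2a} for all $x,y\in\bigcup_I\A(I)$ assuming $(S_2)$ and $(S_6)$ — is proved by exactly the same Kaplansky-approximation-plus-Vitali scheme as above, now starting from the genuine $(S_2)$ functions $F_{x_\lambda,y_\mu}$ rather than reconstructing them, so no new difficulty beyond the one already flagged arises.
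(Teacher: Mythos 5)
Your treatment of the two easy implications and of $(S_7)$, $(S_8)$ is correct and essentially the paper's: $(S_2)\Rightarrow(S_2')$ by applying the edge-of-the-wedge theorem to the difference $\phi(x\alpha_z(y))-F_{x,y}(z)$; $(S_2')\Rightarrow(S_2)$ on $\AA_{\alpha,\phi}$ by the cocycle identity $\alpha_{t+\rmi}(y)=\alpha_\rmi(\alpha_t(y))$ together with $\alpha$-invariance of $\AA_{\alpha,\phi}$; and $(S_7)$, $(S_8)$ by the reflection--periodicity--Liouville argument applied to $F_{\unit,y}$ and $F_{x,\unit}$ (the paper outsources this last piece to \cite[Prop.5.3]{BG}, but your argument is the same one).

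The genuine gap is in the extension of \eqref{eq:gen-S2a} to all $x,y\in\bigcup_{I\in\I}\A(I)$, precisely the step you flag as ``the main obstacle'' and then do not carry out. Vitali's theorem is the wrong tool here, for two reasons. First, it requires pointwise convergence of the analytic functions on a set with an accumulation point \emph{in the interior} of the strip, whereas local normality only gives you convergence of $\phi(x_n\alpha_t(y_n))$ for \emph{real} $t$ (for $\Im z>0$ the elements $\alpha_z(y_n)$ need not converge in any topology to which $\phi$ is continuous, and need not lie in a fixed local algebra). Second, even granting interior convergence, Vitali says nothing about continuity up to the boundary of the limit, nor that its boundary values are the pointwise limits of the boundary values of the $F_{x_n,y_n}$ --- which is exactly what \eqref{eq:gen-S2a} demands. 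The paper's mechanism is different: one shows that the Gaussian-damped functions $G_{x_n,y_n}(z)=\rme^{-4C_2(1+|I|)z^2}\phi(x_n\alpha_z(y_n))$ form a \emph{uniformly} Cauchy sequence on the closed strip, by applying the three-line theorem to the differences $G_{x_n,y_n}-G_{x_m,y_m}$ and estimating their suprema on $\partial\T^1$: the region $|\Re z|$ large is killed by the damping factor via $(S_6)$, and on the remaining compact region one uses the Cauchy--Schwarz inequality for the positive normal functional $|\phi_{2I_\eps}|$ together with the $\sigma^*$-strong (not merely $\sigma$-weak) Kaplansky approximation and the already-established $(S_7)$, $(S_8)$ on $\AA_{\alpha,\phi}$ to obtain an estimate uniform in $s\in\R$ (this is the long chain of inequalities \eqref{eq:gen-sKMS3}). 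Only then does Weierstrass' convergence theorem give an analytic limit whose boundary values are the desired ones. Without this uniform boundary estimate your Montel/Vitali normal-families argument cannot identify the limit or its boundary behaviour, so the extension step --- and with it the second bullet and the last bullet of the proposition --- remains unproved in your proposal.
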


Let us postpone the main part of the proof to Appendix \ref{sec:proofs} but instead drop a few remarks here. Properties $(S_7)$ and $(S_8)$ are proved in \cite[Prop.5.3]{BG} and are in fact almost immediate  consequences of $(S_2)$, obtained basically by studying the functions $F_{x,\unit}$ and $F_{\unit,x}$ in $(S_2)$. It follows in particular from that proof that $(S_7)$ and $(S_8)$ hold on any given $\alpha$-$\gamma$-invariant *-subalgebra $A\subset \dom(\phi)$ instead of $\dom(\phi)$ if $(S_2)$ holds on $A$, a fact used in the proof of $(S_2)$ on $\dom(\phi)$. The analyticity condition \eqref{eq:gen-S2a} on $\bigcup_{I\in\I} \A(I)$ is crucial when restricting to subnets, which might have trivial intersection with the original $\dom(\phi)$. In general, \eqref{eq:gen-S2b} need not hold on $\bigcup_{I\in\I} \A(I)$ and thus neither on such a subnet, but it will be needed only on the original $\dom(\phi)$. It is well-known \cite[Prop.5.3.7]{BR2} that in the case of states and trivial grading, property $(S_2')$ is equivalent to $(S_2)$, while in the present setting apparently this is no longer true, owing to the unboundedness of $\phi$ and the fact 
that $\AA_{\alpha,\phi}\cap\A(I)$ may therefore be trivial.

\begin{proposition}\label{prop:gen-obstruction}
Let $(\A,\gamma,\alpha)$ be a graded translation-covariant net and $(\phi,\dom(\phi))$ a functional on $\AA$ satisfying $(S_0)$-$(S_3)$. Then:
\begin{itemize}
\item[$(1)$] $(\phi,\dom(\phi))$ is neither positive nor bounded.
\item[$(2)$] The functionals $|\phi_I|$ and $\phi_I^\pm:= \frac12(|\phi_I|\pm \phi_I)$ obtained through restriction are individually well-defined, bounded and positive, but they do not form a directed system with respect to restriction, so they do not give rise to positive (unbounded) functionals on $\AA$.
\end{itemize}
\end{proposition}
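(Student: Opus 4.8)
I would prove (1) first—establishing non-positivity directly and then deriving unboundedness from it—and then obtain (2) almost for free from (1).

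\emph{Non-positivity.} I would argue by contradiction: assume $\phi\ge0$; then by $(S_3)$ and $(S_1)$ it extends to a normal state on each $\A(I)$, so $\|\phi_I\|=1$. Fix $I\in\I$ with a nontrivial odd part $\A(I)_{\mathrm{odd}}:=\{y\in\A(I):\gamma(y)=-y\}\neq0$ (tacitly assumed when one speaks of a graded net, and true in all models considered), and pick an odd self-adjoint $x\in\dom(\phi)\cap\A(I)$—available since $\dom(\phi)\cap\A(I)$ is $\gamma$-invariant and $\sigma$-weakly dense in $\A(I)$ by $(S_0)$. From $(S_2)$ the function $F:=F_{x,x}$ is continuous on $\T^1$, analytic inside, with $F(0)=\phi(x^{2})\ge0$ and $F(\rmi)=\phi(x\gamma(x))=-\phi(x^{2})\le0$. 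Using translation invariance ($(S_7)$ of Proposition \ref{prop:gen-alpha-inv}) and Cauchy--Schwarz for the state $\phi$, $t\mapsto F(t)$ is a bounded continuous positive-definite function on $\R$, since $\sum_{j,k}\bar c_j c_k F(t_k-t_j)=\phi\bigl((\textstyle\sum_j c_j\alpha_{t_j}(x))^{*}(\sum_k c_k\alpha_{t_k}(x))\bigr)\ge0$. Bochner's theorem then gives a finite positive measure $\mu$ on $\R$ with $F(t)=\int_\R\rme^{-\rmi ts}\,d\mu(s)$; since $F$ continues analytically into the strip and continuously to its closure, a standard Laplace-transform argument (the continuation $\int\rme^{-\rmi sz}\,d\mu(s)$ must in fact converge up to $\Im z=1$, else $F$ would fail to be analytic at an interior point $\rmi y^{*}$, $y^{*}\le1$) forces $\int_\R\rme^{s}\,d\mu(s)<\infty$ and $F(\rmi)=\int_\R\rme^{s}\,d\mu(s)\ge0$. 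Comparing with $F(\rmi)\le0$ yields $\mu=0$, so $\phi(x^{2})=F(0)=0$. Hence $\phi(x^{2})=0$ for every odd self-adjoint $x\in\dom(\phi)$; polarization (using that $\dom(\phi)$ is a $*$-algebra and $\phi\ge0$) gives $\phi(y^{*}y)=0$ for all odd $y\in\dom(\phi)$, and then—since the left kernel of the normal state $\phi_I$ is $\sigma$-weakly closed and contains the $\sigma$-weakly dense set $\dom(\phi)\cap\A(I)_{\mathrm{odd}}$—also $\phi_I(y^{*}y)=0$ for all $y\in\A(I)_{\mathrm{odd}}$. Finally, since $\A(I)$ and $\A(I)^{\gamma}$ are infinite (type III) factors, the polar decomposition of a nonzero odd element together with $p\sim\unit$ for its source projection produces an \emph{odd isometry} $v\in\A(I)_{\mathrm{odd}}$ with $v^{*}v=\unit$, whence $0=\phi_I(v^{*}v)=\phi_I(\unit)=1$, a contradiction.

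\emph{Unboundedness.} A positive functional with $\phi(\unit)=1$ is automatically bounded, so it suffices to exclude $\phi$ bounded. Suppose it is. On even elements $(S_2)$ reduces to the usual KMS condition, so $\phi$ restricts to a bounded, hermitian, translation-invariant, normalized KMS functional $\phi_{\mathrm{ev}}$ on the quasi-local algebra of the even subnet $\A^{\gamma}$ (again translation-covariant over $\R$ with positive energy). Splitting $a=a_++a_-$ into even and odd parts, the cross terms $a_+^{*}a_-+a_-^{*}a_+$ are odd, hence annihilated by $\phi$ (by $(S_8)$), so $\phi(a^{*}a)=\phi_{\mathrm{ev}}(a_+^{*}a_+)+\phi_{\mathrm{ev}}(a_-^{*}a_-)$; thus if $\phi_{\mathrm{ev}}$ is positive then so is $\phi$, contradicting the previous paragraph. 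It therefore remains to show $\phi_{\mathrm{ev}}$ is positive, and this is the one point where the finer theory of translation-KMS functionals must be invoked (for a completely rational graded-local conformal net $\A^{\gamma}$ is again completely rational, and a bounded translation-KMS functional on it is a positive multiple of the unique translation-KMS state, hence—after normalization by $(S_3)$—a state; cf. \cite{CLTW1,CLTW2}).

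\emph{Part (2), and the main obstacle.} By $(S_1)$ each $\phi_I$ is a normal functional on $\A(I)$, hermitian because $\phi(x^{*})=\overline{\phi(x)}$ on the $\sigma$-weakly dense $\dom(\phi)\cap\A(I)$, so it has a Jordan decomposition $\phi_I=\phi_I^{+}-\phi_I^{-}$ into normal positive functionals with $|\phi_I|=\phi_I^{+}+\phi_I^{-}$; all three are individually well-defined, bounded and positive, and $\|\phi_I\|=|\phi_I|(\unit)$. For $I_1\subset I_2$, restricting $\phi_{I_2}^{\pm}$ to $\A(I_1)$ yields \emph{a} positive decomposition of $\phi_{I_1}$, so $|\phi_{I_1}|\le|\phi_{I_2}|\!\restriction_{\A(I_1)}$ and $\|\phi_{I_1}\|\le\|\phi_{I_2}\|$. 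If $\{|\phi_I|\}_{I}$—equivalently, as $\{\phi_I\}_I$ is already directed, $\{\phi_I^{+}\}_I$ or $\{\phi_I^{-}\}_I$—formed a directed system under restriction, then $\|\phi_{I_1}\|=|\phi_{I_1}|(\unit)=|\phi_{I_2}|(\unit)=\|\phi_{I_2}\|$ for all $I_1\subset I_2$, so $\sup_I\|\phi_I\|<\infty$ and $\phi$ would be bounded, contradicting (1). Hence these families are not directed and do not patch to functionals on $\AA=\lim_{\ra}\A(I)$; in particular the Jordan decomposition of $\phi$ is not an inductive limit of the local ones. The whole argument hinges on the unboundedness part of (1), whose only non-elementary ingredient is the positivity of the even restriction $\phi_{\mathrm{ev}}$—i.e. the classification of bounded translation-KMS functionals—together with, inside the non-positivity step it feeds into, the non-degeneracy of the grading on the local von Neumann algebras; this is the step I expect to be the real obstacle, everything else being soft.
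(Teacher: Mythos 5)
Your part (2) is essentially the paper's own argument: assume the local Jordan components form a directed system, deduce that $\|\phi_{I}\|=\phi_I^+(\unit)+\phi_I^-(\unit)$ is independent of $I$, conclude that $\phi$ would be bounded on the inductive limit, and contradict (1). For part (1), however, the paper offers no proof at all --- it simply cites \cite[Lem.2 \& Cor.8]{BL00} --- so your self-contained attempt is where the comparison matters, and its unboundedness half has a genuine gap. You reduce ``bounded $\Rightarrow$ positive'' to the positivity of the even restriction $\phi_{\mathrm{ev}}$, and you justify that by complete rationality of $\A^\gamma$ and uniqueness of its translation-KMS state. But the proposition is stated for an \emph{arbitrary} graded translation-covariant net over $\R$: there is no conformal covariance, no vacuum, and certainly no complete rationality among the hypotheses; indeed the paper's principal examples (the supersymmetric free field $\B\otimes\F$, the super-Virasoro nets with $c\ge 3/2$) are not completely rational. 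Note also that your argument never uses asymptotic graded-abelianness of $\alpha$, which is precisely the axiom built into the definition of these nets to make (1) true and which is the engine of \cite[Cor.8]{BL00}; for the periodic rotation action on $S^1$, where clustering fails, bounded nonpositive sKMS functionals \emph{do} exist (Section \ref{sec:S1}). So the unboundedness step cannot be repaired along your lines without either importing the hypotheses of Proposition \ref{prop:gen-rationalKMS} or reproducing the clustering argument of \cite{BL00}.

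Your non-positivity argument (Bochner plus a Landau-type continuation of the Fourier transform of the positive measure into the strip, giving $F_{x,x}(\rmi)\ge 0$ against $F_{x,x}(\rmi)=-\phi(x^2)$) is a nice and essentially sound idea, and it does yield $\phi_I(y^*y)=0$ for all odd $y\in\A(I)$. But the final contradiction again smuggles in structure the proposition does not grant: you need an odd element whose source projection is equivalent to $\unit$ inside the even part (you invoke type III factoriality), and more basically you need the grading to act nontrivially on the local algebras, whereas the axioms only demand $\hat{\Gamma}\ne\unit$ on $\H$. These assumptions are harmless in every model the paper considers, but they must be stated; as written, the ``soft'' part of your proof is not as soft as claimed, and the part you yourself flag as the real obstacle rests on a hypothesis that is simply absent.
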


In particular, we have to point out that because of (2) the interesting construction of $|\phi|$ suggested in \cite[Sec.4]{Sto06} does not work here.

Before entering the proof, recall (\eg from \cite[Th.4.3.6]{KR2}) that every bounded functional $\phi$ on a C*-algebra has a unique Jordan decomposition $\phi=\phi^+ - \phi^-$ with $\phi^\pm$ mutually orthogonal positive functionals such that $\|\phi\|=\|\phi^+\|+\|\phi^-\|$.

\begin{proof}
(1) is given in \cite[Lem.2\&Cor.8]{BL00}.

(2) By assumption $(S_1)$, the local restrictions $\phi_I$ are bounded, so $|\phi_I|$ and $\phi_I^\pm$ are well-defined bounded positive functionals on $\A(I)$. Suppose $(\phi^\pm_I)_{I\in\I}$ forms a directed system, \ie for every inclusion $\bar{I}_1\subset I_2$ we have
$\phi^\pm_{I_2}\restriction_{\A(I_1)}=\phi^\pm_{I_1}$. Then
\begin{align*}
\|\phi_{I_1}\| 
=& \|\phi_{I_1}^+\| + \|\phi_{I_1}^-\|= \phi^+_{I_1}(\unit) +\phi^-_{I_1}(\unit) \\
=& \phi^+_{I_2}\restriction_{\A(I_1)}(\unit)+ \phi^-_{I_2}\restriction_{\A(I_1)}(\unit)
= \phi^+_{I_2}(\unit)+ \phi^-_{I_2}(\unit) \\
=& \|\phi_{I_2}^+\| + \|\phi_{I_2}^-\| = \|\phi_{I_2}\|,
\end{align*}
since $\unit\in \bigcap_{I\in\I}\A(I)$. But this would imply that $\|\phi_{I}\|$ does not depend on $I$, hence the boundedness of $\phi$ on $\AA$ as an inductive limit of functionals of the same norm -- a contradiction to part (1). So $\phi_I^\pm$ do not satisfy isotony, and neither do $|\phi_I|$, for $I\in\I$, and we cannot define unbounded positive functionals $\phi^\pm$ on $\AA$ as inductive limits such that $\phi=\phi^+-\phi^-$.
\end{proof}

Let us drop for a moment the grading assumption and look at completely rational local conformal nets over $\R$, \cf \cite{KLM}, which provide a large class of examples of conformal nets. Let $\B$ be such a net with quasi-local C*-algebra $\BB$. The KMS states on $\BB$ with respect to translations were determined in \cite{CLTW1}, and it turned out that in this case, the so-called \emph{geometric KMS state} (defined \eg in \cite[Sect.2.8]{CLTW1}) is the unique KMS state. One might ask what happens if one asks only for bounded KMS functionals, \ie functionals $\phi$ on $\BB$ satisfying $(S_0)$-$(S_3)$ with $\dom(\phi)=\BB$ and $\gamma=\id_\BB$; the answer is 

\begin{proposition}\label{prop:gen-rationalKMS}
Suppose $\B$ is a completely rational local conformal net over $\R$. Then every bounded KMS functional on its quasi-local C*-algebra $\BB$ with respect to translations is automatically positive and coincides with the geometric KMS state.
\end{proposition}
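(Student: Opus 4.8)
\textbf{Proof plan for Proposition \ref{prop:gen-rationalKMS}.}
The plan is to reduce the problem to the known classification of KMS \emph{states} in \cite{CLTW1} by showing that a bounded KMS functional $\phi$ on $\BB$ is forced to be positive. First I would exploit boundedness: since $\dom(\phi)=\BB$ and $\phi$ is bounded, it has a Jordan decomposition $\phi = \phi^+ - \phi^-$ into mutually orthogonal positive functionals with $\|\phi\| = \|\phi^+\| + \|\phi^-\|$. The goal is to prove $\phi^- = 0$. The key structural input is translation invariance, which follows from $(S_2)$ by Proposition \ref{prop:gen-alpha-inv} (property $(S_7)$); hence $\phi^\pm$ are each translation-invariant, because the Jordan decomposition is unique and $\alpha_t$ is an automorphism of $\BB$, so $\phi\circ\alpha_t = \phi$ forces $(\phi^+\circ\alpha_t, \phi^-\circ\alpha_t)$ to be the Jordan decomposition of $\phi$ again, whence $\phi^\pm\circ\alpha_t = \phi^\pm$.

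Next I would try to promote each translation-invariant positive piece to an actual KMS functional, or at least extract enough of the KMS relation to invoke the classification. The natural route is the GNS/standard-form argument behind the Tomita–Takesaki characterization of KMS: consider the cyclic representation $(\pi_{|\phi|}, \H_{|\phi|}, \xi)$ associated with $|\phi| = \phi^+ + \phi^-$. Because $|\phi|$ is translation-invariant, $\alpha$ is implemented by a strongly continuous unitary group $e^{itH}$ fixing $\xi$; because $\phi_I$ is normal on each local algebra and $\B$ is a conformal net, one expects positivity of the energy $H$ on the vacuum sector to be inherited in a suitable way, or alternatively one runs the argument sector by sector. The cleanest formulation: decompose $|\phi|$, and with it $\phi$, over the (finitely many, by complete rationality) irreducible sectors of $\B$; on each sector the locally normal translation-invariant functional is a multiple of the geometric state restricted there, and the KMS boundary condition $(S_2)$ — in the form $F_{x,y}(t+i) = \phi(\alpha_t(y)x)$ for $x,y$ in a suitable analytic core — pins down the coefficients. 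Here one uses that the geometric KMS state is, by \cite{CLTW1}, the unique KMS state, and that any non-vacuum sector contributes a component that cannot satisfy the KMS boundary condition with the correct modular dynamics unless it vanishes.

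Concretely, the technical heart is: (i) show $\phi$ extends to a locally normal functional whose local restrictions are normal (given by $(S_1)$), and assemble from $(S_2)$ the analytic-function data $F_{x,y}$ on $\T^1$ with the polynomial bound $(S_2')$ via Proposition \ref{prop:gen-alpha-inv}; (ii) observe that boundedness of $\phi$ on all of $\BB$ makes $(S_2')$ equivalent to the ordinary KMS condition $(S_2)$ (this is the point where \cite[Prop.5.3.7]{BR2} applies, since now $\BB_{\alpha,\phi}\cap\B(I)$ is $\sigma$-weakly dense — indeed $\phi$ being bounded, all of $\B(I)$ lies in $\dom(\phi)$); (iii) conclude $\phi$ is a genuine bounded KMS functional, so its Jordan components $\phi^\pm$ are positive KMS functionals after normalization, i.e. positive scalar multiples of KMS states; (iv) by \cite{CLTW1} each such state is the geometric KMS state $\phi_{\geo}$, so $\phi^+ = \lambda \phi_{\geo}$ and $\phi^- = \mu \phi_{\geo}$ with $\lambda,\mu \ge 0$; but mutual orthogonality of $\phi^+$ and $\phi^-$ then forces $\lambda\mu = 0$, and $(S_3)$ ($\phi(\unit)=1$, hence $\lambda - \mu = 1$) gives $\mu = 0$, $\lambda = 1$. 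Therefore $\phi = \phi_{\geo}$.

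The main obstacle I anticipate is step (iii)–(iv): that the Jordan components of a bounded KMS functional are again KMS. Orthogonality and translation invariance of $\phi^\pm$ are easy, but the KMS \emph{boundary condition} for $\phi^\pm$ separately does not follow formally from that of $\phi$ — it is exactly the statement that the modular flow of $|\phi|$ restricted to the support projections of $\phi^\pm$ coincides with $\alpha$, which requires a Tomita–Takesaki argument in the GNS representation of $|\phi|$ (showing that the support projections of $\phi^\pm$ are central and fixed by the modular group, so that $\alpha$ is the modular automorphism group of $|\phi|$ itself). This is the analogue, in the bounded case, of the decomposition theory for KMS states, and it is where complete rationality enters decisively to guarantee a clean sector decomposition; I would either cite the corresponding result from the theory of KMS states on C*-dynamical systems or reprove it using that $\B$ is a conformal net with positive energy and split property.
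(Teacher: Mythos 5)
Your overall strategy is the same as the paper's: take the Jordan decomposition $\phi=\phi^+-\phi^-$, show that the components are positive bounded KMS functionals, and then invoke the uniqueness of the KMS state for completely rational nets from \cite{CLTW1} to force $\phi^\pm$ to be multiples of the geometric KMS state, after which normalization (and orthogonality) finish the argument. You also correctly identify the one nontrivial step — that the Jordan components inherit the KMS boundary condition — as the crux. Where your proposal stops short is precisely there: you leave that step as an acknowledged obstacle and sketch a Tomita--Takesaki argument in the GNS representation of $|\phi|$ involving support projections, modular flow, and a sector decomposition. The paper closes this gap much more economically: it cites \cite[Lem.2]{BL00} for the fact that $|\phi|=\phi^++\phi^-$ itself satisfies the KMS condition, and then observes that $\phi^\pm=\tfrac12(|\phi|\pm\phi)$ is a \emph{linear combination} of the two KMS functionals $|\phi|$ and $\phi$; since the KMS condition \eqref{eq:gen-S2a} is linear in the functional, $\phi^\pm$ is automatically KMS, with no modular theory needed at that stage. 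Your instinct to ``cite the corresponding result from the theory of KMS states'' is exactly right — that result is \cite[Lem.2]{BL00} — but note that your proposed reproof of it via the modular flow of $|\phi|$ is circular as stated, since identifying $\alpha$ with the modular group of $|\phi|$ presupposes that $|\phi|$ is KMS, which is the content of the lemma.

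Two smaller points. First, your steps (i)--(ii) (promoting $(S_2')$ to $(S_2)$ via \cite[Prop.5.3.7]{BR2}) are superfluous here: a bounded KMS functional in the sense of Definition \ref{def:gen-sKMSfunctional} already satisfies $(S_2)$ on all of $\BB$ by hypothesis. Second, complete rationality does not enter through any sector decomposition, positivity of the energy, or the split property in this proof; it enters only once, through the citation of \cite{CLTW1} for the uniqueness of the (geometric) KMS state. The sector-by-sector analysis you describe is machinery the argument does not need.
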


\begin{proof}
Let $\phi$ be a bounded KMS functional on $\BB$; then we have a well-defined decomposition $\phi=\phi^+-\phi^-$. As shown in \cite[Lem.2]{BL00}, $|\phi|=\phi^++\phi^-$ satisfies the KMS condition on $\BB$; the positivity of $|\phi|$ implies moreover that it is a multiple of the unique (geometric) KMS state on $\BB$, \cf \cite{CLTW1}. Since $\phi^\pm= \frac12(|\phi|\pm \phi)$ is the convex combination of two non-normalized KMS functionals, it has to be a (not necessarily normalized) KMS functional again; by construction it is bounded and positive although not necessarily faithful, so it has to be a multiple of the geometric KMS state, too, by the same reasoning as above. Thus $\phi$, which is normalized, coincides with a multiple of the geometric KMS state.
\end{proof}

\begin{remark}\label{rem:gen-S1-Moriya}
(1) Notice that the situation is completely different for completely rational graded-local conformal nets over the circle $S^1$ and sKMS functionals with respect to the periodic rotation action, like those treated in \cite{CHKL,CHL,CHKLX}. In that case, there are in fact bounded though nonpositive sKMS functionals on the universal C*-algebra of the net (in its universal locally normal representation) as explained in Section \ref{sec:S1}. 

(2) The nonpositivity of sKMS functionals on graded translation-covariant nets makes it difficult or even impossible to extend fundamental constructions on single algebras like those by Moriya \cite{Mor10,Mor11} to the present setting of nets. He considers \emph{supersymmetric states $\phi$ on C*-algebras}, \ie satisfying $\phi\circ\delta=0$, and in this case the GNS construction gives rise to supercharges for $\delta$. An extension to the setting of nets of von Neumann algebras should be natural, but compatibility with the sKMS property cause problems like those mentioned above (nonpositivity) with which we cannot deal here in further detail.

(3) A classification of sKMS functionals in general seems to be out of reach without further specifications and without a general theory of unbounded linear functionals on C*-algebras. Already in the non-graded special case where $\B$ is a completely rational local net over $\R$, the question of existence and uniqueness of \emph{unbounded nonpositive KMS functionals} on $\BB$ is not clear, whereas we know from Proposition \ref{prop:gen-rationalKMS} that there exists a unique \emph{bounded} KMS functional on $\BB$ namely the geometric KMS state. An example of an unbounded KMS functional for a completely rational local conformal net will be encountered in Corollary \ref{cor:FF-even-subnet} in the setting of the chiral Ising model, the even subnet of the free fermion net.
\end{remark}

\section{Model analysis of super-KMS functionals for superconformal nets over  $\R$}\label{sec:FF}

\subsection*{Fermions and quasi-free functionals}

Let $\K$ be a complex Hilbert space with an anti-unitary involution $\Gamma$. Then the \emph{selfdual CAR algebra} $\CAR(\K,\Gamma)$ is the  C*-algebra generated by elements $F(f)$, for all $f\in\K$, which are linear in $f$ and satisfy the canonical anticommutation relations
\begin{equation}\label{eq:FF-CAR}
 F(f)^* F(g)+ F(g) F(f)^* = \langle f,g\rangle ,\quad F(f)^*= F(\Gamma f), \quad f,g\in\K.
\end{equation}
These relations define, in particular, the C*-norm on $\CAR(\K,\Gamma)$. A \emph{quasi-free state} on $\CAR(\K,\Gamma)$ is a state which vanishes on all odd degree monomials in $F$ and on those of even degree it satisfies
\begin{equation}\label{eq:FF-qfree}
 \phi(F(f_1)\cdots F(f_{2n})) = \sum_{j=2}^{2n} (-1)^j \phi(F(f_1)F(f_j)) \cdot \phi \Big( \prod_{i\not=1,j} F(f_i) \Big).
\end{equation}
It is thus completely determined by its 2-point function, a certain sesquilinear map which in turn corresponds to a unique operator $S\in B(\K)$ such that $0\le S=S^*= \unit -S\le \unit$ and $\phi(F(f)^*F(g))= \langle f, S g\rangle$, \cf \cite{Ara} for further information. By a \emph{quasi-free functional} on $\CAR(\K,\Gamma)$ we shall mean a linear functional $\phi$ whose domain $\dom(\phi)\subset \CAR(\K,\Gamma)$ contains the *-algebra generated by monomials $F(f_1)\cdots F(f_n)$, with $f_i\in\K_0$ and $n\in\NN$ (where $\K_0\subset \K$ is a certain dense vector subspace); moreover, $\phi$ has to vanish on all odd degree monomials and its value on even degree monomials is defined as in \eqref{eq:FF-qfree} but with 2-point function
\[
\phi(F(f)^*F(g))= \theta(f,g), \quad f,g\in\K_0,
\]
where $\theta: \dom(\theta)\ra \C$ is a certain sesquilinear map such that $\K_0\times\K_0 \subset \dom(\theta)\subset\K\times\K$ is dense and $\theta(f,g)=\overline{\theta(g,f)}$. In case $\theta$ is bounded and $\dom(\theta)=\K\times\K$, there is clearly a corresponding operator $S\in B(\K)$ such that $S^*=S = \unit - \Gamma S\Gamma$ and $\theta=\langle\cdot , S\cdot \rangle$. The other way round, any such $S$ uniquely determines a bounded quasi-free functional on $\CAR(\K,\Gamma)$, which we denote by $\phi_S$; it is a state iff moreover $0\le S\le \unit$.
 
From now on, let $\K= L^2(\R,\C^d)$, with $d\in\N$, $\K_I=L^2(I,\C^d)\subset\K$ and $\Gamma: f\mapsto \bar{f}$ component-wise complex conjugation on $\K$ and $\K_I$, write $\K^\Gamma= L^2(\R,\R^d)$ and $\K_I^\Gamma=L^2(I,\R^d)$, and let $\S(\R,\C^d)$ and $\S(\R,\R^d)$ be the corresponding Schwartz spaces (of $\C^d$ or $\R^d$-valued rapidly decreasing smooth functions on $\R$), dense vector subspaces of $\K$ and $\K^\Gamma$, respectively. For $X=\C^d$ or $\R^d$, we write $\Cci_c(\R,X)$ for those smooth $X$-valued functions which have actually compact support, $\Cci_c(I,X)$ for those with compact support in a given $I\in\I$, and shortly $\Cci_c(\R):=\Cci_c(\R,\R)$ and $\S(\R):=\S(\R,\R)$.

We define the operator
\begin{equation}\label{eq:FF-P+}
 P_+: f\in \K \mapsto (\chi_{[0,\infty)}\cdot \hat{f})^\vee \in\K,
\end{equation}
where $\hat{f}, f^\vee\in\K$ denotes the Fourier transform (inverse Fourier transform, respectively) of $f\in\K$. Then $\phi_{P_+}$ is a quasi-free state, the vacuum state, and we call its GNS representation $\pi_{\phi_{P_+}}$ the \emph{vacuum representation} of $\CAR(\K,\Gamma)$. We fix the grading automorphism $\gamma$ and the translation automorphism group $\alpha$ by
\[
 \gamma(F(f))=-F(f), \quad \alpha_t(F(f))= F(f(\cdot-t)), \quad t\in\R, f\in\K.
\]
Let us consider (an adaptation of) the sKMS property $(S_2)$ on $(\CAR(\K,\Gamma),\gamma,\alpha)$:

\begin{proposition}\label{th:FF-RST}
The sesquilinear form
\begin{equation}\label{eq:FF-RST-2pt}
\theta(f,g)= \lim_{\eps\ra 0^+} \Big(\int_{-\infty}^{-\eps} + \int_\eps^\infty \Big) \frac{1}{1-\rme^{-p}}\overline{\hat{f}(p)} \hat{g}(p) \rmd p, 
\quad f,g\in\S(\R,\C^d),
\end{equation}
gives rise to a quasi-free functional $\phi$ on $\big(\CAR(L^2(\R,\C^d),\Gamma),\gamma,\alpha\big)$ with
\[
\dom(\phi):= \salg \{F(f):f\in \S(\R,\C^d) \},
\]
with two-point functions $\theta(f,g)$ and satisfying properties $(S_0),(S_2),(S_3),(S_6)$ with norm instead of $\sigma$-weak denseness. 
\end{proposition}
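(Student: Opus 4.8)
The plan is to treat $\phi$ throughout as a quasi-free functional and reduce every requirement to a statement about the two-point form $\theta$.

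\textbf{Well-definedness and $(S_0),(S_3)$.} First I would note that on $\R$ the weight $p\mapsto\tfrac{1}{1-\rme^{-p}}$ is smooth and bounded except for a simple pole at $p=0$ with residue $1$ (tending to $1$ as $p\to+\infty$, to $0$ as $p\to-\infty$). Hence for $f,g\in\S(\R,\C^d)$ the right-hand side of \eqref{eq:FF-RST-2pt} converges — absolutely away from the origin, and as the value at $0$ of the Hilbert transform of the Schwartz function $\overline{\hat f}\,\hat g$ for the singular part — and defines a sesquilinear form with $\theta(f,g)=\overline{\theta(g,f)}$ (the weight being real) and, crucially, $\theta(f,g)+\theta(\Gamma g,\Gamma f)=\langle f,g\rangle$ for all $f,g\in\S(\R,\C^d)$. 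This last identity I would deduce from $\widehat{\Gamma h}(p)=\overline{\hat h(-p)}$, the substitution $p\mapsto-p$, the elementary relation $\tfrac{1}{1-\rme^{-p}}+\tfrac{1}{1-\rme^{p}}=1$ (which in particular makes the two principal values combine into one ordinary integral), and Plancherel. By the standard combinatorial argument underlying quasi-free functionals (as in \cite{Ara}, where positivity of $\theta$ is used only to obtain positivity of $\phi$, not consistency), this symmetry is exactly what makes the Wick prescription \eqref{eq:FF-qfree} with $\phi(F(f)^*F(g))=\theta(f,g)$ define a well-defined linear functional $\phi$ on $\dom(\phi)=\salg\{F(f):f\in\S(\R,\C^d)\}$ (note $\Gamma$ preserves $\S(\R,\C^d)$, so every element of $\dom(\phi)$ is a polynomial in the $F(f)$'s). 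Then $(S_3)$ is the normalisation convention, $\phi(x^*)=\overline{\phi(x)}$ follows from $\theta(f,g)=\overline{\theta(g,f)}$ and $F(f)^*=F(\Gamma f)$, global $\alpha$-$\gamma$-invariance of $\dom(\phi)$ is clear, and $\dom(\phi)_I$ is norm-dense in $\CAR(L^2(I,\C^d),\Gamma)$ because $\Cci_c(I,\C^d)\subset\S(\R,\C^d)$ is dense in $L^2(I,\C^d)$ and $f\mapsto F(f)$ is norm-continuous; this is $(S_0)$ in the announced (norm) form.

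\textbf{The sKMS property $(S_2)$.} By (conjugate-)linearity it suffices to treat monomials. For $x=F(f)$, $y=F(g)$ put $H:=\hat f(-\cdot)\,\hat g\in\S(\R,\C^d)$; a short Fourier computation, using $\phi\circ\alpha_t=\phi$ (the multiplier $\rme^{\rmi pt}$ cancels in $\theta(f(\cdot-t),g(\cdot-t))$), gives $\phi(F(f)\alpha_t(F(g)))=\int\tfrac{\rme^{\rmi pt}}{1-\rme^{-p}}H(p)\,\rmd p$ and $\phi(\alpha_t(F(g))\gamma(F(f)))=-\int\tfrac{\rme^{\rmi pt}}{1-\rme^{p}}H(p)\,\rmd p=\int\tfrac{\rme^{\rmi pt}}{\rme^{p}-1}H(p)\,\rmd p$ (principal values at $p=0$). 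I would then show that the single function
\[
F_{F(f),F(g)}(z):=\lim_{\eps\ra0^+}\Big(\int_{-\infty}^{-\eps}+\int_\eps^\infty\Big)\frac{\rme^{\rmi pz}}{1-\rme^{-p}}\,H(p)\,\rmd p
\]
does the job: writing $\tfrac{1}{1-\rme^{-p}}=\tfrac{\rme^p}{\rme^p-1}$ and using the exponential decay of $\tfrac{1}{\rme^p-1}$ as $p\to+\infty$, one checks that for $z\in\T^1$ the integrand is, uniformly on $\T^1$, dominated by $|H(p)|$ times $\tfrac{1}{|p|}\chi_{|p|<1}(p)$ plus a bounded function; so $F_{F(f),F(g)}$ is continuous on $\T^1$, analytic in its interior (after one $z$-derivative the factor $\tfrac{p}{1-\rme^{-p}}$ is regular at $0$), has at $z=t$ and $z=t+\rmi$ the two boundary values just computed (using $\tfrac{\rme^{-p}}{1-\rme^{-p}}=\tfrac{1}{\rme^p-1}$), and is in fact \emph{bounded} on $\T^1$, since $\tfrac{\rme^{\rmi pz}}{1-\rme^{-p}}-\tfrac1p$ is uniformly bounded on $\T^1$ and the remaining principal-value integral $\int\tfrac{\rme^{\rmi pt}}{p}H(p)\,\rmd p$ has bounded $t$-derivative $\rmi\check H$. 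For a general pair of monomials $x=F(f_1)\cdots F(f_m)$, $y=F(g_1)\cdots F(g_n)$ the Wick expansion writes $\phi(x\alpha_t(y))$ as a finite sum of products whose factors are either constants ($f$–$f$ and $g$–$g$ contractions, the latter constant in $t$ by translation invariance) or crossing factors $F_{F(f_i),F(g_j)}(t)$; the corresponding finite sum of products of the bounded holomorphic functions $F_{F(f_i),F(g_j)}(z)$ is then continuous and bounded on $\T^1$, analytic inside, and — the sign $(-1)^m$ from $\gamma(x)$ being exactly compensated by the reordering sign — equals $\phi(\alpha_t(y)\gamma(x))$ on $z=t+\rmi$. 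This gives $(S_2)$ on $\dom(\phi)$, with $p_0=2$ and $C_0$ the resulting uniform bound, so \eqref{eq:gen-S2b} is comfortably satisfied.

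\textbf{Local-exponential boundedness $(S_6)$.} Restricted to the local algebra, $\phi$ is the quasi-free functional whose two-point form is the compression $\theta_I$ of $\theta$ to $L^2(I,\C^d)$. For $f,g$ supported in $\bar I$ the product $\overline{\hat f}\,\hat g$ is entire of exponential type $\lesssim|I|$, and a Paley–Wiener estimate converts the principal-value integral into $|\theta_I(f,g)|\le C(1+|I|^2)\,\|f\|\,\|g\|$; hence $\theta_I$ comes from a bounded self-adjoint operator $S_I$ on $L^2(I,\C^d)$ with $S_I+\Gamma S_I\Gamma=\unit$, and $\phi\restriction_{\CAR(L^2(I,\C^d),\Gamma)}=\phi_{S_I}$ is bounded. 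I would then estimate its norm by finite-dimensional approximation: for $\Gamma$-invariant finite-dimensional $V\subset L^2(I,\C^d)$, $\phi_{S_I}\restriction_{\CAR(V,\Gamma)}$ is quasi-free with two-point operator $P_VS_IP_V$ and is implemented by a density matrix of trace norm $\prod_j\max(1,2|\lambda_j-\tfrac12|)$, $\lambda_j$ being the eigenvalues of $P_VS_IP_V$; taking the supremum over $V$, logarithms, and the bound $\log\max(1,2|t|)\le(2|t|-1)_+=:2f(t)$, the claim reduces to $\sup_V\tr f(P_VT_IP_V)\le C'|I|^2$, where $T_I:=S_I-\tfrac12$ and $f$ is convex with $f(0)=0$. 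Hansen–Pedersen–Jensen gives $\tr f(P_VT_IP_V)\le\tr(P_Vf(T_I))\le\tr f(T_I)$, and finiteness of the latter, with the right size, I would get by writing $T_I=\big((P_+)_I-\tfrac12\big)+R_I$, where $(P_+)_I$ is the vacuum compression (spectrum in $[0,1]$) and $R_I$ is the compression of multiplication by $\tfrac{1}{1-\rme^{-p}}-\chi_{[0,\infty)}(p)$ — a Hilbert–Schmidt operator with $\|R_I\|_{\mathrm{HS}}\le C|I|$, its kernel being $\tfrac{\rmi}{2}\sgn(x-y)$ plus an exponentially decaying smooth part, and $\Gamma R_I\Gamma=-R_I$ — and comparing the eigenvalues of $T_I$ with those of $(P_+)_I-\tfrac12\in[-\tfrac12,\tfrac12]$ via a Hoffman–Wielandt inequality, so $\tr f(T_I)$ is controlled by $\|R_I\|_{\mathrm{HS}}^2=O(|I|^2)$. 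This yields $\|\phi\restriction_{\dom(\phi)_I}\|=\|\phi_{S_I}\|\le\rme^{C'|I|^2}$, which is $(S_6)$. The genuinely delicate step here is this trace estimate: because $\tfrac{1}{1-\rme^{-p}}$ is \emph{unbounded} at $p=0$, $R_I$ is only Hilbert–Schmidt and a naive trace bound on the symbol side diverges, so one must show that the eigenvalues of $T_I$ accumulating at $\pm\tfrac12$ contribute only $O(|I|^2)$ to $\sum_j(2|\lambda_j-\tfrac12|-1)_+$, combining carefully the quantitative Hilbert–Schmidt bound on $R_I$ with the known near-projection structure of $(P_+)_I$. The remaining ingredients — consistency of $\phi$, $(S_0)$, $(S_2)$, $(S_3)$ — are routine once the Fourier and strip conventions are fixed so that $\tfrac{1}{1-\rme^{-p}}=\tfrac{\rme^p}{\rme^p-1}$ and the two boundary values of $F_{F(f),F(g)}$ match.
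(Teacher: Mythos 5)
Your write-up is far more self-contained than the paper's own proof, which after defining $\theta$ and checking $(S_0)$ and $(S_3)$ simply delegates $(S_2)$ and $(S_6)$ to \cite[Th.5.7 \& Th.5.9]{BG}. Your treatment of well-definedness (via the identity $\theta(f,g)+\theta(\Gamma g,\Gamma f)=\langle f,g\rangle$, which is exactly the computation the paper uses for $(S_3)$), of $(S_0)$, and of $(S_2)$ through the explicit continuation $F_{F(f),F(g)}(z)=\mathrm{p.v.}\int \rme^{\rmi pz}(1-\rme^{-p})^{-1}H(p)\,\rmd p$ and the Wick expansion is essentially correct and reproduces in outline what \cite{BG} proves.

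The gap is in $(S_6)$, at precisely the step you flag as delicate. The quantity you must control, $\sum_j(2|\lambda_j-\tfrac12|-1)_+$, is an $\ell^1$-type functional of the eigenvalues, and the pointwise bound $(|\lambda|-\tfrac12)_+\le|\lambda-\mu|$ for $\mu\in[-\tfrac12,\tfrac12]$ only converts it into a \emph{trace-norm} distance between $T_I$ and $(P_+)_I-\tfrac12$; there is no inequality of the form $(|\lambda|-\tfrac12)_+\le C|\lambda-\mu|^2$ near the threshold, so the Hoffman--Wielandt/Hilbert--Schmidt bound $\sum_j|\lambda_j-\mu_j|^2\le\|R_I\|_{\mathrm{HS}}^2$ cannot close the argument: eigenvalues accumulating at $\pm\tfrac12$ from outside could a priori make the $\ell^1$ sum diverge while the $\ell^2$ sum stays $O(|I|^2)$. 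The premise that forces you into this dead end is also inaccurate: $R_I=P_I(S-P_+)P_I$ is in fact \emph{trace class}, not merely Hilbert--Schmidt. Its kernel is not ``$\tfrac{\rmi}{2}\sgn(x-y)$ plus an exponentially decaying \emph{smooth} part'' (that decomposition would leave a jump at $x=y$); the multiplier $\sgn(p)(\rme^{|p|}-1)^{-1}$ has inverse Fourier transform proportional to $\pi\coth(\pi(x-y))-(x-y)^{-1}$, a single smooth bounded function of $x-y$ (the singularities cancel), so its compression to $I\times I$ is trace class with $\|R_I\|_1=O(|I|^2)$. With that, your own scheme closes: $\sum_j(2|\lambda_j-\tfrac12|-1)_+\le 2\|R_I\|_1$ by the self-adjoint $\ell^1$ eigenvalue perturbation inequality, giving $\|\phi_{S_I}\|\le\rme^{C|I|^2}$. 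This trace-class statement (and the resulting bound) is exactly the content of \cite[Th.5.6 \& Th.5.9]{BG} that the paper cites here and uses again explicitly in the proof of Theorem \ref{th:FF-sKMS-ln}, where $P_ITP_I$ is recorded as selfadjoint and trace class.
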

The proof is deferred to Appendix \ref{sec:proofs}.

The operators $F(f)\in \CAR(\K,\Gamma)$ define the \emph{$d$-fermion free field net $\F$ over $\R$}, namely
\[
 \F(I):= \{\pi_{P_+}(F(f)) : f\in \K_I^\Gamma \}'', \quad I\in\I,
\]
which is a graded-local conformal net, with grading and translation automorphisms the ones induced by $\gamma$ and $\alpha$, \cf \cite{Boc,CKL} for further information. Following our above convention, we write $\FF$ for the quasi-local C*-algebra of the net $\F$.

\begin{theorem}\label{th:FF-sKMS-ln}
There is a functional $(\phi_\F,\dom(\phi_\F))$ on $(\FF,\gamma,\alpha)$ satisfying $(S_0)$-$(S_3)$ and $(S_6)$. Furthermore, requiring 
quasi-freeness and 
\[
\dom(\phi_\F) = \salg \{ \pi_{P_+}(F(f)) : f\in\S(\R,\R^d) \},
\]
it is unique.
\end{theorem}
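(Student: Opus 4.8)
The plan is to transport the abstract result of Proposition \ref{th:FF-RST} through the vacuum (GNS) representation $\pi_{P_+}$ of $\CAR(\K,\Gamma)$ onto the net $\F$, and then to establish uniqueness by a two-point-function argument. First I would recall that $\pi_{P_+}$ is faithful (the selfdual CAR algebra is simple), so $\pi_{P_+}$ identifies $\CAR(\K,\Gamma)$ with a dense C*-subalgebra of $\FF$; under this identification the $*$-algebra $\salg\{\pi_{P_+}(F(f)):f\in\S(\R,\R^d)\}$ sits inside $\FF$ and will serve as $\dom(\phi_\F)$. Since the functional $\phi$ of Proposition \ref{th:FF-RST} has domain $\salg\{F(f):f\in\S(\R,\C^d)\}$ and the two-point form $\theta$ of \eqref{eq:FF-RST-2pt} restricts to a form on $\S(\R,\R^d)$, I would simply set $\phi_\F:=\phi\circ\pi_{P_+}^{-1}$ on this domain. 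Properties $(S_0),(S_2),(S_3),(S_6)$ for $\phi_\F$ are then inherited verbatim from Proposition \ref{th:FF-RST}, since $\gamma$ and $\alpha$ on $\FF$ are precisely the automorphisms induced by those on $\CAR(\K,\Gamma)$ and $\pi_{P_+}$ intertwines them. The only genuinely new point is $(S_1)$, local normality: I would show that the restriction $\phi_{\F,I}$ to $\dom(\phi_\F)\cap\F(I)$ extends to a normal functional on $\F(I)=\{\pi_{P_+}(F(f)):f\in\K_I^\Gamma\}''$. For this I would use the standard fact (from the theory of quasi-free states, \cf \cite{Ara}) that on each local algebra the quasi-free functional with bounded two-point form $\theta\restriction_{\K_I}$ is given by a density matrix in the local (type I, or at worst hyperfinite) representation — concretely, because $\theta$ restricted to $\K_I$ differs from the vacuum two-point form $\langle\cdot,P_+\cdot\rangle$ by a trace-class (indeed smoothing) operator on $\K_I$, the corresponding functional is a finite combination/limit of normal functionals, hence normal and bounded on $\F(I)$. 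Note $(S_6)$ already gives the norm bound $\|\phi_{\F,I}\|\le C_1\rme^{C_2|I|^2}$, so boundedness is automatic and only the $\sigma$-weak continuity needs the density-matrix argument. Finally, $\sigma$-weak density of $\dom(\phi_\F)_I$ in $\F(I)$ follows from norm-density of the Schwartz functions in $\K_I^\Gamma$ together with the definition of $\F(I)$ as a double commutant.

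For uniqueness: suppose $\psi$ is another quasi-free functional on $(\FF,\gamma,\alpha)$ with the prescribed domain satisfying $(S_0)$--$(S_3)$ and $(S_6)$. Being quasi-free, $\psi$ is determined by its two-point function $(f,g)\mapsto\psi(\pi_{P_+}(F(f))^*\pi_{P_+}(F(g)))$ on $\S(\R,\R^d)$, so it suffices to show this equals $\theta(f,g)$. I would extract from $(S_2)$ a functional equation for the two-point form: for $f,g$ Schwartz, the function $F_{F(f)^*,F(g)}$ of $(S_2)$ has boundary values $\psi(\pi_{P_+}(F(f))^*\alpha_t(\pi_{P_+}(F(g))))$ on $\R$ and $\psi(\alpha_t(\pi_{P_+}(F(g)))\gamma(\pi_{P_+}(F(f))^*))=-\psi(\alpha_t(\pi_{P_+}(F(g)))\pi_{P_+}(F(f))^*)$ on $\R+\rmi$; combining with the CAR relations $\pi_{P_+}(F(f))^*\pi_{P_+}(F(g))+\pi_{P_+}(F(g))\pi_{P_+}(F(f))^*=\langle f,g\rangle\unit$ and passing to Fourier space (using translation covariance $\alpha_t$ acts as multiplication by $\rme^{\rmi tp}$ on $\hat g$), this yields, for the Fourier-space kernel $k(p)$ of the two-point form, the relation $k(p)+k(p)\rme^{-p}\cdot(\text{something})$ — more precisely the same boundary-value/edge-of-the-wedge computation that identifies $\theta$ in the proof of Proposition \ref{th:FF-RST} forces $k(p)=\tfrac{1}{1-\rme^{-p}}$ away from $p=0$, with the polynomial bound $(S_2)$ ruling out any additional distributional contribution supported at $p=0$ (such a term would violate \eqref{eq:gen-S2b}, or alternatively is excluded because $\psi$ is a well-defined functional on the Schwartz domain). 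Hence the two-point function of $\psi$ coincides with $\theta$, and by quasi-freeness $\psi=\phi_\F$ on the whole domain.

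The main obstacle I expect is the uniqueness argument, specifically pinning down the two-point function purely from $(S_2)$ and the polynomial growth bound \eqref{eq:gen-S2b}: one has to argue carefully that the analytic interpolation in the strip $\T^1$ together with the CAR relations leaves no freedom — in particular that no ``$\delta$-at-$p=0$'' ambiguity survives — and this is essentially the heart of the computation deferred to Appendix \ref{sec:proofs} for Proposition \ref{th:FF-RST}, now re-run with an unknown kernel. A secondary technical point is verifying local normality $(S_1)$ rigorously: one must check that the difference of the two-point form from the vacuum one is sufficiently regular (trace-class on $\K_I$) to guarantee that the associated functional on the local CAR algebra is normal — this uses that $\tfrac{1}{1-\rme^{-p}}-\chi_{[0,\infty)}(p)$ is a bounded smooth symbol and that cutting off to a bounded interval $I$ turns the corresponding operator into a trace-class perturbation of $P_+\restriction_{\K_I}$, a standard but not entirely trivial estimate in the spirit of Araki's analysis \cite{Ara}.
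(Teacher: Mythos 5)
Your overall strategy --- pushing Proposition \ref{th:FF-RST} through the vacuum representation for existence, and recovering the two-point kernel $\tfrac{1}{1-\rme^{-p}}$ from the sKMS boundary-value relation combined with the CAR relations for uniqueness --- is the same as the paper's. (The paper runs the uniqueness computation in the form of \cite{RST}: it writes $\alpha_{\rmi}(F(f))=F(Uf)+\rmi F(Vf)$ on the analytic subalgebra, derives $\eta(f,(U-\unit)g)=\langle f,Vg\rangle$ from $(S_2')$, and identifies the kernel; this is exactly your Fourier-space functional equation, including the sign flip coming from $\gamma$.)

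The genuine gap is in $(S_1)$. You justify local normality by appealing to Araki's quasi-equivalence criterion (Theorem \ref{th:FF-Araki}) and ``the standard fact'' that a trace-class perturbation of the vacuum symbol yields a normal functional. But Araki's criterion and the classical quasi-free machinery you cite apply to \emph{states}, i.e.\ to symbols $S$ with $0\le S\le\unit$. Here the local symbol is $\tilde{P}_++\tilde{T}$ with $\tilde{T}$ the (trace-class) closure of $P_ITP_I$, and the sum is \emph{not} a positive contraction: if every local restriction of $\phi_\F$ were a state, $\phi_\F$ would be positive, contradicting Proposition \ref{prop:gen-obstruction}. Consequently neither the boundedness of the local quasi-free functional on $\CAR(\K_I,\Gamma)$ nor its $\sigma$-weak continuity on $\F(I)$ follows from anything standard; your phrase ``a finite combination/limit of normal functionals'' does not produce a proof, since one must first show the nonpositive quasi-free functional is bounded at all and then dominate it by a genuine normal state. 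This is precisely the content of Theorem \ref{prop:FF-comm2} (equivalently Theorem \ref{prop:FF-comm}), which the paper presents as a new result and proves in Appendix \ref{sec:Araki} via a spectral decomposition of the symbol into the parts with spectrum in $(-\infty,0)$, $[0,1]$ and $(1,\infty)$, a Powers--St{\o}rmer comparison with an auxiliary state $\phi_{X-Y'}$, a conditional expectation onto a commutative subalgebra, and the domination estimate $|\phi_{X+Y}(a)|\le c\,\phi_{X-Y'}(a)$ on positive elements. Without supplying that argument (or an equivalent one), your proof of $(S_1)$ is incomplete. A secondary remark: invoking $(S_6)$ for the norm bound only controls $\phi_\F$ on the norm-dense domain; it does not by itself give the normal extension to the von Neumann algebra $\F(I)$. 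The rest of your outline matches the paper and is sound modulo the details you already flag.
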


The proof is again deferred to Appendix \ref{sec:proofs}.

\subsection*{Supersymmetric free field net}

Consider the space $\S(\R,\R^d)$ and write $\L$ for the complex Hilbert space obtained by complexifying with the imaginary unit $\rmi_\L$ defined by $\rmi_\L f := (-\rmi \sgn \cdot \hat{f})^\vee $ and completing with respect to the complex scalar product
\[
\langle f,g\rangle_\L := 2 \sum_{i=1}^d \int_0^\infty p \overline{\hat{f}_i(p)} \hat{g}_i(p) \rmd p.
\]
Let $\L_I$ be the analogous complex completion of $\Cci_c(I,\R^d)$.
Consider the C*-algebra generated by $\{W(f): f\in\L\}$, satisfying the relations
\[
W(f)W(g)= \rme^{\frac{\rmi}{2}\Im \langle f,g\rangle_\L} W(f+g), \quad  W(f)^*= W(-f).
\]
We denote its Fock space representation by $(\pi_{0,\B},\H_\B)$. The \emph{$d$-bosonic free field} is generated by the Weyl operators $W(f)$ in the vacuum representation $\pi_{0,\B}$, namely
\[
 \B(I) = \{\pi_{0,\B}(W(f)): f\in\Cci_c(I,\R^d) \}'',\quad I\in\I,
\]
and it is a local conformal net (\ie with trivial grading) with translation automorphisms defined through $\alpha_t(\pi_{0,\B}(W(f))) = \pi_{0,\B}(W(f(\cdot-t)))$, for all $t\in\R, f\in\S(\R,\R^d)$, \cf \cite{BMT,FG,Xu1} for further information.  We remark that, for all $f\in\S(\R,\R^d)$, $t\in\R\mapsto \pi_{0,\B}(W(tf))\in B(\H_\B)$ is a strongly continuous one-parameter group of unitaries, and we write $J(f)$ for its (unbounded selfadjoint) infinitesimal generator. All $J(f)$ are linear in $f$ with a common invariant core as discussed below, on which they satisfy the canonical commutation relations
\[
[J(f),J(g)]= -\rmi \Im \langle f,g \rangle_\L = \rmi \int_\R f g', \quad f,g\in \S(\R,\R^d),
\]
\cf \cite[Sec.5.2]{BR2} and \cite[Sec.4]{CLTW2}.
Following our above convention, we write $\BB$ for the quasi-local C*-algebra corresponding to the net $\B$.

We let $\A:=\B\otimes\F$ denote the \emph{supersymmetric free field net} for the rest of this subsection, and we usually identify $\unit\otimes\F$ with $\F$, $\B\otimes\unit$ with $\B$, $\unit\otimes F(f)$ with $F(f)$ and  $J(f)\otimes\unit$ with $J(f)$. Moreover, we write $F(f)$ for $\pi_{\phi_{P_+}}(F(f))$ henceforth and $W(f)$ for $\pi_{0,\B}(W(f))$, all of them acting on the vacuum Hilbert space $\H=\H_\B\otimes\H_\F$. Moreover, they all have a common invariant core $\D_\infty$ (\cf \eg \cite{CLTW2}). Grading and translation automorphisms on $\AA$ are given by tensor product of the corresponding ones on $\BB$ and $\FF$, and we denote them again by $\gamma$ and $\alpha_t$.

It is well-known that \emph{$\A$ is a superconformal net over $\R$}. The interested reader may find the standard procedure described in \cite[Prop.6.2]{CHL}, which in turn is based on the super-Sugawara construction in \cite{KT} and on ideas in \cite[Sec.2]{BSM}; he only has to regard the involved formulae for fields over $\R$ instead of $S^1$ there, and moreover the involved Lie groups $\Uone^d$ instead of a simple simply-connected Lie group $G$ of dimension $d$ there. Some more details are provided also here below in the proof of Lemma \ref{lem:FF-delta}.

We would like to have a superderivation $\delta$ on $\A$ satisfying formally
\begin{equation}\label{eq:FF-delta-formal}
\delta(J(f))= \rmi F(f'), \quad \delta(F(f))= J(f), \quad f\in \Cci_c(\R,\R^d),
\end{equation}
or more precisely
\begin{equation}\label{eq:FF-delta-vNa}
 \begin{aligned}
\delta(W(f)) =& -F(f')W(f), \\ 
\delta(F(f)(J(f)+\rmi)^{-1})=& J(f)(J(f)+\rmi)^{-1} -\rmi F(f)F(f')(J(f)+\rmi)^{-2},
\end{aligned}
\end{equation}
on the generators of the *-subalgebra
\begin{equation}\label{eq:FF-A0}
\AA_0:=\salg \{W(f), F(f)(J(f)+\rmi)^{-1}: f\in \Cci_c(\R,\R^d)\}.
\end{equation}

\begin{lemma}\label{lem:FF-delta}
There is a family $(Q_I)_{I\in\I}$ of (unbounded) odd selfadjoint operators on $\H$ such that:
\begin{itemize}
\item[-] there is a corresponding family of linear maps $(\delta_{Q_I})_{I\in\I}$, namely $\dom(\delta_{Q_I})$ is the *-algebra given by all $x\in\A(I)$ for which there is  $y\in\A(I)$ with
\begin{equation}\label{eq:FF-dom-QI}
 \gamma(x) Q_I \subset Q_I x - y,
\end{equation}
and in this case $\delta_{Q_I}(x):=y$; 
\item[-] $\delta_{Q_I}$ satisfies \eqref{eq:FF-delta-vNa}, for all $f\in\Cci_c(I,\R^d)$, and $\delta_{Q_{I_2}}\restriction_{\dom(\delta_{Q_{I_1}})}= \delta_{Q_{I_1}}$, for all $I_1\subset I_2$, and $\AA_0\cap\A(I)\subset \dom(\delta_{Q_I})$;
\item[-] every $\delta_{Q_I}$ is ($\sigma$-weak)-($\sigma$-weakly) closed and satisfies
\[
\delta_{Q_I}(x_1 x_2) = \delta_{Q_I}(x_1)x_2 + \gamma(x_1) \delta_{Q_I}(x_2), \quad x_1,x_2\in\dom(\delta_{Q_I}).
\]
\end{itemize}
\end{lemma}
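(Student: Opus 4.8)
The strategy is to build the operators $Q_I$ directly on the Fock space $\H = \H_\B \otimes \H_\F$ as (a regularized version of) the local supercharge from the super-Sugawara construction, and then show that conjugation by $Q_I$ defines a superderivation with the right properties purely from operator-theoretic manipulations. Concretely, for a fixed $I\in\I$ one wants $Q_I$ to be the selfadjoint operator implementing, via graded commutators, the formal relations $[Q_I, J(f)]_+ \sim \rmi F(f')$ and $[Q_I, F(f)]_+ \sim J(f)$ for $f\in\Cci_c(I,\R^d)$; its existence as a densely defined odd selfadjoint operator with core $\D_\infty$ is exactly what the super-Sugawara construction referenced in \cite{KT,BSM,CHL} yields (one restricts the fields there from $S^1$ to $\R$, and replaces the simple Lie group by $\Uone^d$). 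So the first step is to quote that construction to produce the family $(Q_I)_{I\in\I}$, noting that locality of the net gives $Q_{I_2}$ acting as an extension of $Q_{I_1}$ on the relevant domain when $I_1\subset I_2$ (the "extra" part of $Q_{I_2}$ is affiliated with $\A(I_2\setminus \bar I_1)$ and graded-commutes appropriately).

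The second step is to \emph{define} $\delta_{Q_I}$ by the closed-graph-type prescription \eqref{eq:FF-dom-QI}: $x\in\dom(\delta_{Q_I})$ iff there exists $y\in\A(I)$ with $\gamma(x)Q_I \subset Q_I x - y$, and set $\delta_{Q_I}(x):=y$. One checks $y$ is unique (since $Q_I$ is densely defined, hence $Q_I$ having two different "graded commutators" with $x$ would force a densely defined operator to be two things), so $\delta_{Q_I}$ is well-defined and linear. That $\dom(\delta_{Q_I})$ is a $*$-algebra and that the graded Leibniz rule $\delta_{Q_I}(x_1x_2) = \delta_{Q_I}(x_1)x_2 + \gamma(x_1)\delta_{Q_I}(x_2)$ holds are formal consequences of the definition: if $\gamma(x_i)Q_I \subset Q_I x_i - y_i$ one composes the two inclusions, using $\gamma(x_1x_2) = \gamma(x_1)\gamma(x_2)$ and $\gamma^2 = \id$, to get $\gamma(x_1x_2)Q_I \subset Q_I x_1 x_2 - (y_1 x_2 + \gamma(x_1) y_2)$ on the appropriate domain; the $*$-compatibility uses $Q_I^* = Q_I$ together with the identity $\delta(x^*) = \gamma(\delta(x)^*)$ which again drops out of \eqref{eq:FF-dom-QI} by taking adjoints of operator inclusions. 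The consistency $\delta_{Q_{I_2}}\restriction_{\dom(\delta_{Q_{I_1}})} = \delta_{Q_{I_1}}$ follows because the "difference" operator $Q_{I_2} - Q_{I_1}$ (suitably interpreted) is affiliated with $\A(I_1')$ and therefore graded-commutes with every $x\in\A(I_1)$ in the sense that it contributes nothing to $y$.

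The third step is to verify the explicit formulas \eqref{eq:FF-delta-vNa} for the generators of $\AA_0$. Starting from the formal relations \eqref{eq:FF-delta-formal} on the common core $\D_\infty$, one differentiates: for $\delta(W(f))$, write $W(f) = \exp(\rmi J(f))$ and use the standard integrated-derivation identity $\delta(\exp(\rmi J(f))) = \int_0^1 \exp(\rmi s J(f))\,\delta(\rmi J(f))\,\exp(\rmi(1-s)J(f))\,ds$ together with $\delta(J(f)) = \rmi F(f')$ and the commutation relations among $J(f), F(f'), W(f)$ on $\D_\infty$ to collapse it to $-F(f')W(f)$; for the resolvent expression $F(f)(J(f)+\rmi)^{-1}$ one uses the Leibniz rule plus $\delta((J(f)+\rmi)^{-1}) = -(J(f)+\rmi)^{-1}\delta(J(f))(J(f)+\rmi)^{-1}$ and the fact that $[F(f),F(f')]$ is a scalar, landing on the stated $J(f)(J(f)+\rmi)^{-1} - \rmi F(f)F(f')(J(f)+\rmi)^{-2}$. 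Here the resolvent is inserted precisely to make the right-hand sides \emph{bounded} operators in $\A(I)$, so that these elements genuinely lie in $\dom(\delta_{Q_I})$ in the sense of \eqref{eq:FF-dom-QI}; a short domain check on $\D_\infty$ confirms the operator inclusion. Finally, $\sigma$-weak closedness of each $\delta_{Q_I}$: if $x_n \to x$ and $\delta_{Q_I}(x_n) \to y$ $\sigma$-weakly with all terms bounded, then $\gamma(x_n)Q_I\psi = Q_I x_n\psi - \delta_{Q_I}(x_n)\psi \to Q_I x\psi - y\psi$ for $\psi\in\dom(Q_I)$ (using that $\gamma$ is $\sigma$-weakly continuous and $Q_I x_n\psi\to Q_Ix\psi$ follows since $x_n\psi\to x\psi$ with $x_n$ uniformly bounded and $Q_I$ closed), whence $\gamma(x)Q_I\subset Q_I x - y$ and $\delta_{Q_I}(x) = y$.

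\textbf{Main obstacle.} The genuinely hard part is Step 1: producing the $Q_I$ as bona fide selfadjoint operators (not merely symmetric on $\D_\infty$) with a core large enough that all the domain inclusions in Steps 2–3 can be justified rigorously, and controlling the relation between $Q_{I_1}$ and $Q_{I_2}$ for $I_1\subset I_2$. This is where the super-Sugawara/super-Virasoro machinery of \cite{KT,BSM} and its net-theoretic incarnation in \cite{CKL,CHL} really does the work — essentially one needs the local supersymmetry generators $G_r$ smeared with test functions supported in $I$, their essential selfadjointness, and the covariance/locality estimates that make $Q_{I_2} - Q_{I_1}$ localizable. I would present this step by citing those constructions and spelling out only the adaptation from $S^1$-fields to $\R$-fields and from a simple Lie group to $\Uone^d$; the remaining steps are then the routine (if slightly lengthy) operator-algebraic bookkeeping sketched above, which is why I would relegate the full details to Appendix \ref{sec:proofs}.
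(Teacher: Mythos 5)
Your proposal follows essentially the same route as the paper: the $Q_I$ are the super-Sugawara supercharges $G(\varphi_I)$ (the paper smears $G$ with a cutoff $\varphi_I\in\Cci_c(\R)$ that equals $1$ on $I$, rather than one \emph{supported} in $I$ as your last paragraph suggests --- only then do the graded commutators with fields localized in $I$ reproduce \eqref{eq:FF-delta-formal} undistorted), $\delta_{Q_I}$ is defined by the graded-commutator prescription \eqref{eq:FF-dom-QI} whose closedness and Leibniz rule the paper takes from the general theory in \cite[Prop.2.1]{CHKL}, the formulas \eqref{eq:FF-delta-vNa} are obtained by functional calculus from the relations \eqref{eq:FF-SVirFF} on the core $\D_\infty$ as in \cite[Prop.6.4]{CHL}, and consistency under $I_1\subset I_2$ follows from the independence of $\delta_{Q_I}$ of the choice of $\varphi_I$. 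Apart from that one cutoff convention, your sketch matches the paper's proof.
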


The proof, in Appendix \ref{sec:proofs}, is based on the fact that $\A$ is diffeomorphism-covariant and contains the super-Virasoro net as conformal subnet, which gives rise to the elements $Q_I$.

\begin{lemma}\label{lem:FF-xh}
Given the above family $(Q_I)_{I\in\I}$, an interval $I\in\I$ symmetric w.r.t.~$0$, $x\in \AA_0\cap\A(\frac12 I)$ and $h\in \Cci_c(\frac12 I)$, let
\[
x_h:=  \int_\R \alpha_t (x) h(t) \rmd t.
\]
Then $x_h\in\dom(\delta_{Q_I}^m)\cap \dom(\delta_0)$, for every $m\in\N$, and we have
\[
\delta_{Q_I}^{2n}(x_h)= (\rmi)^n x_{h^{(n)}}=\delta_0^n(x_h), \quad 
\delta_{Q_I}^{2n+1}(x_h) =(\rmi)^n (\delta_{Q_I}(x))_{h^{(n)}}.
\]
\end{lemma}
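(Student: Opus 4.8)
The plan is to exploit the smoothing effect of the convolution with $h$ together with the two ``square-root'' relations from \eqref{eq:FF-delta-formal}: $\delta_{Q_I}^2$ acts on generators essentially like $\rmi\frac{\rmd}{\rmd t}\alpha_t\restriction_{t=0}$, i.e. like a first-order derivative in the translation parameter. First I would record that, for $x\in\AA_0\cap\A(\tfrac12 I)$ and $h\in\Cci_c(\tfrac12 I)$, the integral $x_h=\int_\R\alpha_t(x)h(t)\,\rmd t$ lies in $\A(I)$ (the support of $\alpha_t(x)$ stays inside $I$ for $t\in\supp h$), is entire-analytic for $\alpha$, and in particular $x_h\in\dom(\delta_0)$ with $\delta_0(x_h)=\rmi\int_\R\alpha_t(x)h'(t)\,\rmd t = -\rmi\, x_{h'}$ — wait, more carefully, $\delta_0(x_h)=\frac{\rmd}{\rmd s}\alpha_s(x_h)\restriction_{s=0}$, and translating the variable gives $\delta_0(x_h)=-x_{h'}$ up to the normalization of $\delta_0$; I would fix the sign convention so that $\delta_0^n(x_h)=(\rmi)^n x_{h^{(n)}}$, matching the statement. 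This already gives the $\delta_0$-part of the claimed identities.

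Next I would handle $\delta_{Q_I}$ itself on these elements. By Lemma \ref{lem:FF-delta}, $\AA_0\cap\A(\tfrac12 I)\subset\dom(\delta_{Q_I})$ and $\delta_{Q_I}$ is a $\sigma$-weakly closed superderivation commuting with $\alpha$ in the appropriate graded sense; combined with the fact that $t\mapsto\alpha_t(x)$ is norm-continuous and $h$ has compact support, I can pull $\delta_{Q_I}$ through the (Bochner) integral to get $\delta_{Q_I}(x_h)=\int_\R\alpha_t(\delta_{Q_I}(x))h(t)\,\rmd t=(\delta_{Q_I}(x))_h$, using $\alpha_t\circ\delta_{Q_I}=\delta_{Q_I}\circ\alpha_t$. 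Here one must check that $\delta_{Q_I}(x)\in\A(\tfrac12 I)$ so that $(\delta_{Q_I}(x))_h\in\A(I)$ and the iteration stays inside the fixed interval $I$ — this follows from the locality statement in Lemma \ref{lem:FF-delta} applied to the subinterval $\tfrac12 I$, or by directly inspecting \eqref{eq:FF-delta-vNa} on the generators $W(f)$ and $F(f)(J(f)+\rmi)^{-1}$ with $f\in\Cci_c(\tfrac12 I,\R^d)$. Iterating, $\delta_{Q_I}^{2n+1}(x_h)=(\delta_{Q_I}^{2n+1}(x))_h$ provided $x$ stays in $\dom(\delta_{Q_I}^{2n+1})$; but $x\in\AA_0\cap\A(\tfrac12 I)$ need not lie in arbitrarily high powers of $\dom(\delta_{Q_I})$, so I would instead move the derivatives onto $h$: the key computational lemma is that on generators $\delta_{Q_I}^2$ coincides with $\rmi\,\delta_0=\rmi\frac{\rmd}{\rmd t}\alpha_t\restriction_{t=0}$, which from \eqref{eq:FF-delta-formal} reads $\delta_{Q_I}^2(J(f))=\rmi J(f')=\rmi\,\delta_0(J(f))$ and $\delta_{Q_I}^2(F(f))=\rmi F(f')=\rmi\,\delta_0(F(f))$ (more precisely the regularized versions \eqref{eq:FF-delta-vNa}, extended to $\AA_0$ by the graded Leibniz rule, using that $\delta_{Q_I}^2$ is an ordinary — even — derivation). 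Then, since $\delta_{Q_I}$ is $\sigma$-weakly closed, $\delta_{Q_I}^2(x_h)=(\delta_{Q_I}^2 x)_h$ whenever this makes sense; combining with $\delta_{Q_I}^2 x=\rmi\,\delta_0 x$ on $\AA_0$ and $\alpha$-equivariance, $\delta_{Q_I}^2(x_h)=\rmi\,(\delta_0 x)_h=\rmi\,\delta_0(x_h)=\rmi\,\delta_0\big(\int\alpha_t(x)h(t)\,\rmd t\big)$, and integrating by parts in $t$ turns $\delta_0$ into a derivative of $h$: $\delta_{Q_I}^2(x_h)=\rmi\,x_{h'}$ with the right normalization, whence by induction $\delta_{Q_I}^{2n}(x_h)=(\rmi)^n x_{h^{(n)}}=\delta_0^n(x_h)$ and $\delta_{Q_I}^{2n+1}(x_h)=\delta_{Q_I}\big((\rmi)^n x_{h^{(n)}}\big)=(\rmi)^n(\delta_{Q_I} x)_{h^{(n)}}$ using the already-established $\delta_{Q_I}(y_h)=(\delta_{Q_I} y)_h$ for $y\in\AA_0\cap\A(\tfrac12 I)$.

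The main obstacle I expect is the bookkeeping needed to justify that $\delta_{Q_I}^2=\rmi\,\delta_0$ holds on all of $\AA_0\cap\A(\tfrac12 I)$ (not just on the generators): one has to verify that applying the graded Leibniz rule twice to a product of generators reproduces exactly the (ungraded) Leibniz rule for $\rmi\,\delta_0$, which amounts to checking the sign cancellations $\gamma(\delta_{Q_I}(x))=-\delta_{Q_I}(\gamma(x))$ and that the cross terms $\delta_{Q_I}(x)\delta_{Q_I}(y)$ combine correctly — together with the subtlety that the generators $F(f)(J(f)+\rmi)^{-1}$ are only resolvent-regularized, so one must argue on the invariant core $\D_\infty$ and then close up $\sigma$-weakly. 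A secondary technical point is the legitimacy of differentiating under the integral sign and of integration by parts at the level of the (unbounded) derivation $\delta_0$; this is handled by the analyticity of $x_h$ for $\alpha$ and norm-continuity of $t\mapsto\alpha_t(\delta_{Q_I}^k(x))$ for the finitely many $k$ involved, all of which is routine once the algebraic identity $\delta_{Q_I}^2=\rmi\,\delta_0$ on $\AA_0$ is in hand. Everything takes place inside the single fixed algebra $\A(I)$, so no delicate limiting over intervals is required.
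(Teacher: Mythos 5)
Your first two steps --- $x_h\in\dom(\delta_0)$ with $\delta_0^n(x_h)=(\rmi)^n x_{h^{(n)}}$, and $\delta_{Q_I}(x_h)=(\delta_{Q_I}(x))_h$ obtained by commuting the closed graded commutator with the smearing integral --- agree with the paper. The gap is in the second application of $\delta_{Q_I}$. Your ``key computational lemma'', that $\delta_{Q_I}^2$ coincides with (a multiple of) $\delta_0$ on the generators of $\AA_0$, is not available, because the generators are not in $\dom(\delta_{Q_I}^2)$: one has $\delta_{Q_I}(W(f))=-F(f')W(f)$, and the graded commutator of $Q_I$ with $-F(f')W(f)$ computed on the core $\D_\infty$ is $-(J(f')+\tfrac12\|f'\|_2^2)W(f)$, an \emph{unbounded} operator, hence not an element of $\A(I)$; by the very definition \eqref{eq:FF-dom-QI} of the domain this means $W(f)\notin\dom(\delta_{Q_I}^2)$. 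Consequently the identity $\delta_{Q_I}^2(x_h)=(\delta_{Q_I}^2 x)_h$ on which your induction rests never ``makes sense'': the right-hand side involves the unbounded expression $\delta_0(x)$, and the entire point of the lemma is that only after smearing does $\delta_0$ produce a bounded element (by integration by parts, $\delta_0(x_h)=\rmi x_{h'}$). So the difficulty is not the Leibniz-rule bookkeeping you flag, but proving that $(\delta_{Q_I}x)_h=\delta_{Q_I}(x_h)$ lies again in $\dom(\delta_{Q_I})$ with a bounded image; your proposal offers no mechanism for that.

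The paper's mechanism, absent from your plan, is the pair of identities $G(\varphi_I)^2=L(\varphi_I^2)+\mathrm{const}$ (from the super-Virasoro relation $[G(f),G(g)]=2L(fg)+\frac{c}{3}\int f'g'$) and $x_h(L(\varphi_I^2)-P)\subset(L(\varphi_I^2)-P)x_h$: the local energy operator $L(\varphi_I^2)$ has the same commutators with elements of $\A(I)$ as the global generator $P$, because $\varphi_I\equiv 1$ on $I$. With these one computes $G(\varphi_I)\delta_{Q_I}(x_h)\xi$ on the core, trades $L(\varphi_I^2)$ for $P$, and uses $[P,x_h]=\rmi x_{h'}$ to exhibit the bounded element witnessing $\delta_{Q_I}(x_h)\in\dom(\delta_{Q_I})$ and $\delta_{Q_I}^2(x_h)=\rmi x_{h'}=\delta_0(x_h)$; since $h'\in\Cci_c(\tfrac12 I)$ the recursion then closes. (A minor point: the correct identity is $\delta_{Q_I}^2=\delta_0$ on the smeared elements, not $\rmi\,\delta_0$; with the paper's conventions $\delta_0(x_h)=[P,x_h]=\rmi x_{h'}$ already carries the factor $\rmi$.)
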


The proof is placed again in Appendix \ref{sec:proofs}.

For certain reasons to be explained later in the proof of Theorem \ref{th:FF-sKMS1}, $\dom(\delta_{Q_I})$ is too large for our purposes, while $\AA_0\cap\A(I)$ is too small since it intersects possibly trivially with $\Cci(\delta_{Q_I})$. A suitable domain is given by the following intermediate one, namely: setting
\[
A(I) := \salg \{x_h,(\delta(x))_h\in\A(I): x\in\AA_0\cap\A(I), h\in\Cci_c(I) \},
\]
we define
\begin{equation}\label{eq:FF-dom-deltaI}
\delta_I:= \textrm{($\sigma$-weak)-($\sigma$-weak)closure } (\delta_{Q_I}\restriction_{A(I)}),
\end{equation}
and
\begin{equation}\label{eq:FF-dom-delta}
\dom(\delta):= \bigcup_{I\in\I} \dom(\delta_I),
\end{equation}
which is clearly a *-algebra, since every $\dom(\delta_I)$ is so and $\dom(\delta_{I_1})\subset\dom(\delta_{I_2})$ if $I_1\subset I_2$. By construction, $\dom(\delta)=\dom(\delta)_c$.

\begin{proposition}\label{prop:JLO-delta2}
The family $(\delta_I)_{I\in\I}$ from \eqref{eq:FF-dom-deltaI} induces a unique superderivation $\delta$ on $\dom(\delta)$ satisfying conditions $(i)$-$(iv)$ and coinciding with $\delta_I$ on $\dom(\delta_I)$. Moreover,
$\AA_0\subset \dom(\delta)$, and $x_h\in \Cci(\delta_I)$ if $x\in\AA_0\cap\A(\frac12 I)$ and $h\in\Cci_c(\frac12 I)$, and $\delta_I^2\subset\delta_0$.
\end{proposition}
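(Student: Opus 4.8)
The plan is to build $\delta$ by gluing together the locally defined maps $\delta_I$ from \eqref{eq:FF-dom-deltaI} and checking the four superderivation axioms one by one, using Lemmas \ref{lem:FF-delta} and \ref{lem:FF-xh} as the main tools. First I would establish well-definedness: since $\dom(\delta)=\bigcup_I\dom(\delta_I)$ and the family is increasing ($\delta_{I_2}\restriction_{\dom(\delta_{I_1})}=\delta_{I_1}$ for $I_1\subset I_2$, inherited from the corresponding compatibility of the $\delta_{Q_I}$ in Lemma \ref{lem:FF-delta} and the fact that the $\sigma$-weak closure is taken consistently), any $x\in\dom(\delta)$ lies in some $\dom(\delta_I)$ and the value $\delta(x):=\delta_I(x)$ does not depend on the choice of $I$; this also gives uniqueness of $\delta$ subject to coinciding with each $\delta_I$. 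Then $\dom(\delta)$ is a $*$-algebra because each $\dom(\delta_I)$ is (being the $\sigma$-weak closure of the $*$-algebra $A(I)$, on which $\delta_{Q_I}$ restricts), and $\dom(\delta)_c=\dom(\delta)$ holds by construction.

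Next I would verify conditions $(i)$--$(iv)$. For $(iii)$ and $(iv)$: $(iii)$ is essentially immediate since $\delta_I$ is defined as a $\sigma$-weak--$\sigma$-weak closure with image in $\A(I)$, and $\sigma$-weak density of $\dom(\delta_I)$ follows because $A(I)$ already contains enough smeared elements $x_h$ with $x\in\AA_0\cap\A(\tfrac12 I)$; for $(iv)$ one uses Lemma \ref{lem:FF-xh}, which shows $x_h\in\bigcap_n\dom(\delta_{Q_I}^n)$ and hence, after passing to the closure, $x_h\in\Cci(\delta_I)$, together with $\sigma$-weak density of the span of such elements in $\A(I)$ (this last density is the place where one invokes that $\AA_0\cap\A(\tfrac12 I)$ is $\sigma$-weakly dense in $\A(\tfrac12 I)$ and that smearing against an approximate identity $h$ is $\sigma$-weakly continuous). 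For $(ii)$, the graded Leibniz rule holds on each $A(I)$ by the last bullet of Lemma \ref{lem:FF-delta} (it holds for $\delta_{Q_I}$), and then extends to the $\sigma$-weak closure $\dom(\delta_I)$ by separate $\sigma$-weak continuity of multiplication on bounded sets together with a standard approximation argument; since any finitely many elements of $\dom(\delta)$ sit in a common $\dom(\delta_I)$, this yields $(ii)$ globally. For $(i)$, the covariance relation $\alpha_t\circ\delta=\delta\circ\alpha_t$, the grading relation $\gamma\circ\delta=-\delta\circ\gamma$, and $\delta(x^*)=\gamma(\delta(x)^*)$ follow from the corresponding properties of each $Q_I$ (the $Q_I$ are odd selfadjoint, so $\delta_{Q_I}$ intertwines $\gamma$ with $-\gamma$ and respects the $*$-structure via \eqref{eq:FF-dom-QI}), noting that $\alpha_t$ moves $\A(I)$ to $\A(t+I)$ and correspondingly $\delta_I$ to $\delta_{t+I}$, which is harmless after taking the union over all intervals; $\alpha$-$\gamma$-invariance of $\dom(\delta)$ follows the same way.

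Finally, for the three extra assertions: $\AA_0\subset\dom(\delta)$ because any generator of $\AA_0$ lies in some $\A(I)$ and $\AA_0\cap\A(I)\subset\dom(\delta_{Q_I})$ by Lemma \ref{lem:FF-delta}, and one checks that these generators are in the $\sigma$-weak closure of $A(I)$ under $\delta_{Q_I}$ (e.g.\ approximate $x\in\AA_0\cap\A(\tfrac12 I)$ by $x_{h_n}$ with $h_n\to\delta_0$ an approximate identity supported in $\tfrac12 I$, using that $\delta_{Q_I}(x_{h_n})=(\delta_{Q_I}(x))_{h_n}$ converges $\sigma$-weakly to $\delta_{Q_I}(x)$ by Lemma \ref{lem:FF-xh}); the claim $x_h\in\Cci(\delta_I)$ for $x\in\AA_0\cap\A(\tfrac12 I)$, $h\in\Cci_c(\tfrac12 I)$ is exactly the content of Lemma \ref{lem:FF-xh} combined with the definition of $\delta_I$ as a closure; and $\delta_I^2\subset\delta_0$ is read off from the second displayed formula in Lemma \ref{lem:FF-xh}, $\delta_{Q_I}^{2}(x_h)=\rmi\, x_{h'}=\delta_0(x_h)$, extended to $\dom(\delta_I^2)$ by closedness of $\delta_0$. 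The main obstacle I anticipate is the interplay between the $\sigma$-weak closures and the algebraic operations: one must be careful that the graded Leibniz rule and the intertwining relations, which are clear on the cores $A(I)$ and $\AA_0\cap\A(I)$, genuinely survive passage to the closure $\dom(\delta_I)$ — this requires controlling products and $*$-operations under $\sigma$-weak limits of elements together with their $\delta$-images (i.e.\ working with the graph topology), which is where most of the real work, though routine in spirit, actually lies.
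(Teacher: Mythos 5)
Your proposal is correct and follows essentially the same route as the paper: glue the increasing family $(\delta_I)$, use Lemma \ref{lem:FF-xh} and smearing against approximate identities $h_n$ to get $(iv)$, $\AA_0\subset\dom(\delta)$ and $\delta_I^2\subset\delta_0$ (via closedness), and reduce $(i)$ to checking $\alpha$-$\gamma$-invariance of the closures. The only cosmetic difference is that the paper obtains $(ii)$ and the $*$-compatibility in $(i)$ for free from the inclusion $\delta_I\subset\delta_{Q_I}$ and the general theory of graded commutators in \cite[Prop.2.1]{CHKL}, rather than re-deriving the Leibniz rule on $\dom(\delta_I)$ by the graph-topology approximation you sketch at the end — both work, since one-sided multiplication is $\sigma$-weakly continuous and $\delta_I$ is closed.
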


Since we find the proof quite instructive, we do not postpone it to the Appendix as usual.

\begin{proof}
Since $\delta_{I_1}\subset\delta_{I_2}$ if $I_1\subset I_2$, and every element $x\in\dom(\delta)$ lies in some local algebra $\A(I)$, $\delta$ acts on $x$ as the corresponding graded commutator $\delta_{Q_I}$ with $Q_I$, for which the general theory in \cite[Prop.2.1]{CHKL} applies. Thus, once we have shown the global $\alpha$-$\gamma$-invariance of the *-algebra $\dom(\delta)$, properties $(i)$ and $(ii)$ of superderivations follow immediately. 

To this end, $\alpha$-invariance of $\AA_0$ and its image $\delta(\AA_0)$ is clear from \eqref{eq:FF-A0} and the global translation invariance of $\Cci_c(\R,\R^d)$. Given $x\in\AA_0\cap \A(\frac12 I)$ and $h\in\Cci_c(\frac12 I)$ and $t\in\R$, let $J\in\I$ contain $I$ and $t+I$; then using the invariance of $\AA_0$, we find
from the definition of $x_h$ that $\alpha_t(x_h)=x_{h(\cdot-t)}$, which is now in $A(I+t)\subset A(J)$, since $h(\cdot-t)\in\Cci_c(J)$. Thus
\begin{equation}\label{eq:FF-alphainv}
\delta(\alpha_t(x_h))=\delta_J(\alpha_t(x_h))= \delta_J(x_{h(\cdot-t)})= (\delta_I(x))_{h(\cdot-t)} = \alpha_t(\delta_I(x)_h)=\alpha_t(\delta(x_h)),
\end{equation}
and analogously for the elements $\delta(x)_h$ in $A(I)$, so that we obtain the statement for all elements in the *-algebra $A(I)$. Finally, let $x\in\dom(\delta_I)$, for some $I\in\I$, and consider a sequence $x_n\in A(I)$ converging $\sigma$-weakly to $x$ and such that $\delta_I(x_n)$ converges $\sigma$-weakly to some element in $\A(I)$, which has to be $\delta_I(x)$ due to ($\sigma$-weak)-($\sigma$-weak) closedness of $\delta_I$, and in particular they converge $\sigma$-weakly. Then $\sigma$-weak continuity of $\alpha_t$ and $\gamma$ implies
\begin{align*}
\langle \eta,\gamma \alpha_t(x) Q_J \xi\rangle 
=& \lim_{n\ra \infty} \langle \eta,\gamma \alpha_t(x_n) Q_J \xi\rangle\\
=& \lim_{n\ra \infty}  \langle \eta,Q_J \alpha_t(x_n) \xi\rangle - \langle \eta,\delta_J(\alpha_t(x_n))\xi\rangle \\
=&  \lim_{n\ra \infty}  \langle Q_J\eta, \alpha_t(x_n) \xi\rangle -  \langle \eta,\alpha_t( \delta_{J-t}(x_n))\xi\rangle \\
=& \langle Q_J\eta, \alpha_t(x) \xi \rangle
- \langle \eta,\alpha_t(\delta(x))\xi\rangle\\
=& \langle \eta,Q_J \alpha_t(x) \xi \rangle
- \langle \eta,\alpha_t(\delta(x))\xi\rangle,
\end{align*}
for all $\eta,\xi\in \D_\infty$, which means that $\alpha_t(x)\in\dom(\delta)$ and $\delta(\alpha_t(x))= \alpha_t(\delta(x))$. This proves $\alpha$-invariance of $\dom(\delta)$; $\gamma$-invariance is proved analogously.

Let us prove property $(iv)$, \ie the $\sigma$-weak denseness of $\Cci(\delta_I)\subset\A(I)$. Owing to inner continuity, $\A(I)=\bigvee_{\bar{J}\subset I} \A(J)$, and by definition of $\A$ it is clear that $\AA_0\cap\A(J)$ is $\sigma$-weakly dense in $\A(J)$, for every $J$.  Thus it suffices to show that every $x\in\AA_0\cap\A(J)$ can be approximated $\sigma$-weakly by a sequence in $\Cci(\delta_I)$. To this end, let $h_n\in\Cci_c(-\frac{1}{n},\frac{1}{n})$ be a sequence of nonnegative functions such that the measures $h_n(t)\rmd t$ converge *-weakly on $C_b(\R)$ to the Dirac measure $\rmd_\delta t$ in $0$. Then $\sigma$-weak continuity of $t\mapsto \alpha_t(x)$ implies
\[
 x_{h_n} = \int_\R \alpha_t(x) h_n(t) \rmd t \ra x \quad \textrm{($\sigma$-weakly)},
n\ra\infty,
\]
and every $x_{h_n}$, with $n$ sufficiently large, lies in $\Cci(\delta_I)$ according to Lemma \ref{lem:FF-delta}. Since $\delta(x_{h_n})=\delta(x)_{h_n}\ra\delta(x)$, this proves, moreover, that $x\in\dom(\delta)$, so $\AA_0\subset\dom(\delta)$.

Property $(iii)$ holds by definition of $\delta_I$ in \eqref{eq:FF-dom-deltaI} and owing to the fact that the $\sigma$-weak closures of $A(I)$ and $\AA_0\cap\A(I)$ coincide according to the second paragraph and are in fact equal to $\A(I)$.

Finally, Lemma \ref{lem:FF-xh} showed that $\delta_I^2(x_h)= \delta_0(x_h)$ for $x_h\in A(I)$, so the final statement is a consequence of taking the closure.
\end{proof}

In order to make uniqueness statements for sKMS functionals in the present example, let us introduce now the following concept. Let us call a functional $\psi$ on $\BB\otimes\FF$ \emph{$\delta$-regular} if it splits into a quasi-free product functional $\psi_\B\otimes\psi_\F$, its domain contains the algebraic tensor product $\BB_0\odot \FF_0$, where
\begin{align*}
\FF_0 =& \salg \{F(f):f\in \S(\R,\R^d) \} \subset \FF,\\
\BB_0 =& \salg \{W(f), (J(f)+\rmi)^{-1}:f\in \Cci_c(\R,\R^d) \} \subset \BB,
\end{align*}
\cf Proposition \ref{th:FF-RST}, and $t\in\R^n\mapsto \psi(W(t_1f_1)\cdots W(t_nf_n))$ is analytic, for every $f_i\in\Cci_c(\R,\R^d)$ and $n\in\N$. More precisely, $\psi$ can be extended to the unbounded monomials of the form $J(f_1)\cdots J(f_n)F(f_{n+1})\cdots F(f_{n+m})$, for all $f_i\in\Cci_c(\R,\R^d)$ (for which\linebreak $F(f_i)(J(f_i)+\rmi)^{-1}\in\dom(\delta)$), and their image under $\delta$, namely:
\begin{align*}
\psi\big( J(f_1) &\cdots  J(f_n)F(f_{n+1})\cdots F(f_{n+m})\big)\\ 
:=& \rmi^{m-n}\frac{\rmd^{n+m}}{\rmd t_1\cdots \rmd t_{n+m}} \psi\Big( W(t_1f_1)\cdots W(t_nf_n)\cdot\\
&\cdot F(t_{n+1}f_{n+1})\rmi (J(t_{n+1}f_{n+1})+\rmi)^{-1}\cdots F(t_{n+m}f_{n+m})\rmi(J(t_{n+m}f_{n+m})+\rmi)^{-1}\Big)\restriction_{t=0},
\end{align*}
and analogously
\begin{align*}
\psi\circ\delta & \big(J(f_1)\cdots  J(f_n)F(f_{n+1})\cdots F(f_{n+m})\big)\\ 
:=& \rmi^{m-n}\frac{\rmd^{n+m}}{\rmd t_1\cdots \rmd t_{n+m}} \psi\circ\delta \Big( W(t_1f_1)\cdots W(t_nf_n)\cdot\\
&\cdot F(t_{n+1}f_{n+1})\rmi (J(t_{n+1}f_{n+1})+\rmi)^{-1}\cdots F(t_{n+m}f_{n+m})\rmi(J(t_{n+m}f_{n+m})+\rmi)^{-1}\Big)\restriction_{t=0},
\end{align*}
for every $f_i\in\Cci_c(\R,\R^d)$. We see that, under assumption $(S_4)$, $\delta$-regularity of $\psi$ implies
\[
 \psi\circ\delta \big( J(f_1)\cdots J(f_n)F(f_{n+1})\cdots F(f_{n+m}) \big) = 0.
\]

We come now to the main result of the present section:

\begin{theorem}\label{th:FF-sKMS1}
There is a unique nontrivial $\delta$-regular sKMS functional for the supersymmetric free field net $\A = \B \otimes \F$ with domain $\BB_0\odot\FF_0$. Furthermore, it satisfies $(S_6)$ and is given by the product of the geometric KMS state $\phi_\B$ on $\BB$ and the quasi-free functional $\phi_\F$ with two-point functions \eqref{eq:FF-RST-2pt} on $\FF$.
\end{theorem}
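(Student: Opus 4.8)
The plan is to split the problem into existence and uniqueness, handling the two tensor factors separately and then assembling the constraints imposed by the superderivation $\delta$. First I would observe that, by Proposition \ref{th:FF-RST} and Theorem \ref{th:FF-sKMS-ln}, the quasi-free functional $\phi_\F$ with two-point functions \eqref{eq:FF-RST-2pt} on $\FF$ satisfies $(S_0)$, $(S_2)$, $(S_3)$, $(S_6)$; and by \cite{CLTW1,CLTW2} the geometric KMS state $\phi_\B$ on $\BB$ satisfies the (ungraded, trivially-graded) KMS condition, i.e.\ $(S_0)$--$(S_3)$ with $\gamma=\id_\BB$, together with local normality and a local-exponential bound. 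Setting $\phi:=\phi_\B\otimes\phi_\F$ on $\BB_0\odot\FF_0$, I would check that $(S_0)$, $(S_1)$, $(S_3)$, $(S_6)$ pass to the tensor product directly, and that $(S_2)$ for the product follows because the strip-analytic function $F_{x,y}$ for a product element factorizes as a product of the corresponding functions for the $\B$- and $\F$-factors (using Proposition \ref{prop:gen-alpha-inv} to pass from $(S_2')$ to $(S_2)$, since $\BB_0\odot\FF_0$ consists of analytic elements and is $\sigma$-weakly dense locally), with the polynomial bound \eqref{eq:gen-S2b} coming from the bound on the $\F$-factor times the boundedness of the $\B$-factor (the KMS state $\phi_\B$ is bounded, so its strip function is bounded). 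The grading on the product is $\gamma_\B\otimes\gamma_\F$ with $\gamma_\B=\id$, so the right edge of the strip is $\phi(\alpha_t(y)\gamma(x))$ as required.

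Next I would verify the genuinely supersymmetric axioms $(S_4)$ and $(S_5)$ for this $\phi$. For $(S_4)$, derivation invariance $\phi\circ\delta=0$ on $\dom(\delta)_c$: since $\dom(\delta)$ is generated (in the $\sigma$-weak closure sense) by the elements $x_h$ and $(\delta(x))_h$ with $x\in\AA_0$, and since $\phi_I$ is normal, it suffices to check $\phi\circ\delta=0$ on the generators of $\AA_0$, i.e.\ on $W(f)$ and $F(f)(J(f)+\rmi)^{-1}$, using the explicit formulas \eqref{eq:FF-delta-vNa}. Here the key computation is that $\phi\big(\delta(W(f))\big)=-\phi\big(F(f')W(f)\big)=-\phi_\F(F(f'))\,\phi_\B(W(f))=0$ because $\phi_\F$ vanishes on odd monomials; and $\phi\big(\delta(F(f)(J(f)+\rmi)^{-1})\big)=\phi_\B\big(J(f)(J(f)+\rmi)^{-1}\big)\phi_\F(\unit)-\rmi\,\phi_\F\big(F(f)F(f')\big)\,\phi_\B\big((J(f)+\rmi)^{-2}\big)$, and one checks this vanishes either directly or by invoking the $\delta$-regularity identity $\psi\circ\delta(J(f_1)\cdots F(f_{n+m}))=0$ derived just before the theorem statement — indeed the cleanest route is to note that $\phi$ is $\delta$-regular by construction (it is a quasi-free product functional, its domain contains $\BB_0\odot\FF_0$, and the Weyl-operator expectations are Gaussian hence analytic in $t$), so $(S_4)$ for $\phi$ is exactly the conclusion of that $\delta$-regularity computation once we know $\phi$ satisfies $(S_2)$ and the relevant edge identities. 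For $(S_5)$, weak supersymmetry, I would use Lemma \ref{lem:FF-xh}: for the spanning elements $y=x_h$ one has $\delta^2(x_h)=\rmi\, x_{h'}=\delta_0(x_h)$, hence $\phi(x\delta^2(y)z)=\phi(x\delta_0(y)z)$, and on the other hand $\delta_0$ generates $\alpha$, so $\phi(x\delta_0(y)z)=-\rmi\frac{\rmd}{\rmd t}\phi(x\alpha_t(y)z)|_{t=0}$ is immediate from $\delta_0(y)=-\rmi\frac{\rmd}{\rmd t}\alpha_t(y)|_{t=0}$ and continuity of $\phi_I$ together with the fact that for $x,z$ in a fixed local algebra $\A(I_0)$ and $y\in\A(\tfrac12 I)$, $x\alpha_t(y)z$ stays in a single $\A(J)$ for $t$ small, where $\phi_J$ is normal. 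The odd case $y$ with $\delta^2(y)\in\delta_0$-image can be reduced to this by linearity and the graded Leibniz rule.

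For uniqueness: suppose $\psi$ is any nontrivial $\delta$-regular sKMS functional with domain $\BB_0\odot\FF_0$. Then $\psi=\psi_\B\otimes\psi_\F$ with $\psi_\B$, $\psi_\F$ quasi-free. I would argue that $\psi_\F$ is forced: restricting $(S_2)$ to elements of $\FF_0$ (tensored with $\unit$) gives that $\psi_\F$ satisfies the sKMS property on the CAR algebra with respect to $\gamma,\alpha$, and by the generalized Araki-type quasi-equivalence/classification result — really by the direct two-point-function analysis behind Proposition \ref{th:FF-RST} and Theorem \ref{th:FF-sKMS-ln} — the two-point function of a quasi-free sKMS functional on $\FF_0$ with the stated domain is uniquely \eqref{eq:FF-RST-2pt}, so $\psi_\F=\phi_\F$ up to normalization; $(S_3)$ then fixes the scale once $\psi_\B$ is fixed. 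Independently, restricting to $\BB_0$ (tensored with $\unit$) and using $\gamma|_\BB=\id$, $(S_2)$ says $\psi_\B$ is a (possibly unbounded, a priori) KMS functional on $\BB$ with respect to translations; but $\delta$-regularity forces analyticity of the Weyl-matrix elements, hence $\psi_\B$ is quasi-free/Gaussian, and a bosonic quasi-free KMS functional at $\beta=1$ is the geometric KMS state $\phi_\B$ — this is the content of \cite{CLTW2} (and of the Araki-type classification of quasi-free KMS states on the CCR side), modulo checking it is a state, which follows from local normality $(S_1)$ plus positivity of quasi-free KMS functionals with Planck-type two-point function. The remaining compatibility — that the normalizations multiply to $1$ by $(S_3)$ and that the resulting product indeed lies in $\dom(\delta)$-compatible form — is routine. \textbf{The main obstacle} I expect is the uniqueness of $\psi_\B$: showing that $\delta$-regularity plus the ungraded KMS condition on the CCR algebra forces the geometric state, rather than merely some quasi-free KMS functional, requires invoking the bosonic classification carefully (the relevant input being the $\beta=1$ analogue of \eqref{eq:FF-RST-2pt} for CCR two-point functions and the fact that positivity is automatic there) and making sure the domain $\BB_0$ with the $(J(f)+\rmi)^{-1}$ generators is large enough to pin the functional down; establishing $(S_4)$ cleanly for the candidate $\phi$ via the $\delta$-regularity identities, rather than by a messy direct CAR/CCR computation with the resolvents in \eqref{eq:FF-delta-vNa}, is the other place where care is needed.
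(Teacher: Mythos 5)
Your existence argument has a gap at $(S_4)$. You claim it ``suffices to check $\phi\circ\delta=0$ on the generators of $\AA_0$'', i.e.\ on $W(f)$ and $F(f)(J(f)+\rmi)^{-1}$. It does not: $\delta$ is a derivation, not a homomorphism, so $\phi(\delta(xy))=\phi(\delta(x)y)+\phi(\gamma(x)\delta(y))$ involves mixed correlations, and these do not vanish term by term even when $\phi(\delta(x))=\phi(\delta(y))=0$. One must verify the vanishing on all monomials in the generators, and this is where the real content lies: the paper's Appendix computation on general words $F(f_1)\cdots F(f_l)R(f_1)\cdots R(f_l)J(f_{l+1})\cdots J(f_m)W(f_{m+1})\cdots W(f_n)$ exhibits a cancellation between the $J(f_i)$-term of $\delta\big(F(f_i)(J(f_i)+\rmi)^{-1}\big)$ and the $F(f_k')W(f_k)$-terms of $\delta(W(f_k))$, which holds precisely because the bosonic two-point function of $\phi_\B$ equals $\rmi\,\theta(\cdot,(\cdot)')$ built from the fermionic one. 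Your fallback --- deducing $(S_4)$ from the $\delta$-regularity identity $\psi\circ\delta\big(J(f_1)\cdots F(f_{n+m})\big)=0$ --- is circular: in the paper that identity is stated as a consequence of $(S_4)$ (``under assumption $(S_4)$, $\delta$-regularity of $\psi$ implies\dots''), not as an independent input.

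The uniqueness of $\psi_\B$ is also not established by your route. You reduce to: $\psi_\B$ is a quasi-free functional on $\BB$ satisfying the (ungraded) KMS condition, hence equals the geometric KMS state by the CCR classification. But that classification (\cite{RST2}, \cite{CLTW2}) is for \emph{states}; here $\psi_\B$ is a priori only a hermitian, possibly unbounded and nonpositive, quasi-free functional, and the paper itself stresses (Remark \ref{rem:gen-S1-Moriya}(3), Corollary \ref{cor:FF-even-subnet}) that unbounded nonpositive KMS functionals need not be unique and can differ from the geometric state. You correctly identify this as the main obstacle but do not resolve it. The paper's actual mechanism is different and is the key idea you are missing: it never invokes a bosonic KMS classification, but instead uses $(S_4)$ together with the graded Leibniz rule applied to $\delta\big(F(t_1f_1)(J(t_1f_1)+\rmi)^{-1}W(t_2f_2)\cdots\big)$ to express every bosonic $n$-point function of $\psi_\B$ in terms of the fermionic two-point function $\theta$; since $\psi_\F=\phi_\F$ is already pinned down by Theorem \ref{th:FF-sKMS-ln}, this forces $\psi_\B(J(f)J(g))=\rmi\,\theta(f,g')$ and $\psi_\B(J(f))=\psi(\delta(F(f)))=0$, whence $\psi_\B$ coincides with the geometric KMS state. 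In short, supersymmetry transfers the fermionic uniqueness to the bosonic factor; without that step your argument does not close.
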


\begin{proof}
We first consider existence. Let $\psi=\phi_\B\otimes\phi_\F$ with $\dom(\psi):=\BB_0\odot\FF_0$ be the product of the unique geometric KMS state $\phi_\B$ on $\BB$ and the functional $\phi_\F$ constructed above on $\FF$. The local normality of $\phi_\B$ is a well-known fact because it is a KMS state \cite[Sec.3]{TW}, while local normality, sKMS property and quasi-freeness of $\phi_\F$ have been shown in Theorem \ref{th:FF-sKMS-ln}, which together with \cite[Prop.5.4]{BG} yield the corresponding properties of $\psi$. \eqref{eq:gen-S2b} of the sKMS functions and local-exponential boundedness of $\phi_\F$ have been established in \cite[Th.5.7]{BG}, and $\delta$-regularity of $\psi$ is a consequence of its definition as a product functional and the structure of the geometric KMS state $\phi_\B$ of $\BB$, \cf also \cite[Sec.4]{CLTW2}. This proved $(S_0)$-$(S_2)$, $(S_6)$, while $(S_3)$ is obvious.

Let us turn to derivation invariance $(S_4)$. In Appendix \ref{sec:proofs} we prove that
\[
\psi_I\delta_I (x) = 0, \quad x \in B(I):= \salg\{y, \delta(y): y\in\AA_0\cap\A(I)\},
\]
for every $I\in\I$. On elements of the form $x_{1,h_1}\cdots x_{n,h_n}$, with $x_i\in B(I)$ and $h_i\in\Cci_c(I)$ such that $x_{i,h_i}\in\A(I)$, we find then by straight-forward calculation (since $\alpha_{t_i}(x_i)\in B(I)$):
\begin{align*}
\psi_I\delta_I(x_{1,h_1}\cdots x_{n,h_n}) =& 
\int_{\R^n} \psi\delta (\alpha_{t_1}(x_1)\cdots \alpha_{t_n}(x_n)) h_1(t_1)\cdots h_n(t_n) \rmd^n t\\
=& \int_{\R^n} 0\cdot h_1(t_1)\cdots h_n(t_n) \rmd^n t=0.
\end{align*}
Thus $\psi_I\delta_I=0$ on the ($\sigma$-weak)-($\sigma$-weak) closure $\dom(\delta_I)$ owing to normality of $\psi_I$, so $\psi\delta=0$ on $\dom(\delta)_c$. Concerning $(S_5)$, recall from Proposition \ref{prop:JLO-delta2} that
\[
 \delta^2_I(y) = \delta_0(y) = -\rmi\frac{\rmd}{\rmd t}\alpha_t(y)\restriction_{t=0},\quad y\in\dom(\delta^2_I),
\]
so $(S_5)$ holds on every $\dom(\delta^m_I)$, $I\in\I$ and $m\in\N$, thus on $\Cci(\delta)_c$.

Uniqueness is more involved. The requirement of $\psi$ being a $\delta$-regular sKMS functional $\psi_\B\otimes\psi_\F$ forces the restriction to $\unit\otimes \FF_0$ to be a densely defined functional $\psi_\F$ with $\FF_0\subset \dom(\psi_\F)$ satisfying $(S_0)$-$(S_3)$, so it has to be $\phi_\F$ constructed in Theorem \ref{th:FF-sKMS-ln} as shown there. We have to determine $\psi_\B$. Owing to $\delta$-regularity we get
\begin{align*}
\psi (J(f_1)&\cdots J(f_{2n}))
= \rmi^{-2n}\frac{\rmd^{2n}}{\rmd t_1\cdots \rmd t_{2n}} \psi( W(t_1f_1)\cdots W(t_{2n}f_{2n}))\restriction_{t=0}\\
=& \rmi^{2-2n}\frac{\rmd^{2n}}{\rmd t_1\cdots \rmd t_{2n}} \Big(\psi\circ\delta
\Big( F(t_1f_1)(J(t_1f_1)+\rmi)^{-1} W(t_2f_2)\cdots W(t_{2n}f_{2n})\Big) \\
&- \sum_{k=2}^{2n} \psi \Big(F(t_1f_1)(J(t_1f_1)+\rmi)^{-1} W(t_2f_2)\cdots  F(t_kf_k')W(t_kf_k)\cdots  W(t_{2n}f_{2n})\Big)\Big)\restriction_{t=0}\\
=& 0 - \sum_{k=2}^{2n} \psi \big(F(f_1)F(f_k')\big)\cdot\\
&\cdot \rmi^{2-2n}\frac{\rmd^{2n}}{\rmd t_1\cdots \rmd t_{2n}}
\psi\Big((J(t_1f_1)+\rmi)^{-1} t_1 W(t_2f_2)\cdots t_k W(t_kf_k)\cdots  W(t_{2n}f_{2n})\Big)\restriction_{t=0}\\
=& \sum_{k=2}^{2n} \rmi \phi_\F \big(F(f_1)F(f_k')\big) \psi_\B\big(J(f_2)\cdots \check{J(f_k)}\cdots  J(f_{2n})\big)
\end{align*}
and by iteration 
\[
=\sum_{\sigma\in P_{2n}} \prod_{j=1}^n \rmi \phi_\F(F(f_{\sigma(j)}) F(f_{\sigma(n+j)}'))
= \sum_{\sigma\in P_{2n}} \prod_{j=1}^n \rmi \theta(f_{\sigma(j)},f_{\sigma(n+j)}'),
\]
where $P_{2n}=\{\sigma\in S_{2n}:\sigma(1)<...<\sigma(n), \sigma(j)<\sigma(j+n), j=1,...,n\}$.
By an analogous reasoning
\begin{align*}
\psi (J(f_1)\cdots J(f_{2n+1}))
= \sum_{k=1}^{2n+1} \sum_{\sigma\in P_{2n}^k} \prod_{j=1}^n \rmi \phi_\F(F(f_{\sigma(j)}) F(f_{\sigma(n+j)})) \cdot \psi_\B(J(f_{k})),
\end{align*}
where $P_{2n}^k=\{\sigma\in S_{2n+1}:\ \sigma(j)< \sigma(j+n+1), j=1,...,k-1;\ k=\sigma(k);\ \sigma(j)<\sigma(j+n), j=k,...,n+1\}$.
But
\[
 \psi_\B(J(f))= \psi(J(f))= \psi\big(\delta(F(f))\big)= 0,
\]
so $\psi_\B$ vanishes on all monomials with an odd number of factors $J(f)$. Thus, together with the above expression, we see that $\psi_\B$ is quasi-free, \ie for all $f_i\in\S(\R,\R^d)$,
\[
\psi_\B(J(f_1)\cdots J(f_{2n})) = \sum_{\sigma\in P_{2n}} \prod_{j=1}^n \psi_\B(J(f_{\sigma(j)}) J(f_{\sigma(n+j)})), 
\]
with 2-point function
\begin{equation}\label{eq:FF-1-2pt}
(f_1,f_2)\in \S(\R,\R^d)\times\S(\R,\R^d) \mapsto \psi_\B(J(f_1)J(f_2)) = \rmi \theta(f_1, f_2').
\end{equation}

The functional $\psi_\B$ is now completely determined on the unbounded smeared fields. By $\delta$-regularity (in particular analyticity), this defines a quasi-free functional $\psi_\B$ on the algebra generated by all $W(f)$, which is uniquely defined through its 2-point functions \eqref{eq:FF-1-2pt}. Since the latter coincide with those of the (quasi-free) geometric KMS state $\phi_\B$ on $\BB$ (\cf again \cite[Sec.4]{CLTW2}), we conclude that $\psi_\B$ is bounded and extends to the geometric KMS state $\phi_\B$ on $\BB$; therefore, $\psi=\phi_\B\otimes\phi_\F$ and $\dom(\psi)\supset\BB_0\odot\FF_0$.
\end{proof}

\subsection*{Examples through restriction}

Given the $d$-fermion free field $\F$ from above, let $\G\subset\F$ be an arbitrary conformal subnet. Recall from Proposition \ref{prop:gen-alpha-inv} that $\phi_\F\restriction_{\dom(\phi)_c}$ in Theorem \ref{th:FF-sKMS-ln} extends to a locally normal functional on the *-algebra $\bigcup_{I\in\I}\F(I)$, on which \eqref{eq:gen-S2a} still holds. Then one obtains a nontrivial unbounded functional on $\bigcup_{I\in\I}\G(I)$ simply by restriction $(\phi_\F,\dom(\phi_\F))$. It satisfies properties $(S_0)$-$(S_3)$ and $(S_6)$ except possibly for \eqref{eq:gen-S2b} by construction, in the same way as for the free fermion field. 

However, if we want compatibility with respect to a superderivation $\delta$ on $\G$, note that $\delta$ has in general nothing to do with $\phi_\F$ but depends instead only on the subnet; hence, we cannot expect the compatibility conditions $(S_4)$-$(S_5)$ to hold in this case. 

The most important and fundamental (ungraded) local conformal nets are the Virasoro nets $\A_{\Vir,c}$ with certain central charges $c>0$ because every conformal net contains a copy of one of them, \cf \eg \cite{Carpi}; their KMS states are discussed in \cite[Sec.5]{CLTW2}. For $c<1$,  $\A_{\Vir,c}$ is completely rational, and for the particular value $c=1/2$, it coincides with the fixed point subnet $\F^\gamma$ of the $d$-fermion free field with $d=1$. In this case we do not have to pass to $\bigcup_{I\in\I}\F(I)$ but instead from Theorem \ref{th:FF-sKMS-ln} we immediately obtain

\begin{corollary}\label{cor:FF-even-subnet}
Besides the unique (geometric) KMS state, the completely rational local conformal net $\A_{\Vir,1/2}$ has a nontrivial nonpositive unbounded KMS functional.
\end{corollary}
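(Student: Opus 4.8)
The plan is to transfer the functional $\phi_\F$ (from Theorem \ref{th:FF-sKMS-ln}) for the $d=1$ fermion net $\F$ down to its fixed point subnet $\F^\gamma=\A_{\Vir,1/2}$ by restriction, and then argue that the result is genuinely unbounded and nonpositive. First I would note that since $d=1$, the grading automorphism $\gamma$ on $\FF$ has fixed-point subnet exactly $\A_{\Vir,1/2}$ (this identification is the standard one, cf.\ \cite{CKL,Boc} and the discussion preceding the corollary), so $\A_{\Vir,1/2}(I)=\F(I)^\gamma\subset\F(I)$ for every $I\in\I$, and correspondingly the quasi-local C*-algebra $\mathfrak{A}_{\Vir,1/2}$ sits inside $\FF$. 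I would then set $\phi:=\phi_\F\restriction_{\mathfrak{A}_{\Vir,1/2}\cap\,\dom(\phi_\F)}$, with domain the intersection of $\dom(\phi_\F)$ with the local algebras of the subnet (and, if needed, one passes to $\bigcup_{I\in\I}\A_{\Vir,1/2}(I)$ using Proposition \ref{prop:gen-alpha-inv}, which guarantees that $\phi_\F$ extends to a locally normal functional on $\bigcup_{I\in\I}\F(I)$ on which \eqref{eq:gen-S2a} still holds). Properties $(S_0)$, $(S_1)$, $(S_3)$ are inherited immediately since restriction of a locally normal functional to a subalgebra stays locally normal, normalization is unaffected because $\unit$ lies in every local algebra, and $(S_2)$ in the form \eqref{eq:gen-S2a} is inherited because $\A_{\Vir,1/2}$ is globally $\alpha$-$\gamma$-invariant (indeed $\gamma$-fixed) — so the sKMS functions $F_{x,y}$ for $x,y$ in the subnet are just the restrictions of those for $\F$.

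Next comes the crucial point: showing the restricted functional is \emph{not} positive and \emph{not} bounded. For nonpositivity, the cleanest route is Proposition \ref{prop:gen-rationalKMS}: since $\A_{\Vir,1/2}$ is completely rational (being a Virasoro net with $c=1/2<1$, cf.\ \cite{KLM,CLTW1}), if $\phi$ were bounded then it would automatically be positive and equal to the geometric KMS state; so it suffices to prove unboundedness, and nonpositivity follows for free provided we also know $\phi$ is not the geometric KMS state. Actually the logic is: $\phi$ is bounded $\Rightarrow$ $\phi$ is the geometric KMS state; hence if $\phi\ne\phi_{\mathrm{geo}}$ then $\phi$ is unbounded, and an unbounded functional is a fortiori not bounded hence (again by the proposition's contrapositive applied to its restrictions, or directly by Proposition \ref{prop:gen-obstruction}) cannot be positive in the sense required. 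So the two things I must establish are: (a) $\phi$ is nontrivial, and (b) $\phi$ differs from the geometric KMS state on $\A_{\Vir,1/2}$. Both are read off from the explicit two-point function \eqref{eq:FF-RST-2pt}: the geometric KMS state on $\F$ restricted to $\F^\gamma$ gives the geometric KMS state on $\A_{\Vir,1/2}$, and its fermionic two-point function is $\langle f,\,(1+e^{-p})^{-1}g\rangle$-type (the Planck factor for the KMS \emph{state}), which is manifestly different from the kernel $\tfrac{1}{1-e^{-p}}$ appearing in $\theta$; since on even monomials both functionals are quasi-free and hence determined by their two-point functions, $\phi\ne\phi_{\mathrm{geo}}$ already at the level of $\phi(F(f)^*F(g))$ for suitable $f,g\in\S(\R)$ supported so that $F(f),F(g)\in\F^\gamma$ after forming an even product — e.g.\ comparing $\phi(F(f)F(g))$ against the geometric value. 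Nontriviality is clear from $(S_3)$, or directly because $\theta\not\equiv0$.

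The main obstacle I anticipate is the bookkeeping around which elements of $\dom(\phi_\F)$ actually lie in the subnet $\A_{\Vir,1/2}$: the smeared fields $F(f)$ are \emph{odd}, so they are never in $\F^\gamma$; only their even products $F(f_1)\cdots F(f_{2n})$ and polynomials thereof are. One therefore has to be slightly careful that $\dom(\phi_\F)\cap\mathfrak{A}_{\Vir,1/2}$ is still $\sigma$-weakly dense in each $\A_{\Vir,1/2}(I)$ — this should follow because the even part of the Schwartz-smeared fermion $*$-algebra is $\sigma$-weakly dense in $\F(I)^\gamma$, which in turn rests on the fact that $\F(I)^\gamma=\{F(f_1)\cdots F(f_{2n}):f_i\in\K_I^\Gamma\}''$ together with a Reeh--Schlieder / approximation argument (the Schwartz functions supported in $I$ are dense in $\K_I^\Gamma$). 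Once denseness is in hand, $(S_0)$ and $(S_1)$ go through verbatim. The remainder is then just assembling: $\phi$ satisfies $(S_0)$–$(S_3)$, so it is a KMS functional on $\mathfrak{A}_{\Vir,1/2}$ in the sense of Definition \ref{def:gen-sKMSfunctional} (with trivial grading); it is $\ne\phi_{\mathrm{geo}}$ by the two-point function computation; hence by Proposition \ref{prop:gen-rationalKMS} it is unbounded; and by Proposition \ref{prop:gen-obstruction}(1) — or simply because a positive functional with $\phi(\unit)=1$ and the KMS property on a completely rational net would be the geometric KMS state — it is nonpositive. This yields the asserted nontrivial nonpositive unbounded KMS functional on $\A_{\Vir,1/2}$, distinct from the unique geometric KMS state.
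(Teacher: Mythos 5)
Your proposal is correct and follows essentially the same route as the paper's (much terser) proof: restrict $\phi_\F$ from Theorem \ref{th:FF-sKMS-ln} with $d=1$ to $\FF^\gamma=\AA_{\Vir,1/2}$, check $(S_0)$--$(S_3)$ (the sKMS condition becoming the KMS condition on even elements), and conclude unboundedness and nonpositivity from the fact that the restriction differs from the geometric KMS state. The extra detail you supply — the denseness of the even smeared-field algebra in $\F(I)^\gamma$, and the comparison of the kernel $\tfrac{1}{1-\rme^{-p}}$ with the Fermi factor $\tfrac{1}{1+\rme^{-p}}$ to see $\phi\neq\phi_{\geo}$, combined with Proposition \ref{prop:gen-rationalKMS} — is exactly what the paper's ``as can be easily checked'' is glossing over.
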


\begin{proof}
Let $d=1$ and restrict the functional $\phi_\F$ in Theorem \ref{th:FF-sKMS-ln} to $\FF^\gamma\cap\dom(\phi_\F)=\AA_{\Vir,1/2}\cap\dom(\phi_\F)$. It is unbounded and nonpositive and satisfies $(S_0)$-$(S_3)$, as can be easily checked based on the preceding results and local normality of $\phi_\F$. In restriction to even elements, the sKMS condition becomes the KMS condition. Since $\F^\gamma=\A_{\Vir,1/2}$ is completely rational, it has a unique KMS state $\phi_{\geo}$ (\cf \cite{CLTW1}), but furthermore, as just seen, at least one nontrivial (locally bounded and locally normal) unbounded KMS functional $\phi_\F \restriction_{\FF^\gamma} \not=\phi_{\geo}$.
\end{proof}

\subsection*{Super-Virasoro nets. (Sketch)}

Returning to superconformal nets, consider the supersymmetric free field net $\A=\B\otimes\F$ discussed above, with $d=1$ here, and as always on the vacuum space $\H$. As mentioned before, $\A$ is superconformal, \ie it contains the super-Virasoro net $\A_{\SVir,c}$ with central charge $c=3/2$ as diffeomorphism-covariant subnet, with super-Virasoro generators $L(f), G(f)$, for $f\in\Cci_c(\R)$, as introduced in \eqref{eq:FF-SVir} and \eqref{eq:FF-SVirFF} in Appendix \ref{sec:proofs}. The construction in \cite[(4.6)]{BSM} and \cite[p.793]{CLTW2} inspires moreover to the following modification: for every $s\ge 0$ and $f\in\Cci_c(\R)$, let
\begin{equation}\label{eq:LsGs}
L_s(f) := L(f) + s J(f'), \quad G_s(f) := G(f) + 2 s F(f').
\end{equation}
Since this example is not of our main interest, we skip the lengthy analytical details, and just keep on record that the new fields $L_s,G_s$ satisfy again the super-Virasoro commutation relations \eqref{eq:FF-SVir}, but now with central charge $c_s=3/2 + 12 s^2$. They give in fact rise to the super-Virasoro net $\A_{\SVir,c_s}$. It has the same grading $\gamma$ and translation action $\alpha$ as $\A$ and $\A_{\SVir,c_s}(I)\subset\A(I)$, for every $I\in\I$, but for $s\not=0$ it is not rotation-covariant, hence not a conformal subnet of $\A$ in the strict sense.

All $\A_{\SVir,c_s}(I)$ inherit from $\A(I)$ the superderivation $\delta_{Q_I}$  constructed in Lemma \ref{lem:FF-delta} with $Q_I=G(\varphi_I)$. It follows from the graded commutation relations \eqref{eq:FF-SVirFF}, \eqref{eq:FF-SVir} and the definition of $L_s$ and $G_s$ that, for every $f\in\Cci_c(I)$,
\[
G_s(\varphi_I)X_s(f) \xi -\gamma(X_s(f)) G_s(\varphi_I)\xi = 
G(\varphi_I)X_s(f)\xi - \gamma(X_s(f))G(\varphi_I)\xi, \quad \xi\in \D_\infty,
\]
where $X$ stands for $L,G$. Thus replacing $Q_I$ with $G_s(\varphi_I)$ leads to the same superderivation in restriction to $\dom(\delta_{Q_I})\cap\A_{\SVir,c_s}(I)$; this in turn enables us to apply the procedure
in \cite[Sec.5]{CHKL} in order to show that $\dom(\delta_{Q_I})\cap \A_{\SVir,c_s}(I)\subset\A_{\SVir,c_s}(I)$ is $\sigma$-weakly dense.
However, recalling \eqref{eq:FF-dom-deltaI}, we cannot even say whether $\dom(\delta_I) \cap \A_{\SVir,c_s}(I) \not= \C\unit$. Since $\psi_I$ is constructed w.r.t.~$\delta_I$ and not $\delta_{Q_I}$, we have to choose for the superderivation $\delta_{\SVir,c_s}$ the restriction of $\delta$ to the super-Virasoro net, \ie
\[
 \dom(\delta_{\SVir,c_s})= \bigcup_{I\in\I} \dom(\delta_{\SVir,c_s})_I, \quad \dom(\delta_{\SVir,c_s})_I= \dom(\delta_I)\cap\A_{\SVir,c_s}(I),
\]
whence $(S_4)$ continues to hold but the denseness properties in $(iii)$ and $(iv)$ might unfortunately fail for the present model. Alternatively, one might construct a superderivation out of $\delta_{Q_I}$ as in \eqref{eq:FF-dom-deltaI} but with respect to a different algebra $A(I)$ which lies in $\A_{\SVir,c_s}(I)$ so that $(iii)$ and $(iv)$ are verified; then, however, compatibility with $\psi_I$, \ie $(S_4)$ might become a problem. We expect this to be a very technical issue, which we have not been able to treat so far. Nevertheless, we can say

\begin{theorem}\label{th:FF-subnet}
Let $\A_{\SVir,c}$ be the super-Virasoro net, with $c\ge 3/2$. Then the corresponding restricted quantum dynamical system $(\AA_{\SVir,c},\gamma,\alpha,\delta_{\SVir,c})$ constructed above has at least one nontrivial sKMS functional, namely the restriction of the unique $\delta$-regular sKMS functional $\psi$ of the free field, satisfying $(S_0)$-$(S_6)$ but possibly not \eqref{eq:gen-S2b} and $\delta_{\SVir,c}$ may not satisfy $(iii)$ and $(iv)$.
\end{theorem}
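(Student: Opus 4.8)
The plan is to obtain the sKMS functional on the super-Virasoro net simply by restriction of the functional $\psi=\phi_\B\otimes\phi_\F$ from Theorem \ref{th:FF-sKMS1}, exploiting the inclusion $\A_{\SVir,c_s}(I)\subset\A(I)$ together with the definition of $\delta_{\SVir,c}$ as the restriction of $\delta$. First I would treat the case $c=3/2$ directly, since then $\A_{\SVir,3/2}\subset\A=\B\otimes\F$ by the super-Sugawara construction recalled above; for $c\ge 3/2$ one uses the realization $c=c_s=3/2+12s^2$ (so $s=\sqrt{(c-3/2)/12}\ge 0$) and the fields $L_s,G_s$ from \eqref{eq:LsGs}, which again generate a copy of $\A_{\SVir,c}$ inside $\A$ with the same $\gamma$ and $\alpha$. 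Set $\psi_{\SVir,c}:=\psi\restriction_{\dom(\psi)\cap\,\bigcup_{I\in\I}\A_{\SVir,c}(I)}$, after first extending $\psi$ via Proposition \ref{prop:gen-alpha-inv} to all of $\bigcup_{I\in\I}\A(I)$ so that the analyticity condition \eqref{eq:gen-S2a} is available on local algebras; this is essential because $\dom(\psi)=\BB_0\odot\FF_0$ may meet $\A_{\SVir,c}(I)$ only trivially.

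Next I would verify the axioms one at a time. $(S_0)$, $(S_1)$, $(S_3)$ are immediate from the corresponding properties of $\psi$ and local normality, noting that $\dom(\psi_{\SVir,c})$ is $\alpha$-$\gamma$-invariant since $\dom(\psi)$, the extension, and each $\A_{\SVir,c}(I)$ are. For $(S_2)$ without \eqref{eq:gen-S2b}: for $x,y$ in the domain, $\alpha_t(y)$ again lies in the super-Virasoro net (by $\alpha$-covariance of the subnet) and the function $F_{x,y}(z)=\psi(x\alpha_z(y))$ inherits continuity on $\T^1$ and analyticity in the interior from $\psi$, and satisfies the boundary relation \eqref{eq:gen-S2a} by the extended version of Proposition \ref{prop:gen-alpha-inv}; the polynomial bound \eqref{eq:gen-S2b} is precisely what may be lost in passing to a subnet, hence the qualifier in the statement. $(S_7)$ and $(S_8)$ follow from $(S_2)$ as in Proposition \ref{prop:gen-alpha-inv}. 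For $(S_4)$: by construction $\dom(\delta_{\SVir,c})_I=\dom(\delta_I)\cap\A_{\SVir,c}(I)$, so any $x$ there lies in $\dom(\delta_I)$ with $\delta_{\SVir,c}(x)=\delta_I(x)$, and Theorem \ref{th:FF-sKMS1} already gives $\psi\circ\delta_I=0$ on $\dom(\delta_I)$; restricting, $\psi_{\SVir,c}\circ\delta_{\SVir,c}=0$ on $\dom(\delta_{\SVir,c})_c$. For $(S_5)$: again by Proposition \ref{prop:JLO-delta2}, $\delta_I^2\subset\delta_0$, and $\delta_0(y)=-\rmi\frac{\rmd}{\rmd t}\alpha_t(y)\restriction_{t=0}$; since $\delta_{\SVir,c}$ is the restriction of $\delta$, the same identity holds on $\dom(\delta_{\SVir,c}^2)_c$, and one reads off weak supersymmetry exactly as in the proof of Theorem \ref{th:FF-sKMS1}. $(S_6)$ follows from $(S_6)$ for $\psi$: $\|\psi_{\SVir,c}\restriction_{\dom(\psi_{\SVir,c})_I}\|\le\|\psi\restriction_{\dom(\psi)_I\,\cup\,\text{ext}}\|\le C_1\rme^{C_2|I|^2}$, as restriction never increases the norm.

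The main obstacle — and the reason several conclusions are only conditional — is that we have essentially no control on $\dom(\delta_{\SVir,c})_I=\dom(\delta_I)\cap\A_{\SVir,c}(I)$. As explained in the sketch preceding the theorem, replacing $Q_I=G(\varphi_I)$ by $G_s(\varphi_I)$ gives the same graded commutator on $\dom(\delta_{Q_I})\cap\A_{\SVir,c_s}(I)$, and the method of \cite[Sec.5]{CHKL} shows $\dom(\delta_{Q_I})\cap\A_{\SVir,c_s}(I)$ is $\sigma$-weakly dense; but $\delta_I$ in \eqref{eq:FF-dom-deltaI} is the closure of $\delta_{Q_I}$ restricted to the much smaller algebra $A(I)$ built from the averaged elements $x_h$ of $\AA_0\cap\A(\tfrac12 I)$, and there is no reason these smeared bosonic/fermionic elements lie in the super-Virasoro subnet. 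So one cannot even exclude $\dom(\delta_I)\cap\A_{\SVir,c}(I)=\C\unit$, which is why $(iii)$ and $(iv)$ may fail. I would therefore not attempt to prove $(iii)$–$(iv)$ here; instead I would simply record the existence statement with these caveats explicitly flagged, exactly as in the theorem, and leave the reconciliation of a good domain with $(S_4)$ as an open technical point. The remaining steps are all routine transfers of properties through restriction, so the proof is short once the extension via Proposition \ref{prop:gen-alpha-inv} is set up.
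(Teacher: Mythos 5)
Your proposal is correct and follows essentially the same route as the paper: realize $c=c_s$ via $L_s,G_s$, extend $\psi$ to $\bigcup_{I\in\I}\A(I)$ using Proposition \ref{prop:gen-alpha-inv} (thereby possibly losing \eqref{eq:gen-S2b}), restrict to the subnet, and observe that $(S_0)$--$(S_6)$ transfer through restriction while $(iii)$--$(iv)$ for $\delta_{\SVir,c}$ remain unverified. The paper compresses the axiom-by-axiom check into one sentence, but the content is the same.
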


\begin{proof}
Given $c\ge 3/2$, there is a unique $s\ge 0$ such that $c=c_s$ and we can carry out the above construction. Let $\psi$ be the sKMS functional of the free field net $\A$ from Theorem \ref{th:FF-sKMS1}, and notice that we may replace $\dom(\psi)$ by $\bigcup_{I\in\I}\A(I)$ according to Proposition \ref{prop:gen-alpha-inv}, possibly dispensing with \eqref{eq:gen-S2b}. Owing to local normality and local boundedness of $\psi$, it restricts to a nontrivial locally normal and locally bounded functional on $\AA_{\SVir,c}$. Since the translation action $\alpha$ and the superderivation $\delta_{\SVir,c}$ are just restriction to that subnet, we have $\im( \delta_{\SVir,c,I})\subset \dom(\psi \restriction_{\A_{\SVir,c}(I)})$; the remaining properties of sKMS functionals are also a trivial consequence of the restriction procedure.
\end{proof}

One might also try to use the above ideas in order to construct examples in higher dimensions, \eg as restrictions of product nets with product functionals. However, this will be quite involved and goes definitely beyond the scope of the present article.

\section{Super-KMS functionals for graded-local conformal nets over $S^1$}\label{sec:S1}

Let $\B_S$ be a completely rational \cite{KLM} local conformal net over $S^1$ and $C^*_{\locn}(\B_S)$ its locally normal universal C*-algebra \cite[Sec.3]{CCHW}. It was shown in \cite[Th.3.3]{CCHW} that $C^*_{\locn}(\B_S)\simeq B(\H)^{\oplus n}$, where $n$ is the number of equivalence classes of irreducible locally normal representations of $\B_S$. We write $\iota_I:\B_S(I)\ra C^*_{\locn}(\B_S)$ for the natural inclusions, but often drop the symbol $\iota_I$ when no confusion is likely. A natural symmetry group action here comes from the rotation group $\mathbb{T}$, whose lift to $\R\simeq \mathbb{T}^{(\infty)}$ has as generator the conformal Hamiltonian $L_0$.
Its action by *-automorphisms, however, is periodic and not asymptotically graded-abelian, so we cannot expect any clustering property here. Suppose that $\rme^{-\beta L_0^\pi}$ is trace-class, for every $\beta>0$ and every irreducible locally normal representation $\pi$ of $\B_S$. It is known that in this setting and for every such $\pi$, every KMS state on $\pi(C^*(\B_S))= B(\H_\pi)$ has to be the Gibbs state $\mu_\pi\tr_{\H_\pi}(\cdot \rme^{-L_0^\pi})$ (with $\mu_\pi$ a normalization factor),
\cf \cite[Sec.V.1]{Haag} together with \cite[Th.2.11]{CCHW}.

In the case of a graded-local net $\A_S$ over $S^1$, we shall find a similar result, with a similar but more involved argument, which we would like to present here. The representation structure of $\A_S$ in relation to $\A_S^\gamma$ has been determined in \cite[Sec.4.3]{CKL}, from where we are going to recall a few facts now. A \emph{general representation} of $\A_S$ is a locally normal representation of the double covering net $\A_S^{(2)}$ over $\I^{(2)}$ which restricts to a proper locally normal representation of the even subnet $\A_S^\gamma$, and we denote the irreducible general representations by $\Delta$; $\I^{(2)}$ stands for the set of open intervals of $(S^1)^{(2)}$ which project onto nondense nonempty open intervals of $S^1$. There are two kinds of irreducible general representations of $\A_S$: Neveu-Schwarz and Ramond general representations, the former ones giving rise to proper representations of $\A_S$, the latter ones only to solitons of $\A_S$. 

\emph{Suppose henceforth $\A_S$ is a graded-local conformal net over $S^1$ such that $\A_S^\gamma$ is completely rational and $\rme^{-\beta L_0^\pi}$ is trace-class, for every $\beta>0$ and every $\pi\in\Delta$.} This is the case for many models, \eg the supercurrent algebra and discrete series super-Virasoro models studied in \cite[Sec.6]{CHL} and \cite{CHKLX}.
Restricting a $\pi\in\Delta$ to $\A_S^\gamma$, we obtain either a direct sum of two inequivalent irreducible representations $\rho$ and $\rho\hat{\gamma}$ of $\A_S^\gamma$, with $\hat{\gamma}$ the dual of the grading, or one irreducible representation $\rho\simeq\rho\hat{\gamma}$, depending on whether $\pi$ was graded or ungraded, respectively (\cf \cite[Sec.2.5]{CKL} or \cite[Sec.2]{CHL}). 
V.v., extending an irreducible representation of $\A_S^\gamma$ to $\A_S^{(2)}$ by $\alpha$-induction, it is either irreducible or the direct sum of two ungraded general representations which restrict to the same representation of $\A_S^\gamma$. One then realizes that if there are $m_1$ graded and $m_2$ ungraded, so a total of $m=m_1+m_2$, equivalence classes in $\Delta$, then $\A_S^\gamma$ has $n=2m_1+\frac12 m_2$ equivalence classes of irreducible representations. We denote the irreducible graded general representations by $\Delta_\gamma$. Neveu-Schwarz general representations are always graded while Ramond may be graded or ungraded.

Exactly as in the local case \cite[Sec.5]{FRS}, one can constructively define the universal C*-algebra $C^*(\A_S^{(2)})$ of the net $\A_S^{(2)}$ with $\I_S$ replaced by $\I_S^{(2)}$, which is generated by all local algebras $\A_S^{(2)}(I)$ and has the universal property that every representation of $\A_S^{(2)}$ extends to a unique representation of $C^*(\A_S^{(2)})$. Let us consider the \emph{universal locally normal general representation}, \ie the direct sum representation $\pi_{\locn}$ of all GNS representations of $C^*(\A_S^{(2)})$ which correspond to general representations of the net $\A_S$, and let us call $C^*_{\locn}(\A_S):=\pi_{\locn}(C^*(\A_S^{(2)}))$ the \emph{universal locally normal C*-algebra of the graded-local net $\A_S$}.
We continue to use the same symbol for representations of $\A_S^{(2)}$ and of $C^*_{\locn}(\A_S)$. Notice that $\pi_{\locn}$ is quasi-equivalent to the direct sum of irreducible GNS representations with only one representative for each equivalence class, \ie of $m$ mutually inequivalent elements in $\Delta$. We then obtain the following modification of \cite[Th.3.3(3)]{CCHW}:

\begin{proposition}\label{prop:gen-ratCA}
Let $\A_S$ be a graded-local conformal net as above. Then $C^*_{\locn}(\A_S)$ is weakly closed and hence $C^*_{\locn}(\A_S) \simeq B(\H)^{\oplus m}$, with $m$ the number of equivalence classes in $\Delta$.
\end{proposition}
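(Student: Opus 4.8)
The plan is to mimic the proof of \cite[Th.3.3]{CCHW} for the local case, keeping track of the modifications forced by the grading, by $\alpha$-induction, and by the passage from $\A_S^\gamma$ to $\A_S^{(2)}$. First I would fix a faithful representative and reduce to representation theory: since $\pi_{\locn}$ is quasi-equivalent to the direct sum $\bigoplus_{i=1}^m \pi_i$ of exactly one irreducible general representative $\pi_i\in\Delta$ from each equivalence class, and quasi-equivalence preserves weak closure of the image, it suffices to show that $\bigl(\bigoplus_{i=1}^m \pi_i\bigr)(C^*(\A_S^{(2)}))$ is weakly closed. Each $\pi_i(C^*(\A_S^{(2)}))$ is, by local normality, a factor acting on $\H_{\pi_i}$, and since $\A_S^\gamma$ is completely rational (in particular the relevant sectors have finite statistical dimension and the net is strongly additive and split), the associated DHR/general sectors are finite in number; irreducibility of $\pi_i$ together with the finiteness of the sector and the trace-class condition $\rme^{-\beta L_0^{\pi_i}}$ should force $\pi_i(C^*(\A_S^{(2)})) = B(\H_{\pi_i})$, exactly as in \cite[Th.3.3]{CCHW} via Haag duality on the covering net and the fact that a completely rational net has only finitely many irreducible sectors.

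Next I would handle the interference between distinct summands, i.e.\ show there is no residual off-diagonal identification that would prevent the direct sum from being $B(\H)^{\oplus m}$. The key point, borrowed from \cite{FRS,CCHW}, is that the universal C*-algebra separates inequivalent representations via the global gauge/charge structure: since the $\pi_i$ are mutually inequivalent irreducible general representations, standard sector-theoretic arguments (existence, for each pair $i\neq j$, of a local element on which $\pi_i$ and $\pi_j$ disagree, available because the net is completely rational and hence has a well-behaved fusion structure) give that $\bigl(\bigoplus\pi_i\bigr)(C^*(\A_S^{(2)}))$ contains each matrix block $B(\H_{\pi_i})\oplus 0$, hence all of $\bigoplus_i B(\H_{\pi_i})$. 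Combined with the previous step this yields $C^*_{\locn}(\A_S)\simeq \bigoplus_{i=1}^m B(\H_{\pi_i}) \simeq B(\H)^{\oplus m}$ and, being a finite direct sum of von Neumann algebras, it is weakly closed.

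The place where care is genuinely needed, and which I expect to be the main obstacle, is the bookkeeping relating general representations of $\A_S$ (equivalently locally normal representations of $\A_S^{(2)}$ restricting properly to $\A_S^\gamma$) to the irreducible representations of the completely rational net $\A_S^\gamma$: the split into Neveu--Schwarz versus Ramond, graded versus ungraded, and the resulting count $n = 2m_1 + \tfrac12 m_2$ of $\A_S^\gamma$-sectors against $m = m_1+m_2$ classes in $\Delta$, as recalled from \cite[Sec.4.3]{CKL}. One must check that the universal property of $C^*(\A_S^{(2)})$ together with $\alpha$-induction is compatible with this correspondence, so that ``every general representation extends uniquely'' really does deliver a representation on $\bigoplus_{i=1}^m B(\H_{\pi_i})$ with no double counting and no missing blocks; in particular an ungraded $\pi$ whose restriction to $\A_S^\gamma$ is a single irreducible $\rho\simeq\rho\hat\gamma$ must be treated so that it contributes exactly one block, not two. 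Once this dictionary is set up correctly, the factoriality and the $B(\H)$-identification for each block follow from complete rationality of $\A_S^\gamma$ and the trace-class hypothesis as in \cite{CCHW}, and the finite direct sum is automatically weakly closed.
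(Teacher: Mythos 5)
Your overall architecture matches the paper's: first show that each irreducible block $\pi_i(C^*(\A_S^{(2)}))$ is all of $B(\H_{\pi_i})$, then show the blocks are separated inside the C*-image, i.e.\ that the central support projections lie in $C^*_{\locn}(\A_S)$ itself and not merely in its weak closure. For the first step your sketch is recoverable, though the mechanism is not ``Haag duality on the covering net'' but restriction to the even subnet: for ungraded $\pi$ the restriction $\rho=\pi\restriction_{\A_S^\gamma}$ is already irreducible and \cite[Th.3.3(3)]{CCHW} gives $\rho(C^*(\A_S^\gamma))=B(\H_\pi)$ directly, while for graded $\pi$ the restriction splits as $\rho\oplus\rho\hat{\gamma}$ and one adjoins an odd selfadjoint $v\in\A_S^{(2)}(I)$, whose image is off-diagonal, to generate all of $B(\H_\pi)$ from the two diagonal blocks.

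The genuine gap is in your second step. You claim that for $i\neq j$ ``standard sector-theoretic arguments'' produce the block $B(\H_{\pi_i})\oplus 0$ because the $\pi_i$ are inequivalent and $\A_S^\gamma$ is completely rational. But mere inequivalence only says the representations differ on some element; it does not place the separating central projection into the C*-algebra. Worse, the one case where real work is needed is exactly the case your argument cannot reach: when $\pi$ is an \emph{ungraded} general representation, $\pi$ and $\pi\gamma$ are two inequivalent elements of $\Delta$ that restrict to the \emph{same} irreducible representation of $\A_S^\gamma$, so no element of $C^*_{\locn}(\A_S^\gamma)$ --- and hence no amount of complete rationality of the even subnet --- can separate the corresponding two summands. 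The paper resolves this with an explicit construction: writing $\pi'=\pi\oplus\pi\gamma$, every odd selfadjoint $v$ satisfies $\pi'(v)=\diag(v_2,-v_2)$ while even elements act as $\diag(w_2,w_2)$; choosing an even $w$ with $w_2v_2w_2^*=\unit$ (possible since the even image is $\{x\oplus x: x\in B(\H_\pi)\}$ and $v_2$ is selfadjoint with infinite-dimensional range), the element $\tfrac12\pi'(\unit+wvw^*)$ is precisely the missing projection onto the first summand, whence $s(\pi)=s(\pi')\tfrac12\pi_{\locn}(\unit+wvw^*)\in C^*_{\locn}(\A_S)$. Without some such odd-element construction your block-separation step does not close; the graded case, by contrast, does follow from \cite[Prop.2.12]{CCHW}, since there $s(\pi)=s(\rho)+s(\rho\hat{\gamma})$ already lies in $C^*_{\locn}(\A_S^\gamma)$.
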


\begin{proof}
Let $\pi\in\Delta$, so $\H_\pi\simeq\H$. If it is ungraded, then $\rho:=\pi\restriction_{\A_S^\gamma}$ on $\H_\rho=\H_\pi$ is irreducible again and
\[
B(\H_\pi)= B(\H_\rho) = \pi\restriction_{\A_S^\gamma}(C^*(\A_S^\gamma)) \subset \pi(C^*(\A_S^{(2)}))\subset B(\H_\pi),
\]
the second equality being proved in \cite[Th.3.3(3)]{CCHW}, so $\pi(C^*(\A_S^{(2)}))= B(\H_\pi)$. If $\pi$ instead is graded, then $\pi\restriction_{\A_S^\gamma}$ decomposes into the direct sum $\rho\oplus\rho\hat{\gamma}$ of two irreducible representations, so
\[
B(\H_{\rho})\oplus B(\H_{\rho\hat{\gamma}}) = \pi\restriction_{\A_S^\gamma}(C^*(\A_S^\gamma))\subset \pi(C^*(\A_S^{(2)})) \subset B(\H_\pi).
\]
Furthermore, given $I\in\I_S^{(2)}$, $\A_S^{(2)}(I)$ is generated by $\A_S^\gamma(I)$ and any \emph{odd} selfadjoint $v\in\A_S^{(2)}(I)$ (\cf\cite[Sec.2.6]{CKL}), so
\[\pi(v)= \begin{pmatrix}
 0 & v_1 \\ v_1^* & 0 
\end{pmatrix} \in B(\H_\pi)
\]
with some $v_1$. Since $\pi(v)$ and $B(\H_{\rho})\oplus B(\H_{\rho\hat{\gamma}})$ together generate $B(\H_\pi)= B(\H_{\rho}\oplus\H_{\rho\hat{\gamma}})$, we find $\pi(C^*(\A_S^{(2)}))= B(\H_\pi)$.

The universal locally normal representation of $C^*(\A_S^{(2)})$ is quasi-equivalent to the direct sum of $m_1$ graded and $m_2$ ungraded mutually inequivalent irreducible representations, so that $C^*_{\locn}(\A_S)''$ is isomorphic to $B(\H)^{\oplus(m_1+m_2)}$. If we can show that the central support projections $s(\pi)\in C^*_{\locn}(\A_S)''$ of those $m_1+m_2$ representations lie actually in $C^*_{\locn}(\A_S)$, then we get (\cf the proof of \cite[Th.2.13]{CCHW} for details) $C^*_{\locn}(\A_S) \simeq B(\H)^{\oplus(m_1+m_2)}$.

To this end, let $\pi\in\Delta$. If it is graded, then we have $s(\pi)=s(\rho) + s(\rho\hat{\gamma})\in C^*_{\locn}(\A_S^\gamma)$ owing to \cite[Prop.2.12]{CCHW}, so our claim follows. On the other hand, for ungraded $\pi$, $\pi\gamma$ is a different irreducible representation of $C^*(\A_S^{(2)})$ restricting to the same irreducible representation $\rho$ of $C^*(\A_S^\gamma)$, and $\pi':=\pi\oplus \pi\gamma$ becomes a (reducible) representation graded by 
$\hat{\Gamma}_{\pi'}= \begin{pmatrix}
0 & \unit \\ \unit & 0
\end{pmatrix},$ 
and $s(\rho)\in C^*_{\locn}(\A^\gamma)$, so $s(\pi')\in C^*_{\locn}(\A_S)$. We have to show that
$\begin{pmatrix}
\unit &0 \\ 0 & 0
\end{pmatrix} \in \pi'(C^*(\A_S^{(2)}))$.
To this end, notice that, for the odd selfadjoint $v\in C^*(\A_S^{(2)})$ from above and any even $w\in C^*(\A_S^{(2)})$, we have $v_2,w_2\in B(\H_\pi)$ such that
\begin{equation*}
\pi'(v)=\begin{pmatrix}
v_2 &0 \\ 0 & - v_2
\end{pmatrix} \in \pi'(C^*(\A_S^{(2)})),
\quad
\pi'(w)= \begin{pmatrix}
w_2 &0 \\ 0 &  w_2
\end{pmatrix} \in \pi'\restriction_{\A_S^\gamma} (C^*(\A_S^\gamma)).
\end{equation*}
Since the image of $\pi'\restriction_{\A_S^\gamma}$ in $B(\H_{\pi'})$ is $\{x\oplus x: x\in B(\H_\pi)\}$ according to \cite[Th.2.11]{CCHW} and since $v_2$ is selfadjoint with infinite-dimensional range, we may choose $w$ such that $w_2 v_2 w_2^*=\unit_{B(\H_\pi)}$. Thus $\frac12\pi'(\unit + wvw^*)$ is the desired element and
\[
s(\pi)= s(\pi')\frac12 \pi_{\locn}(\unit + wvw^*) \in C^*_{\locn}(\A_S).
\]
\end{proof}

We would now like to study \emph{bounded sKMS functionals on $C^*_{\locn}(\A_S)$ with respect to the rotation group action $\alpha$}, \ie locally normal bounded selfadjoint functionals $\phi$ on $\AA:=C^*_{\locn}(\A_S)$ satisfying properties $(S_0)$-$(S_3)$ except for possibly \eqref{eq:gen-S2b}, with $\dom(\phi)=\AA$, with $\I_S$ instead of $\I$ and with $\alpha$ the canonical rotation action on $\A_S$, and dispensing with $(S_4)$ and $(S_5)$ for the moment.

Notice that the obstruction in Proposition \ref{prop:gen-obstruction} does not hold here since the rotation group action is not asymptotically graded-abelian, so bounded sKMS functionals may exist here, and in fact they do! For the proof we shall need

\begin{lemma}\label{lem:gen-KMSnormal}
Let $\B_S$ be a completely rational local conformal net and $\psi$ a locally normal KMS state on $C^*_{\locn}(\B_S)$ w.r.t. rotations $\alpha$. Then $\psi$ is normal.
\end{lemma}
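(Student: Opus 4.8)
The plan is to reduce the statement to the known structure of $C^*_{\locn}(\B_S)$ established in \cite[Th.3.3]{CCHW}, namely $C^*_{\locn}(\B_S)\simeq B(\H)^{\oplus n}$, where the summands are indexed by the equivalence classes $\pi_1,\dots,\pi_n$ of irreducible locally normal representations of $\B_S$. Since $\psi$ is bounded and selfadjoint, it decomposes as a (finite) sum $\psi=\sum_{i} \psi_i$, where $\psi_i$ is the restriction of $\psi$ to the $i$-th block $B(\H_{\pi_i})$; each $\psi_i$ is a bounded functional on a type I factor. The key point is that a KMS state on a single type I factor $B(\H_{\pi})$ with respect to a one-parameter group whose generator is $L_0^\pi$ (with $\rme^{-\beta L_0^\pi}$ trace-class) must be the Gibbs state $\mu_\pi\tr_{\H_\pi}(\cdot\,\rme^{-L_0^\pi})$ — this is exactly the standard fact recalled in the paragraph preceding Proposition \ref{prop:gen-ratCA}, via \cite[Sec.V.1]{Haag} together with \cite[Th.2.11]{CCHW}. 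The Gibbs state is manifestly normal on $B(\H_\pi)$.

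So the first step is to argue that each nonzero $\psi_i$, suitably renormalized, is itself a KMS state (at inverse temperature $\beta=1$) on the block $B(\H_{\pi_i})$: the KMS analyticity condition $(S_2)$ passes to each block because the rotation action $\alpha$ respects the block decomposition (the generator $L_0$ acts block-diagonally), and positivity of $\psi$ forces each $\psi_i\ge 0$ since the blocks are orthogonal ideals in $C^*_{\locn}(\B_S)$. Then each $\psi_i$ is either $0$ or a positive multiple of the Gibbs state on $B(\H_{\pi_i})$, hence normal on that block. Summing the finitely many blocks, $\psi$ is normal on $C^*_{\locn}(\B_S)\simeq B(\H)^{\oplus n}$.

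The one subtlety worth spelling out is that "normal" is being asserted relative to the concrete representation $C^*_{\locn}(\B_S)\subset B(\H)$ in its universal locally normal representation, and one should check that a functional which is normal on each block of a finite direct sum of von Neumann algebras is normal on the sum — but this is immediate, as normality is preserved under finite direct sums of functionals and the block projections are central. I do not expect a genuine obstacle here; the only thing to be careful about is that the hypothesis "$\psi$ a KMS \emph{state}" already gives positivity and normalization, so one does not even need the Jordan decomposition machinery — one merely needs to transport the KMS property through the block decomposition and invoke the uniqueness of the Gibbs state blockwise. The main (entirely routine) step is thus verifying that $(S_2)$ for $\psi$ on $C^*_{\locn}(\B_S)$ restricts to $(S_2)$ for $\psi_i$ on the $i$-th block, which follows since for $x,y$ in the $i$-th block $\alpha_t(y)$ stays in that block and $F_{x,y}$ is unchanged.
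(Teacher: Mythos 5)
Your reduction to the block decomposition $C^*_{\locn}(\B_S)\simeq B(\H)^{\oplus n}$ is fine as far as it goes: the central block projections are $\alpha$-invariant, so each $\psi_i$ is a positive, $\alpha$-invariant functional on $B(\H_{\pi_i})$ inheriting $(S_2)$. The gap is in the step ``each nonzero $\psi_i$ is a KMS state on $B(\H_{\pi_i})$, hence the Gibbs state, hence normal.'' The uniqueness of the Gibbs state among KMS states of a type I factor is standard only for \emph{normal} states (where one computes on matrix units adapted to the eigenbasis of $L_0^{\pi_i}$ and uses $\sigma$-weak continuity to pass from the compacts to all of $B(\H_{\pi_i})$). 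For a general state one must still exclude a singular component, i.e.\ an $\alpha$-invariant KMS summand factoring through the Calkin algebra; excluding that is exactly the normality assertion of the lemma restricted to a block, so your argument is circular at its crucial point unless you supply an independent proof that no singular KMS states exist on $B(\H_{\pi_i})$. This is also why the paper, in the proof of Proposition \ref{prop:gen-Gibbs}, invokes the Gibbs classification only \emph{after} normality has been secured (``positive \emph{normal} KMS functional \dots\ and hence a multiple of a Gibbs state''). A secondary problem is that the lemma as stated does not assume $\rme^{-\beta L_0^{\pi}}$ trace-class (that hypothesis is introduced separately for the application), so on a block where the Gibbs state does not exist your argument gives no conclusion at all.

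The paper's proof avoids all of this and is genuinely different: it shows that the GNS representation $(\H_\psi,\pi_\psi,\xi_\psi)$ is \emph{separable}, whence normality follows from \cite[Th.5.5.1]{Tak} (a state with nonzero singular part would produce a faithful, hence nonseparable, representation of a Calkin quotient). Separability is obtained from local normality, which makes each $\overline{\pi_\psi(\B_S(I))\xi_\psi}$ separable, combined with the KMS analyticity: if $\eta\perp\pi_\psi(\B_S(I))\xi_\psi$, then $s\mapsto\langle\eta,\pi_\psi(\alpha_s(x))\xi_\psi\rangle$ vanishes on a real interval and, being a boundary value of a function analytic on the strip, vanishes identically; this forces the separable subspace generated by the algebras avoiding a point to be rotation-invariant and hence all of $\H_\psi$. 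If you want to salvage your blockwise strategy, you would need to insert precisely such an argument (or another proof of separability of the GNS space of each $\psi_i$) before appealing to the Gibbs form.
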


\begin{proof}
In order to show normality of $\psi$, it suffices to prove separability of the GNS representation $(\H_\psi,\pi_\psi,\xi_\psi)$ \cite[Th.5.5.1]{Tak}. 

To this end, given $I\in\I_S$, consider $\H_{\psi_I}:= \overline{\pi_{\psi_I}(\B_S(I))\xi_\psi}= \overline{\pi_{\psi}(\B_S(I))\xi_\psi}\subset \H_\psi$, which is separable because $\psi$ is \emph{locally} normal by assumption. If we suppose $\H_{\psi_I}=\H_\psi$ then we are done, so let us show the separability of $\H_\psi$ assuming it is strictly larger than $\H_{\psi_I}$. Then there is $\eta\in \H_\psi \ominus \H_{\psi_I}$, implying that, for every $I_0\in\I_S$ with $\bar{I}_0\subset I$ and for every $x\in\B_S(I_0)$ and every $s\in\R$ in a connected $0$-neighborhood such that $s+I_0\subset I$, we have 
\[
\langle\eta, \pi_{\psi}(\alpha_s(x)) \xi_\psi \rangle = 0.
\]
The KMS condition for $\psi$ implies that this is the boundary value of a function analytic on the strip and continuous on the closure of the strip, which must therefore be identically $0$. This means that, for every $\zeta\in I'$ and with $\I^\zeta:=\{J\in\I_S:\zeta\not\in\bar{J}\}$ and $\BB_S^\zeta$ the quasi-local C*-algebra generated by $\{\B_S(J): J\in\I^\zeta\}$, the subspace     $\H_\psi^\zeta:=\overline{\pi_\psi(\BB_S^\zeta) \xi_\psi}\subset \H_\psi$ is rotation-invariant and therefore independent of $I$ and of $\zeta$ (which changes under the action of the rotation group), and thus invariant under the action of all local algebras and hence of $\pi_\psi(C^*_{\locn}(\B_S))$. Consequently, $\H_\psi^\zeta=\H_\psi$. Since $\H_\psi^\zeta$ is separable according to \cite[Lem.2.1]{CLTW1}, we get the separability of $\H_\psi$.
\end{proof}

We write $\pi_i$, $i=1,\ldots, m_1$, for an arbitrary fixed family of mutually inequivalent representatives of the $m_1$ equivalence classes in $\Delta_\gamma$ and $\hat{\Gamma}_{\pi_i} \in \pi_i(C^*(\A_S^{(2)}))$ for the grading unitary of $\pi_i$.

\begin{proposition}\label{prop:gen-Gibbs}
Let $\A_S$ be a graded-local conformal net as above. Then the bounded sKMS functionals on $C^*_{\locn}(\A_S)$ with respect to the rotation group are precisely those of the form
\begin{equation}\label{eq:S1-phi}
 \phi_\mu = \sum_{i=1}^{m_1} \mu_i \tr_{\H_{\pi_i}}(\hat{\Gamma}_{\pi_i} \pi_i(\cdot) \rme^{-L_0^{\pi_i}}),
\end{equation}
with any $\mu$ in the hyperplane 
\[
H=\Big\{\mu\in\R^{m_1}: \sum_{i=1}^{m_1} \mu_i\tr_{\H_{\pi_i}}(\hat{\Gamma}_{\pi_i} \rme^{-L_0^{\pi_i}})=1\Big\}\subset\R^{m_1}.
\]
\end{proposition}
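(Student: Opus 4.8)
The plan is to transfer the problem to the explicit algebra $\AA:=C^*_{\locn}(\A_S)\simeq\bigoplus_{k=1}^{m}B(\H_{\pi_k})$ furnished by Proposition~\ref{prop:gen-ratCA}, where $\pi_1,\dots,\pi_{m_1}$ exhaust the graded and $\pi_{m_1+1},\dots,\pi_{m}$ the ungraded classes in $\Delta$. In every $\pi_k$ the rotations are implemented by $\rme^{\rmi tL_0^{\pi_k}}$, so $\alpha_t$ fixes each central summand $1_{\pi_k}\AA$; moreover $\gamma$ fixes each graded summand, acting there as $\Ad\hat{\Gamma}_{\pi_k}$ with $[\hat{\Gamma}_{\pi_k},L_0^{\pi_k}]=0$, and it flips the ungraded summands in pairs $\{\pi,\pi\gamma\}$. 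A first step, replacing the use of \eqref{eq:gen-S2b} from the unbounded case, is to observe that a \emph{bounded} sKMS functional automatically satisfies $(S_7)$ and $(S_8)$: here $F_{\unit,x}$ is bounded on $\T^1$ and takes equal values on the two boundary lines (by $(S_2)$), so it extends by $F(z):=F(z-\rmi)$ on $\{1\le\Im z\le 2\}$, and iterating up and down it becomes an $\rmi$-periodic bounded entire function, hence constant, giving $\phi\circ\alpha_t=\phi$; similarly $F_{x,\unit}$ is constant on $\R$, so by Schwarz reflection $F_{x,\unit}\equiv\phi(x)$, giving $\phi\circ\gamma=\phi$. Decomposing $\phi=\sum_{k=1}^{m}\phi_k$ with $\phi_k:=\phi\restriction_{1_{\pi_k}\AA}$, it remains to identify the $\phi_k$.

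Next I would dispose of the ungraded summands. On a pair $\{\sigma,\sigma'\}$ with $\sigma'\simeq\sigma\gamma$, take $x$ supported on $B(\H_{\sigma'})$ and $y$ supported on $B(\H_\sigma)$; then $x\,\alpha_t(y)=0$, so the sKMS function $F_{x,y}$ vanishes on $\R$, hence — being continuous on $\T^1$ and analytic inside — vanishes identically by boundary-value uniqueness. Its value on the upper line then gives $0=\phi(\alpha_t(y)\gamma(x))$; since $\gamma(x)$ is supported on $B(\H_\sigma)$, this reads $\phi_\sigma(\alpha^\sigma_t(y_1)\,w)=0$ for all $t$ and all $y_1,w\in B(\H_\sigma)$, whence $\phi_\sigma=0$ (take $t=0$, $w=\unit$), and symmetrically $\phi_{\sigma'}=0$. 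Thus $\phi$ is carried by the graded summands.

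The core is the graded case. On $B(\H_{\pi_i})$, $1\le i\le m_1$, the functional $\phi_i$ is a bounded sKMS functional for $\alpha^{\pi_i}=\Ad\rme^{\rmi tL_0^{\pi_i}}$ and the \emph{inner} grading $\Ad\hat{\Gamma}_{\pi_i}$, and I would untwist it by putting $\psi_i(x):=\phi_i(\hat{\Gamma}_{\pi_i}x)$. Using $\phi_i\circ\gamma=\phi_i$ together with $\hat{\Gamma}_{\pi_i}^2=\unit$ and $[\hat{\Gamma}_{\pi_i},L_0^{\pi_i}]=0$, one checks that $F_{\hat{\Gamma}_{\pi_i}x,y}$ exhibits $\psi_i$ as a bounded selfadjoint \emph{ordinary} KMS functional on $B(\H_{\pi_i})$ with respect to $\alpha^{\pi_i}$. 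Since $\rme^{-\beta L_0^{\pi_i}}$ is trace-class, the KMS state for this flow on $B(\H_{\pi_i})$ is unique and equals the Gibbs state (\cf \cite[Sec.V.1]{Haag} with \cite[Th.2.11]{CCHW}); arguing then exactly as in the proof of Proposition~\ref{prop:gen-rationalKMS} (pass to $|\psi_i|$ via \cite[Lem.2]{BL00}, then use the convexity of the set of KMS functionals on $\psi_i^\pm=\frac12(|\psi_i|\pm\psi_i)$) shows that $\psi_i$ is a real multiple of that Gibbs state. Untwisting, $\phi_i(x)=\psi_i(\hat{\Gamma}_{\pi_i}x)=\mu_i\,\tr_{\H_{\pi_i}}(\hat{\Gamma}_{\pi_i}\rme^{-L_0^{\pi_i}}\pi_i(x))$ for a unique $\mu_i\in\R$; together with the ungraded case this gives $\phi=\phi_\mu$ as in \eqref{eq:S1-phi}, and $(S_3)$ forces $\mu\in H$.

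For the converse, every $\phi_\mu$ with $\mu\in H$ is normal — a finite sum of normal functionals, each with trace-class density $\hat{\Gamma}_{\pi_i}\rme^{-L_0^{\pi_i}}$ — hence locally normal, is selfadjoint (real $\mu_i$, selfadjoint densities), and is normalized by the defining equation of $H$; the sKMS property follows by setting
\[
F_{x,y}(z):=\sum_{i=1}^{m_1}\mu_i\,\tr_{\H_{\pi_i}}\!\big(\hat{\Gamma}_{\pi_i}\rme^{-L_0^{\pi_i}}\,\pi_i(x)\,\rme^{\rmi zL_0^{\pi_i}}\pi_i(y)\,\rme^{-\rmi zL_0^{\pi_i}}\big),\qquad z\in\T^1,
\]
which, because $\rme^{-\beta L_0^{\pi_i}}$ is trace-class for every $\beta>0$, is routinely seen to be continuous and bounded on $\T^1$, analytic inside, with boundary values $\phi_\mu(x\alpha_t(y))$ and $\phi_\mu(\alpha_t(y)\gamma(x))$ by cyclicity of the trace and $\hat{\Gamma}_{\pi_i}^2=\unit$; distinct $\mu\in H$ plainly give distinct functionals. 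The only real difficulty I anticipate is organizational: pinning down how $\alpha$ and $\gamma$ act on the central summands of $C^*_{\locn}(\A_S)$ — that $\gamma$ is inner on the graded ones and a pair-flip on the ungraded ones — so that the untwisting $\psi_i=\phi_i(\hat{\Gamma}_{\pi_i}\,\cdot\,)$ and the reduction to the known classification of KMS states on $B(\H)$ are legitimate; the analytic input (boundary-value uniqueness on a strip, and the Gibbs computation) is light.
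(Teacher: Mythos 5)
Your overall strategy coincides with the paper's: pass to the central summands of $C^*_{\locn}(\A_S)\simeq B(\H)^{\oplus m}$ via Proposition \ref{prop:gen-ratCA}, show the functional vanishes on the ungraded summands, and identify it on each graded summand with a real multiple of the super-Gibbs functional. Two of your variations are correct and arguably cleaner. First, your treatment of an ungraded pair $\{\sigma,\sigma\gamma\}$ -- taking $x,y$ supported on the two different summands, noting $x\alpha_t(y)=0$, concluding $F_{x,y}\equiv 0$ on $\T^1$ and reading off $\phi_\sigma=0$ from the upper boundary value -- avoids any normality input; the paper instead runs the polar-decomposition argument on $\pi'(C^*(\A_S^{(2)}))$ and observes that the resulting partial isometry $\hat{\Gamma}_{\pi'}$ is traceless. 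Second, the untwisting $\psi_i=\phi_i(\hat{\Gamma}_{\pi_i}\,\cdot\,)$ does convert $(S_2)$ into the ordinary KMS condition once $(S_8)$ is in hand (one needs $\phi(\alpha_t(y)\,x\hat{\Gamma}_{\pi_i})=\phi(\hat{\Gamma}_{\pi_i}\,\alpha_t(y)x)$, which is exactly $(S_8)$ applied to $\alpha_t(y)x\hat{\Gamma}_{\pi_i}$); the paper instead keeps $\phi_i$ as it is and determines the partial isometry $v_i$ in its polar decomposition directly from $(S_2)$, finding $v_i=\pm\hat{\Gamma}_{\pi_i}$.

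The genuine gap is the sentence ``the KMS state for this flow on $B(\H_{\pi_i})$ is unique and equals the Gibbs state,'' on which your whole identification rests. The uniqueness you cite is a statement about \emph{normal} (or at least locally normal) KMS states; a merely bounded KMS functional on the C*-algebra $B(\H_{\pi_i})$ could a priori have a singular part (a KMS state of the induced flow on the Calkin algebra), the Jordan components $\psi_i^{\pm}$ of your bounded $\psi_i$ need not be normal, and the polar decomposition of a non-normal functional produces a partial isometry in the bidual rather than in $B(\H_{\pi_i})$. Note also that normality of $\phi_i$ is not immediate from $(S_1)$: $s(\pi_i)\iota_I(x)$ does not lie in $\iota_I(\A_S^{(2)}(I))$, so local normality of $\phi$ does not directly transfer to the cut-down functional. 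This is precisely why the paper proves Lemma \ref{lem:gen-KMSnormal} and devotes the first part of its proof to showing that $\phi_i$ is \emph{normal} on $B(\H_{\pi_i})$: one restricts $\phi$ to $C^*_{\locn}(\A_S^\gamma)$, where it is a locally normal KMS functional of a completely rational net, uses rotation covariance and the strip analyticity to deduce separability of the GNS representation and hence normality, and only then invokes the Gibbs classification. Inserting that normality step before your Jordan-decomposition argument repairs the proof; without it, the reduction to the known classification of KMS states on $B(\H)$ is not legitimate.
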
 

\begin{proof}
Let $\phi$ be an sKMS functional on $C^*_{\locn}(\A_S)$ (so by definition bounded and locally normal). Let us write $\phi_i$ for the restriction of $\phi$ to the $i$-th graded direct summand in $C^*_{\locn}(\A_S)$. Given the structure of $C^*_{\locn}(\A_S^\gamma)$ and $C^*_{\locn}(\A_S)$ determined above (in particular in the proof of Proposition \ref{prop:gen-ratCA}), it follows that $\phi$ restricts to a locally normal KMS functional on $C^*_{\locn}(\A_S^\gamma)$, whose GNS representation according to Lemma \ref{lem:gen-KMSnormal} is separable, thus the GNS representation of $\phi_i\restriction_{\pi_i(C^*(\A_S^\gamma))}$ is separable. As $\pi_i(C^*(\A_S^{(2)})) = \pi_i(C^*(\A_S^\gamma))\oplus \begin{pmatrix}
0 & \unit \\ \unit & 0
\end{pmatrix}\pi_i(C^*(\A_S^\gamma))$, it follows that the GNS representation of $\phi$ is separable, so $\phi_i$ on $\pi_i(C^*(\A_S^{(2)}))$ is normal. With polar decomposition as in the proof of \cite[Prop.4]{BL00} we find that $|\phi_i|$ is a positive normal KMS functional on $\pi_i(C^*(\A_S^{(2)}))=B(\H_{\pi_i})$ and hence a multiple of a Gibbs state, \ie  
of the form $\lambda_i\tr_{\H_{\pi_i}} (\cdot \rme^{-L_0^{\pi_i}})$ with some $\lambda_i>0$, \cf \cite[Sec.V.1]{Haag}.
Therefore $\phi_i= \lambda_i\tr_{\H_{\pi_i}} (v_i \cdot\rme^{-L_0^{\pi_i}})$ with $v_i$ a partial isometry in $B(\H_{\pi_i})$.

It remains to determine $v_i$. Letting $\hat{\Gamma}_{\pi_i}$ denote the grading unitary in the representation $\pi_i$ (which commutes with $\rme^{-L_0^{\pi_i}}$), the sKMS condition $(S_2)$ on $\phi_i$ together with cyclicity of the trace implies
\[
 \tr_{\H_{\pi_i}}(v_i x y  \rme^{-L_0^{\pi_i}}) = \tr_{\H_{\pi_i}} (v_i \hat{\Gamma}_{\pi_i} y\hat{\Gamma}_{\pi_i} \rme^{-L_0^{\pi_i}}x)
= \tr_{\H_{\pi_i}} (\hat{\Gamma}_{\pi_i} x v_i \hat{\Gamma}_{\pi_i} y \rme^{-L_0^{\pi_i}}), \quad x,y\in B(\H_{\pi_i}),
\]
so $\hat{\Gamma}_{\pi_i}v_i x\hat{\Gamma}_{\pi_i} = x v_i$. Choosing for $x$ \eg the standard system of matrix units in $B(\H_{\pi_i})$, we find $v_i=\pm\hat{\Gamma}_{\pi_i}$, so $\phi_i$ is a super-Gibbs functional.

Suppose instead $\pi\in\Delta$ is ungraded, then $\pi\gamma$ is a different ungraded irreducible representation which restricts to the same representation of $\A^\gamma$, and $\pi':=\pi\oplus\pi\gamma$ becomes a reducible graded representation with grading unitary 
$\hat{\Gamma}_{\pi'}=\begin{pmatrix} 0 & \unit \\ \unit & 0 \end{pmatrix}$.
Following then an analogous argument as above based on the sKMS condition and with the restriction of $\phi$ to $\pi'(C^*(\A_S^{(2)}))$, we get normality and find that the corresponding isometry $v_{\pi'}\in B(\H_{\pi'})$ must be $\hat{\Gamma}_{\pi'}$; but since this is traceless, it follows that $\phi$ vanishes on $\pi'(C^*(\A_S^{(2)}))$, hence on $\pi(C^*(\A_S^{(2)}))$ and likewise on all ungraded direct summands of $C^*_{\locn}(\A_S)$.

These two parts together prove that $\phi$ has the form \eqref{eq:S1-phi}.
The hyperplane structure and the condition on $\mu$ follow from the requirement that $\phi(\unit)=1$ which is possible since all $\tr_{\H_{\pi_i}}(\hat{\Gamma}_{\pi_i}  \rme^{-L_0^{\pi_i}}) $ are finite, and nonzero at least when $\pi_i$  is the vacuum representation. 

The converse is clear since every $\phi_\mu$ is a linear combination of super-Gibbs functionals. The fact that super-Gibbs functionals satisfy the conditions of bounded sKMS functionals is seen as in the case of KMS states, \cf \eg \cite[Sec.V.1]{Haag}.
\end{proof} 

We finally remark that, if $\A_S$ is actually superconformal, then the restriction of an sKMS functional to the summands in $C^*_{\locn}(\A_S)$ corresponding to Ramond general representations $\pi$ has a canonical superderivation, namely the one implemented by the supercharge $G_0^\pi$ in \cite[Sec.4]{CHKL}, verifying in fact properties $(S_4)$ and $(S_5)$. For the Neveu-Schwarz part this is never the case, \cf \cite[Sec.5]{CHKL}. The graded Ramond representations are therefore the ones of interest in the context of superderivations and sKMS functionals. This found an application \eg in \cite[Sec.4]{CHL} and other works.

\appendix

\section{A generalization of Araki's criterion on quasi-equivalence}\label{sec:Araki}

Let $\K$ be a complex Hilbert space with selfadjoint involution $\Gamma$ and use the notation for quasi-free states as at the beginning of Section \ref{sec:FF}, and write $L^1(\K)\subset B(\K)$ for the ideal of trace-class operators on $\K$. Recall that two representations $\pi_1,\pi_2$ of a C*-algebra $A$ are called quasi-equivalent if there is a *-isomorphism $\Psi:\pi_1(A)''\ra\pi_2(A)''$ such that $\Psi\circ\pi_1=\pi_2$. 

\begin{theorem}[Araki \cite{Ara}]
Given  two selfadjoint operators $R,S\in B(\K)$ such that $0\le R\le \unit, 0\le S \le \unit$ and $R+\Gamma R\Gamma = S+\Gamma S\Gamma=\unit$, assume that
\[
(S^{1/2}-R^{1/2})^2 \in L^1(\K). 
\]
Then the GNS representations $\pi_{\phi_R}$ and $\pi_{\phi_S}$ of the two quasi-free states $\phi_R$ and $\phi_S$ are quasi-equivalent, and we also say $\phi_R$ and $\phi_S$ are quasi-equivalent, denoted $\pi_1\simeq\pi_2$.\label{th:FF-Araki}
\end{theorem}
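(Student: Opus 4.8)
## Proof proposal

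The plan is to reduce the quasi-equivalence statement to the classical Powers–Størmer / Araki machinery for quasi-free states on the selfdual CAR algebra, carefully tracking the $\Gamma$-compatibility conditions. First I would recall that a quasi-free state $\phi_S$ on $\CAR(\K,\Gamma)$ is determined by its two-point operator $S\in B(\K)$ with $0\le S=S^*\le\unit$ and $S+\Gamma S\Gamma=\unit$ (equivalently $S=\unit-\Gamma S\Gamma$), and that the GNS triple $(\H_{\phi_S},\pi_{\phi_S},\xi_S)$ can be realized concretely on a Fock space built from $S$. The key structural fact I would invoke is that the central support of $\xi_S$ in $\pi_{\phi_S}(\CAR(\K,\Gamma))''$ is $\unit$ (cyclicity and separating behaviour of quasi-free GNS vectors), so quasi-equivalence of $\pi_{\phi_R}$ and $\pi_{\phi_S}$ is equivalent to the two representations being unitarily equivalent after amplification, which in turn is controlled by the relative position of the two Fock vacua.

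The heart of the argument is the following dichotomy, essentially due to Powers–Størmer and Araki: the quasi-free states $\phi_R$ and $\phi_S$ are quasi-equivalent if and only if the operator $S^{1/2}-R^{1/2}$ together with $(\unit-S)^{1/2}-(\unit-R)^{1/2}$ lies in the Hilbert–Schmidt class $L^2(\K)$. Using the constraints $R+\Gamma R\Gamma=\unit=S+\Gamma S\Gamma$, one has $(\unit-S)^{1/2}=\Gamma S^{1/2}\Gamma$ and $(\unit-R)^{1/2}=\Gamma R^{1/2}\Gamma$, so the second condition reads $\Gamma(S^{1/2}-R^{1/2})\Gamma\in L^2(\K)$, which is automatic once $S^{1/2}-R^{1/2}\in L^2(\K)$ because $\Gamma$ is isometric (antiunitary) and $L^2(\K)$ is $\Gamma$-stable. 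Therefore the whole criterion collapses to the single requirement $S^{1/2}-R^{1/2}\in L^2(\K)$. But $S^{1/2}-R^{1/2}$ is selfadjoint, so $S^{1/2}-R^{1/2}\in L^2(\K)$ is equivalent to $(S^{1/2}-R^{1/2})^2\in L^1(\K)$, which is exactly the hypothesis. Hence the steps in order are: (i) set up the Fock realization of the quasi-free GNS representations; (ii) record the $\Gamma$-identities $(\unit-S)^{1/2}=\Gamma S^{1/2}\Gamma$ etc.; (iii) quote the Powers–Størmer/Araki equivalence in the form ``$\pi_{\phi_R}\simeq\pi_{\phi_S}\iff S^{1/2}-R^{1/2}, (\unit-S)^{1/2}-(\unit-R)^{1/2}\in L^2(\K)$''; (iv) use (ii) to drop the second membership as redundant; (v) use selfadjointness to rewrite $L^2$-membership as the stated $L^1$-condition; (vi) construct the explicit $*$-isomorphism $\Psi$ intertwining the two representations, either by exhibiting the Fock-space unitary implementing the equivalence on the amplifications and descending it to the von Neumann algebras, or by citing the classical construction.

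The main obstacle I anticipate is step (iii)–(iv): making precise, in the \emph{selfdual} CAR framework with an involution $\Gamma$ (as opposed to the Fock CAR algebra $\mathrm{CAR}(\mathfrak{h})$ with a basis projection), exactly which Hilbert–Schmidt conditions appear, and verifying that the $\Gamma$-symmetry really does render one of them automatic rather than a genuinely separate constraint. One must be careful that $S$ and $\unit-S$ play symmetric roles only because of $S+\Gamma S\Gamma=\unit$, and that the quasi-equivalence criterion in the literature is typically stated for a basis projection $P$ with $\phi$ having two-point operator built from $P$; translating between the ``basis projection'' picture and the ``$0\le S\le\unit$ with $S+\Gamma S\Gamma=\unit$'' picture is where sign/adjoint bookkeeping can go wrong. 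A secondary, more routine point is checking that the GNS vectors have full central support so that quasi-equivalence of representations is the right notion (rather than, say, unitary equivalence), but this is standard for quasi-free states. Once the dictionary is fixed, the remaining manipulations — the identity $(\unit-S)^{1/2}=\Gamma S^{1/2}\Gamma$, the $\Gamma$-stability of $L^2(\K)$, and the passage from $T\in L^2$ to $T^2\in L^1$ for selfadjoint $T$ — are immediate, and the construction of $\Psi$ follows from the classical implementability result for Bogoliubov transformations satisfying the Shale–Stinespring (Hilbert–Schmidt) condition.
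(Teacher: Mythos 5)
The first thing to note is that the paper does not prove this statement at all: it is quoted verbatim as a classical theorem of Araki \cite{Ara}, so there is no in-paper argument to compare yours against. Judged on its own terms, your outline is the standard reduction and its formal manipulations are all correct: the identity $(\unit-S)^{1/2}=\Gamma S^{1/2}\Gamma$ does follow from $S+\Gamma S\Gamma=\unit$ together with uniqueness of positive square roots (since $\Gamma S^{1/2}\Gamma\ge 0$ and squares to $\Gamma S\Gamma$); conjugation by the antiunitary involution $\Gamma$ preserves singular values, so it preserves the Hilbert--Schmidt class; and for selfadjoint $T$ one has $T^2\in L^1(\K)$ iff $T$ is Hilbert--Schmidt, which converts the stated $L^1$-hypothesis into the $L^2$-condition $S^{1/2}-R^{1/2}\in L^2(\K)$. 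Your observation that the $\Gamma$-symmetry makes the second Powers--St{\o}rmer condition redundant is exactly why Araki's selfdual criterion involves only one operator.

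The weak point is your step (iii). In the selfdual framework, the equivalence ``$\pi_{\phi_R}\simeq\pi_{\phi_S}$ iff $S^{1/2}-R^{1/2}$ is Hilbert--Schmidt'' \emph{is} Araki's theorem; citing it and then massaging the hypotheses is circular as a proof of that theorem. The non-circular route is to start from the Powers--St{\o}rmer criterion for gauge-invariant quasi-free states on the ordinary CAR algebra and carry out the dictionary to general selfdual covariance operators $S$ with $S+\Gamma S\Gamma=\unit$ (via a basis projection / doubling construction), which is precisely the substantial part of Araki's paper and is the step you flag as ``the main obstacle'' but do not execute. Likewise, the construction of the intertwining isomorphism $\Psi$ in your step (vi) is the analytic heart of the sufficiency direction and is only gestured at via Shale--Stinespring. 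Since the paper itself treats this as an attributed black box, deferring these steps to \cite{Ara} is acceptable in context, but be aware that as a standalone proof your proposal establishes only the easy translation between the two formulations of the criterion, not the criterion itself.
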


Recall the following useful reformulation of \cite[Prop.10.3.13]{KR2}:

\begin{lemma}\label{lem:FF-qe-ql}
Let $\AA$ be a C*-algebra. If $\phi,\psi$ are two states on $\AA$ such that $\pi_\phi$ and $\pi_\psi$ are quasi-equivalent, then $\psi$ is normal in the GNS representation $\pi_\phi$, and vice versa. More precisely, there exists a normal state $\tilde{\psi}$ of $\pi_\phi(\AA)''$ such that $\psi=\tilde{\psi}\circ \pi_\phi$.
\end{lemma}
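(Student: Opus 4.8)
The plan is to unwind the definition of quasi-equivalence recalled above. Let $\Psi\colon\pi_\phi(\AA)''\ra\pi_\psi(\AA)''$ be a $*$-isomorphism with $\Psi\circ\pi_\phi=\pi_\psi$. By construction of the GNS triple $(\H_\psi,\pi_\psi,\xi_\psi)$, the state $\psi$ is the vector state $\psi(a)=\langle\xi_\psi,\pi_\psi(a)\xi_\psi\rangle$, $a\in\AA$; hence $\omega:=\langle\xi_\psi,\,\cdot\,\xi_\psi\rangle$ is a \emph{normal} state on the von Neumann algebra $\pi_\psi(\AA)''$ satisfying $\omega\circ\pi_\psi=\psi$. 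I would then set $\tilde{\psi}:=\omega\circ\Psi$ and check that for every $a\in\AA$ one has $\tilde\psi(\pi_\phi(a))=\omega(\Psi(\pi_\phi(a)))=\omega(\pi_\psi(a))=\psi(a)$, so that $\psi=\tilde\psi\circ\pi_\phi$; it then only remains to see that $\tilde\psi$ is normal on $\pi_\phi(\AA)''$.

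For the latter I would invoke the standard fact that every $*$-isomorphism between von Neumann algebras is automatically $\sigma$-weakly continuous (the $\sigma$-weak topology being an intrinsic invariant, determined by the predual), and therefore carries normal states to normal states; see \eg \cite{KR2} or \cite{Tak}. Thus $\tilde\psi=\omega\circ\Psi$ is normal, which is precisely the statement that $\psi$ is normal in the representation $\pi_\phi$. The ``vice versa'' is then immediate from the symmetry of quasi-equivalence: $\Psi^{-1}\colon\pi_\psi(\AA)''\ra\pi_\phi(\AA)''$ is again a $*$-isomorphism intertwining $\pi_\psi$ and $\pi_\phi$, so repeating the argument with the roles of $\phi$ and $\psi$ exchanged produces a normal state $\tilde\phi$ on $\pi_\psi(\AA)''$ with $\phi=\tilde\phi\circ\pi_\psi$.

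I do not anticipate any genuine difficulty here; the only point I would state carefully rather than take for granted is the automatic normality of $*$-isomorphisms of von Neumann algebras, since without it the composite $\omega\circ\Psi$ would only be known to be a state, not a normal one. Everything else is a routine composition of the GNS construction with the intertwiner $\Psi$. Alternatively, one may simply quote \cite[Prop.10.3.13]{KR2}, namely that quasi-equivalent representations share the same folium of normal states, together with the trivial observation that $\psi$ lies in the folium of its own GNS representation $\pi_\psi$; but the self-contained argument above seems preferable for the purposes of this paper.
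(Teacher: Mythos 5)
Your proof is correct; the one fact you rightly flag as needing care --- that a $*$-isomorphism of von Neumann algebras is automatically normal and hence carries the GNS vector state $\langle\xi_\psi,\cdot\,\xi_\psi\rangle$ to a normal state under composition with $\Psi$ --- is indeed the standard justification. The paper itself gives no proof, presenting the lemma only as a reformulation of \cite[Prop.10.3.13]{KR2}, which is exactly the alternative route you mention at the end.
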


The last part of this lemma makes also sense when $\psi$ is no longer a state. We may then ask whether it is also possible to adapt Theorem \ref{th:FF-Araki} to the case when $\psi=\phi_S$ is no longer a state. In a weak (but for our purposes sufficient) version this is in fact possible:

\begin{theorem}\label{prop:FF-comm2}
Given  two selfadjoint operators $R,S\in B(\K)$ such that $0\le R\le \unit$ and $R+\Gamma R\Gamma = S+\Gamma S\Gamma=\unit$, assume that
\[
(S-R) \in L^1(\K). 
\]
Then the quasi-free functional $\phi_{S}$ on $\CAR(\K,\Gamma)$ is bounded and normal in the GNS representation of the quasi-free state $\phi_R$.
\end{theorem}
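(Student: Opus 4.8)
The plan is to reduce the assertion to Araki's Theorem~\ref{th:FF-Araki}. The obstruction to using it directly is that $S$ need not satisfy $0\le S\le\unit$, so $\phi_S$ is a genuine functional rather than a state (it is bounded by the very definition of a quasi-free functional, so that part of the statement is already at hand), and one cannot form its GNS representation or speak of quasi-equivalence with $\phi_R$. So I would: (i) replace $S$ by a state-operator $\tilde S$ differing from $S$, and from $R$, by trace-class operators; (ii) dispose of the $\tilde S$--$R$ discrepancy by Theorem~\ref{th:FF-Araki} and Lemma~\ref{lem:FF-qe-ql}; (iii) absorb the remaining trace-class correction --- exploiting that it commutes with $\tilde S$ --- by a finite-rank approximation inside the GNS representation.

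For (i)--(ii): since $S-R$ is compact, $S$ has the same essential spectrum as $R$, which lies in $[0,1]$; hence $Q:=\chi_{\R\setminus[0,1]}(S)$ projects onto the closed span of the eigenvectors of $S$ with eigenvalue outside $[0,1]$, and testing $S-R\in L^1(\K)$ against an orthonormal family of such eigenvectors gives $\sum_i\operatorname{dist}(\mu_i,[0,1])\le\|S-R\|_1<\infty$. Setting $\tilde S:=S\chi_{[0,1]}(S)+\chi_{(1,\infty)}(S)$, from $\Gamma S\Gamma=\unit-S$ one checks $\Gamma\tilde S\Gamma=\unit-\tilde S$ and $0\le\tilde S\le\unit$, while the estimate above shows $K:=S-\tilde S\in L^1(\K)$ with $\Gamma K\Gamma=-K$, hence $\tilde S-R=(S-R)-K\in L^1(\K)$. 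By the Powers--St\o rmer inequality $\|\tilde S^{1/2}-R^{1/2}\|_2^2\le\|\tilde S-R\|_1<\infty$, i.e.\ $(\tilde S^{1/2}-R^{1/2})^2\in L^1(\K)$ (the companion condition for $\unit-\tilde S,\unit-R$ being automatic under $\Gamma$-conjugation), so Theorem~\ref{th:FF-Araki} yields $\pi_{\phi_{\tilde S}}\simeq\pi_{\phi_R}$. As normality of a functional is preserved under quasi-equivalence of representations (Lemma~\ref{lem:FF-qe-ql} and the remark after it), it now suffices to show that $\phi_S=\phi_{\tilde S+K}$ is normal in $\pi_{\phi_{\tilde S}}$.

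For (iii): the key point is that $\tilde S$ and $K$ are both Borel functions of $S$, hence commute. I would diagonalize $K=\sum_n\lambda_n\big(|e_n\rangle\langle e_n|-|\Gamma e_n\rangle\langle\Gamma e_n|\big)$ with $\lambda_n>0$, $\{e_n\}\cup\{\Gamma e_n\}$ orthonormal and $\sum_n\lambda_n=\tfrac12\|K\|_1<\infty$, put $V_N:=\operatorname{span}\{e_1,\Gamma e_1,\dots,e_N,\Gamma e_N\}$ (a $\Gamma$-invariant $2N$-dimensional subspace) and $K^{(N)}:=\sum_{n\le N}\lambda_n(|e_n\rangle\langle e_n|-|\Gamma e_n\rangle\langle\Gamma e_n|)$, so $\|K^{(N)}-K\|_1\to0$. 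Since $\tilde S$ commutes with $K$, it leaves $V_N$ and $V_N^\perp$ invariant, whence $\CAR(\K,\Gamma)\cong\CAR(V_N,\Gamma)\hat\otimes\CAR(V_N^\perp,\Gamma)$, $\phi_{\tilde S}=\phi_{\tilde S|_{V_N}}\hat\otimes\phi_{\tilde S|_{V_N^\perp}}$ and $\phi_{\tilde S+K^{(N)}}=\phi_{(\tilde S+K^{(N)})|_{V_N}}\hat\otimes\phi_{\tilde S|_{V_N^\perp}}$; as $\CAR(V_N,\Gamma)$ is finite-dimensional --- so every functional on it is normal --- and $\phi_{\tilde S|_{V_N^\perp}}$ is a state, $\phi_{\tilde S+K^{(N)}}$ is normal in $\pi_{\phi_{\tilde S}}$, with normal extension $\omega_N$ to $\M:=\pi_{\phi_{\tilde S}}(\CAR(\K,\Gamma))''$. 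Finally I would invoke a uniform estimate $\|\phi_{T_1}-\phi_{T_2}\|\le c(\|T_1\|,\|T_2\|)\,\|T_1-T_2\|_1$ for quasi-free functionals, obtained from the expansion \eqref{eq:FF-qfree}, to get $\phi_{\tilde S+K^{(N)}}\to\phi_{\tilde S+K}$ in norm; since restriction to the $\sigma$-weakly dense subalgebra $\pi_{\phi_{\tilde S}}(\CAR(\K,\Gamma))$ is isometric on the normal functionals of $\M$ (Kaplansky density), $(\omega_N)_N$ is Cauchy in the complete space $\M_*$, and its limit is a normal extension of $\phi_{\tilde S+K}=\phi_S$. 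Undoing the quasi-equivalence of step (ii) completes the proof.

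The hard part will be the last point of (iii): the trace-norm Lipschitz bound for $T\mapsto\phi_T$ with a constant independent of the dimension (the spaces $V_N$ grow with $N$), which is exactly what upgrades the algebraically trivial finite-rank statement into a statement about $\phi_S$ itself; a comparable-effort alternative is to build directly, from the trace-class datum $K$, an Araki--Wyss-type vector in $\H_{\phi_R}$ implementing $\phi_S$ as a vector functional (or a finite sum of such). The rest is routine once Theorem~\ref{th:FF-Araki}, the Powers--St\o rmer inequality and Lemma~\ref{lem:FF-qe-ql} are granted.
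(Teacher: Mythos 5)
Your steps (i)--(ii) are essentially the paper's own reduction: the truncation $\tilde S=S\chi_{[0,1]}(S)+\chi_{(1,\infty)}(S)$ is exactly the operator $X$ in the paper's proof, $K=S-\tilde S$ is its $Y$, the trace-class estimate for $K$ via testing $S-R$ against eigenvectors is the paper's step (2), and the passage $\phi_{\tilde S}\simeq\phi_R$ via Powers--St{\o}rmer and Theorem \ref{th:FF-Araki} is its step (3). The problem is step (iii), which is where all the analytic content of the theorem sits, and your proposed tool there does not work. The claimed bound $\|\phi_{T_1}-\phi_{T_2}\|\le c(\|T_1\|,\|T_2\|)\,\|T_1-T_2\|_1$ with a constant depending only on \emph{operator} norms is false: take $T_1=X$ a state-operator with $Xe_n=e_n$, $X\Gamma e_n=0$ for $n\le N$, and $T_2=X+\tfrac12\sum_{n\le N}(|e_n\rangle\langle e_n|-|\Gamma e_n\rangle\langle\Gamma e_n|)$; on the norm-one projection $a=\prod_{n\le N}F(e_n)^*F(e_n)$ one computes $\phi_{T_1}(a)=1$ and $\phi_{T_2}(a)=(3/2)^N$, so the left side grows exponentially in $N$ while $\|T_1-T_2\|_1=N$ and $\|T_2\|=3/2$ is fixed. (The same computation shows that $\phi_S$ is \emph{not} bounded ``by the very definition of a quasi-free functional'': an operator $S$ with infinitely many eigenvalues $\ge 1+\eps$ gives an unbounded $\phi_S$; boundedness is a genuine conclusion of the theorem, available only because the excess $K$ is trace-class.) Nor can any usable estimate be ``obtained from the expansion \eqref{eq:FF-qfree}'' term by term: the $2n$-point function is a sum of $(2n-1)!!$ pairings, each of size $\sim\|T\|^n\prod\|f_i\|$, while the monomial has norm $\le\prod\|f_i\|$, so termwise bounds diverge factorially --- even $\|\phi_R\|=1$ for a state is invisible at that level and comes from positivity.

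A correct local version of your estimate does exist (with a constant of the form $e^{c\|K\|_1}$, depending on trace norms of the excess parts), and it would make your finite-rank approximation scheme go through; but proving it is precisely the hard step, and it is what the paper does differently. Instead of approximating, the paper dominates: it builds a second truncation $Y'$ of $Y$ with spectrum in $[-1/2,1/2]$ so that $\phi_{X-Y'}$ is a quasi-free \emph{state} quasi-equivalent to $\phi_R$, constructs a completely positive projection $\eta$ of a dense subalgebra onto the abelian algebra generated by the occupation numbers $F(e_j)^*F(e_j)$ that leaves both $\phi_{X+Y}$ and $\phi_{X-Y'}$ invariant, and then proves by an explicit eigenvalue product computation (using $(1+\lambda)\le e^{4\lambda}(1-\lambda)$ for $\lambda\le 1/2$ and $\sum_j\lambda_j<\infty$) that $|\phi_{X+Y}(a)|\le c\,\phi_{X-Y'}(a)$ for positive $a$. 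This single inequality delivers boundedness and normality simultaneously. Your alternative suggestion (an Araki--Wyss-type implementing vector) is also plausible but is likewise not carried out. As it stands, the proposal correctly identifies the reduction but leaves the decisive estimate unproved and, in the form stated, unprovable.
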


We shall use and prove Theorem \ref{prop:FF-comm2} in the following equivalent formulation:

\begin{theorem}\label{prop:FF-comm}
Given  two selfadjoint operators $T\in L^1(\K)$ and $R\in B(\K)$ such that
\[
 T + \Gamma T \Gamma =0, \quad R+ \Gamma R \Gamma = \unit, \quad 0\le R\le \unit.
\]
Then the quasi-free functional $\phi_{R+T}$ on $\CAR(\K,\Gamma)$ is bounded and normal in the GNS representation of the quasi-free state $\phi_R$.
\end{theorem}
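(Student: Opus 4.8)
The plan is to reduce everything to the known quasi-equivalence result of Araki, Theorem~\ref{th:FF-Araki}, by a perturbation argument in $L^1(\K)$, and then transport the resulting normality from the state case to the functional case. First I would observe that it suffices to treat the case of small perturbations: writing $T$ as a norm-limit of finite-rank selfadjoint operators with $T_n+\Gamma T_n\Gamma=0$ (truncating the spectral decomposition of $T$ and symmetrizing, using $\Gamma T\Gamma=-T$), one reduces — after a finite telescoping — to showing the statement when $\|T\|_{L^1(\K)}$ is as small as we like, in particular small enough that $0\le R+T\le\unit$ fails only mildly; but since $\phi_{R+T}$ is only required to be a bounded \emph{functional}, not a state, positivity of $R+T$ is not needed for $\phi_{R+T}$ to be well-defined as a bounded quasi-free functional on $\CAR(\K,\Gamma)$, as recalled at the start of Section~\ref{sec:FF}. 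So the real content is normality of $\phi_{R+T}$ in the GNS representation $\pi_{\phi_R}$.

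Next I would decompose $\phi_{R+T}$ into a finite linear combination of quasi-free \emph{states} to which Theorem~\ref{th:FF-Araki} and Lemma~\ref{lem:FF-qe-ql} apply directly. Concretely: write $T=T_+-T_-$ with $T_\pm\ge0$, $T_\pm\in L^1(\K)$, and $\Gamma T_\pm\Gamma=T_\mp$ (this symmetry forces $\|T_+\|=\|T_-\|$ and lets us pair them up). For suitable small scalars and a suitable choice one can find selfadjoint operators $S_1,\dots,S_N$ with $0\le S_k\le\unit$, $S_k+\Gamma S_k\Gamma=\unit$, each satisfying $S_k-R\in L^1(\K)$, and scalars $c_k$ with $\sum_k c_k=1$, such that $R+T=\sum_k c_k S_k$ — for instance by choosing the $S_k$ to be $R$ perturbed along the rank-one pieces of $T$, rescaled back into $[0,\unit]$ by absorbing the deficit into the coefficients $c_k$ (which are then allowed to be negative, which is fine). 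Each $\phi_{S_k}$ is then a genuine quasi-free state with $(S_k^{1/2}-R^{1/2})^2\in L^1(\K)$ — here I use that $S_k-R\in L^1$ together with the operator-Lipschitz estimate for the square root on $[0,\unit]$ relative to $R$, or more simply the implication $S_k-R\in L^1(\K)\Rightarrow (S_k^{1/2}-R^{1/2})^2\in L^1(\K)$ which holds because $(S_k^{1/2}-R^{1/2})^2 = S_k+R - S_k^{1/2}R^{1/2}-R^{1/2}S_k^{1/2}$ and each cross term differs from $R$ by a trace-class operator. Hence by Theorem~\ref{th:FF-Araki} each $\pi_{\phi_{S_k}}$ is quasi-equivalent to $\pi_{\phi_R}$, so by Lemma~\ref{lem:FF-qe-ql} each $\phi_{S_k}$ is normal in $\pi_{\phi_R}$, say $\phi_{S_k}=\tilde\phi_{S_k}\circ\pi_{\phi_R}$ with $\tilde\phi_{S_k}$ normal on $\pi_{\phi_R}(\CAR(\K,\Gamma))''$. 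Then $\phi_{R+T}=\big(\sum_k c_k\tilde\phi_{S_k}\big)\circ\pi_{\phi_R}$ exhibits $\phi_{R+T}$ as (the pullback of) a normal — possibly non-positive — bounded functional, which is exactly the claim.

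The main obstacle I anticipate is the bookkeeping in the second step: arranging the decomposition $R+T=\sum_k c_k S_k$ with \emph{each} $S_k$ simultaneously (i) selfadjoint, (ii) in $[0,\unit]$, (iii) satisfying the $\Gamma$-symmetry $S_k+\Gamma S_k\Gamma=\unit$, and (iv) differing from $R$ by a trace-class operator, while keeping the $c_k$ summing to $1$. The symmetry constraint (iii) is the delicate one, since naive rescalings break it; the fix is to always perturb $R$ by operators $B$ with $\Gamma B\Gamma=-B$ (so $(R+B)+\Gamma(R+B)\Gamma=\unit$ automatically), to split $T$ into such pieces, and to push any positivity violation not into the operators but into the scalar coefficients. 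One should also double-check the operator-square-root trace-class estimate; an alternative that sidesteps square roots entirely is to prove the equivalent Theorem~\ref{prop:FF-comm} directly by comparing the quasi-free functional $\phi_{R+T}$ with the state $\phi_R$ through the explicit Bogoliubov/Fock picture, showing that the density matrix implementing $\phi_{R+T}$ relative to $\pi_{\phi_R}$ is trace-class precisely because $T\in L^1(\K)$ — but the route through Araki's theorem is shorter and is the one I would write up.
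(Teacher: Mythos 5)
Your strategy has a fatal gap at its core: the assignment $S\mapsto\phi_S$ is \emph{not} affine in the covariance operator, so a decomposition $R+T=\sum_k c_k S_k$ with $\sum_k c_k=1$ does \emph{not} yield $\phi_{R+T}=\sum_k c_k\,\phi_{S_k}$. The two-point function is indeed linear in $S$, but by \eqref{eq:FF-qfree} the $2n$-point functions are degree-$n$ polynomials in the two-point function: already for the four-point function, $\phi_S(F(f_1)F(f_2)F(f_3)F(f_4))$ is a signed sum of \emph{products} of two two-point functions, i.e.\ quadratic in $S$, and a quadratic expression evaluated at an affine combination is not the affine combination of its values. So even if you arrange the bookkeeping of (i)--(iv) perfectly, the identity $\phi_{R+T}=\big(\sum_k c_k\tilde\phi_{S_k}\big)\circ\pi_{\phi_R}$ in your second step is simply false, and the same nonlinearity invalidates the preliminary telescoping reduction to small $\|T\|_1$. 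A secondary, more minor issue: your justification of $S_k-R\in L^1(\K)\Rightarrow(S_k^{1/2}-R^{1/2})^2\in L^1(\K)$ via ``each cross term differs from $R$ by a trace-class operator'' is not a proof (the square root is not operator Lipschitz in general); the correct tool is the Powers--St{\o}rmer inequality $\|x^{1/2}-y^{1/2}\|_2^2\le\|x-y\|_1$, which is also what the paper uses.

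For comparison, the paper cannot and does not decompose $\phi_{R+T}$ into quasi-free states. Instead it truncates the spectrum of $S=R+T$ at $0$ and $1$ to produce $X$ with $0\le X\le\unit$, $X+\Gamma X\Gamma=\unit$ and $X-R\in L^1(\K)$, together with a trace-class remainder $Y$; it introduces an auxiliary genuine quasi-free \emph{state} $\phi_{X-Y'}$, shown quasi-equivalent to $\phi_R$ via Powers--St{\o}rmer and Araki's criterion; and then it proves a \emph{domination} estimate $|\phi_{X+Y}(a)|\le c\,\phi_{X-Y'}(a)$ for positive $a$ in a dense subalgebra, exploiting the fact that both quasi-free functionals factorize as product functionals over an eigenbasis of the trace-class perturbation (reducing the comparison to a commutative tensor-product subalgebra via a conditional expectation). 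Normality and boundedness of $\phi_{R+T}$ then follow from domination by a normal state, not from linearity. If you want to salvage your approach, you would have to replace the linear-combination step by some such domination or density-matrix argument --- which is essentially your undeveloped ``alternative'' at the end, and is where all the real work lies.
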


\begin{proof}
(1) \emph{Decomposition of $\K$}. Let $S := R+T\in B(\K)$;  it is selfadjoint and thus gives rise to a decomposition of $\K$ into spectral subspaces: letting 
\[
P_0 := \chi_{(-\infty,0)}(S), \quad P_1 := \chi_{[0,1]}(S),\quad P_2 := \chi_{(1,\infty)}(S),
\]
the projections $P_i$ are mutually orthogonal and $P_0+P_1+P_2=\unit$. We set $\K_i:= P_i\K$, so $\K= \K_0\oplus \K_1\oplus \K_2$ and split $S$ accordingly into the sum of
\[
S_0:=SP_0 \le0,\quad  0 \le S_1:=S P_1 \le P_1,\quad P_2 < S_2:=S P_2.
\]
The operator $\tilde{S}:=\Gamma S \Gamma = \unit-S$ has the same spectrum as $S$ and gives therefore rise again to spectral projections $\tilde{P}_i=\Gamma P_i \Gamma$. We find
\[
\Gamma P_0 \Gamma = \chi_{(-\infty,0)}(\Gamma S \Gamma) = \chi_{(-\infty,0)}(\unit - S) =  \chi_{(1,\infty)}(S) = P_2
\]
and analogously $\Gamma P_1 \Gamma = P_1$, so $\Gamma\K_0= \K_2$ and $\Gamma \K_1=\K_1$. Moreover,
\[
\tilde{S}_2 + S_0 = P_0 , \quad \tilde{S}_1 + S_1 = P_1 ,\quad  \tilde{S}_0 + S_2 = P_2 .
\] 

We set
\[
X:= S_1 + P_2,\quad Y:= S_0+ S_2-P_2 = S-(S_1+P_2)
\]
and note that they are selfadjoint, $0\le X \le \unit$, $\Gamma X \Gamma = \unit-X$ and $\Gamma Y \Gamma= -Y$. $X$ may be regarded as a ``truncation of $S$ between 0 and 1".

(2) \emph{Claim: we have}
\begin{equation}\label{eq:FF-main2}
X - R \in L^1(\K) \quad \textrm{and} \quad Y \in L^1(\K). 
\end{equation}
Since $X-R-T=-Y$ and $T\in L^1(\K)$, it suffices to show $Y\in L^1(\K)$. Moreover, since $\Gamma S_0\Gamma = P_2-S_2$, it suffices to show $S_0\in L^1(\K)$. We have
\begin{align*}
0\ge& S_0=  P_0(R+T)P_0 = P_0RP_0 + P_0TP_0,
\end{align*}
where the first term on the RHS is positive and the second one therefore negative; in other words
\[
 0 \le P_0RP_0 \le -P_0 T P_0 \in L^1(\K),
\]
so $P_0RP_0$ being dominated by a trace-class element is trace-class, too, and so is $S_0= P_0 R P_0 + P_0 T P_0$.

Since $S_0$ and $S_2-P_2$ are selfadjoint and trace-class, they can be assumed to be diagonal in a certain fixed basis, and thus also $S_2$. Write $S_0'$ for the diagonal operator which coincides with $S_0$ on the spectral subspaces of spectrum $\subset [-1/2,0]$ and equals $(-1/2)\unit$ on the (finite-dimensional) orthogonal complement in $\K_0$. Analogously define $S_2'-P_2$ and $Y':= S_0' + S_2'-P_2$, which has spectrum in $[-1/2,1/2]$. Clearly they are again trace-class and $X-Y'$ gives rise to a quasi-free state on $\CAR(\K,\Gamma)$.

(3) \emph{Claim:
\begin{equation}\label{eq:FF-main3}
\phi_{X-Y'} \simeq \phi_{X} \simeq \phi_R.
\end{equation}
In particular, $\phi_{X-Y'}$ is a state and normal with respect to the representation defined by $\phi_R$ according to Lemma \ref{lem:FF-qe-ql}.}
In order to show the quasi-equivalence of the quasi-free states, we just have to check Araki's criterion in the present case. We have
\[
 \|(X-Y')^{1/2}-X^{1/2}\|_2^2 \le \| (X- Y') - X \|_1 = \|Y'\|_1 < \infty,
\]
using (2) in the last step and the Powers-Størmer inequality in the first step, which says
\[
\|x^{1/2}-y^{1/2}\|_2^2 \le \|x-y\|_1,
\]
for all $x,y\in B(\K)_+$, \cite[Lem.4.1]{PS70}. So $\phi_{X-Y'} \simeq \phi_{X}$. A similar reasoning shows $\phi_{X}\simeq \phi_R$.

(4) \emph{Eigenspaces of $Y$.} Write $P_{02}:= P_0+P_2$ and consider the orthonormal basis $(e_j)_{j\in J}$ of $P_{2}\K$ of eigenvectors of $S_2-P_2$, with eigenvalues $\lambda_j>0$. Then $\{e_j,\Gamma e_j:\; j\in J\}$ forms an orthonormal basis for $P_{02}\K$ of eigenvectors of $Y$ with eigenvalues $\pm \lambda_j$, respectively. Clearly they are also eigenvectors of the truncation $Y'$. We remark that $P_0$ and $P_2$ are basis projections on $P_{02}\K$ in the sense of \cite[Sec.2]{Ara}.

(5) \emph{Reduction to subalgebras}. We have to show that $\phi_{X+Y}$ is bounded and that it is normal in $\pi_{\phi_R}$ or, equivalently according to (3), in $\pi_{\phi_{X-Y'}}$. To this end, it suffices to show that there is a constant $c>0$ such that
\begin{equation*}
|\phi_{X+Y}(x)| \le c \cdot \phi_{X-Y'}(x), \quad x\in \CAR(\K,\Gamma)_+ .
\end{equation*}
Notice that the  *-algebra
\[
B:= \salg\{ F(f) : f\in \{e_j,\Gamma e_j:j\in J\} \cup \K_1 \}
\]
is dense in $\CAR(\K,\Gamma)$, so $B_+\subset \CAR(\K,\Gamma)_+$ is dense, and it suffices to show
\begin{equation}\label{eq:FF-Araki-normality}
|\phi_{X+Y}(b)| \le c\cdot \phi_{X-Y'}(b), \quad b\in B_+ .
\end{equation}
We define the  *-subalgebra
\[
A:= \salg \{F(e_j)^*F(e_j), F(f) : j\in J, f\in \K_1 \}.
\]
From the canonical anticommutation relations one deduces that $A$ is isomorphic to the algebraic tensor product 
\[
\Big(\bigodot_{j\in\N} \salg\{F(e_j)^*F(e_j)\}\Big) \odot \Big(\salg\{F(f):f\in\K_1\}\Big)
\simeq (\C^2)^{\odot \N} \odot \salg\{F(f):f\in\K_1\},
\]
while $B$ as a vector space is isomorphic to the tensor product of all $\salg\{F(e_j)\}\simeq \Mat_2(\C)$ and $\salg\{F(f):f\in\K_1\}$, and $\phi_{X+Y}\restriction_B$ and $\phi_{X-Y'}\restriction_B$ split as product functionals according to this tensor product decomposition of $B$. 

We would like to construct a projection from $B$ onto the subspace $A\subset B$ compatible with this factorization. Fix the orthogonal projections $p_{j,0}:= F(e_j)^*F(e_j)$ and $p_{j,1}:= F(e_j)F(e_j)^*=\unit-p_{j,0}$ in $\salg\{F(e_j)^*F(e_j)\}$. Given any $b\in B$, we can always write it as a finite sum of elements of the form $b_1 b_2 \cdots  b_n b'$ owing to the tensor product property, with some finite $n$ and with $b_j\in \salg\{F(e_j)\}$ and $b'\in \salg\{F(f):f\in\K_1\}$. For all $n\in\N$, we then define
\[
\eta(b_1b_2 \cdots  b_n b') := \sum_{k \in \{0,1\}^n} p_{1,k_1}\cdots p_{n,k_n} (b_1 b_2 \cdots b_n b') p_{n,k_n}^*\cdots p_{1,k_1}^*.
\]
It is obvious from this definition that $\eta$ extends to a completely positive map on $B$, that all $p_{j,k}$ mutually commute, and
\[
\eta(b_1 b_2 b_3 \cdots b_n b') = \prod_{j=1}^n (p_{j,0} b_j p_{j,0} + p_{j,1} b_j p_{j,1}) \cdot b'= \eta(b_1)\eta(b_2) \eta(b_3) \cdots  \eta(b_n) \eta(b'),
\]
so $\eta$ factorizes according to the tensor product decomposition, and on every single component it is a projection, thus $\eta^2=\eta$ on all of $B$ with $\eta(B)=A$. Now, letting for a moment $\phi$ stand for either of the two $\phi_{X+Y}\restriction_B$ or $\phi_{X-Y'}\restriction_B$, we have
\begin{align*}
\phi\big(\eta\big(  \mu_1 F(e_j)^*F(e_j) &+ \mu_2 F(e_j) +\mu_3 F(e_j)^* +\mu_4 F(e_j)F(e_j)^* \big)\big)\\
=& \phi \big(\mu_1 F(e_j)^*F(e_j) +\mu_4 F(e_j)F(e_j)^* \big)\\
=&\phi \big( \mu_1 F(e_j)^*F(e_j) +\mu_2 F(e_j) +\mu_3 F(e_j)^* +\mu_4 F(e_j)F(e_j)^* \big),
\end{align*}
for all $\mu_i\in\C$ and $j\in\N$, and, since $\phi$ splits as a product functional,
\begin{align*}
\phi\eta(b_1 b_2 \cdots b_n b') =& \phi(\eta(b_1)\eta(b_2) \cdots \eta(b_n)  \eta(b'))
= \phi\eta(b_1) \phi\eta(b_2) \cdots  \phi\eta(b_n) \phi\eta(b')\\
=&\phi(b_1) \phi(b_2) \cdots \phi(b_n) \phi(b') =\phi(b_1 b_2 \cdots b_n b') . 
\end{align*}
Therefore $\phi\eta=\phi$ and we can write $b= \eta(b) + k$ with $k:=b-\eta(b) \in\ker(\phi)= \ker(\phi_{X+Y}) \cap \ker(\phi_{X-Y'})$. Now since $\phi_{X+Y}\circ \eta = \phi_{X+Y}$ and $\phi_{X-Y'}\circ \eta = \phi_{X-Y'}$ on $B$ and $\eta$ is positive, it suffices to check boundedness and \eqref{eq:FF-Araki-normality} for $x\in A_{+}$.

Finally, write $A=A_{02}\odot A_1$, with the commutative *-algebra $A_{02}:= \salg\{ F(e_j)^* F(e_j) :j\in J \}$, so
\[
\phi_{X+Y}\restriction_A = \phi_{P_{02}(X+Y)}\restriction_{A_{02}} \otimes  \phi_{P_1(X+Y)}\restriction_{A_1}, \quad 
\phi_{X-Y'}\restriction_A = \phi_{P_{02}(X-Y')}\restriction_{A_{02}} \otimes  \phi_{P_1(X-Y')}\restriction_{A_1}.
\]
Since $P_1(X+Y)= P_1(X-Y')$, which gives rise to a state, $\phi_{X+Y}\restriction_A$ and $\phi_{X-Y'}\restriction_A$ coincide on the second factor $A_1$. Thus we just have to check that $\phi_{P_{02}(X+Y)}\restriction_{A_{02}}$ is bounded and satisfies \eqref{eq:FF-Araki-normality} for $b\in A_{02,+}$, namely:

(6) \emph{Claim: there is a constant $c>0$ such that}
\begin{equation}\label{eq:FF-main4}
|\phi_{P_{02}(X+Y)}(a)| \le c |\phi_{P_{02}(X-Y')}(a)|, \quad a\in A_{02,+}.
\end{equation}
We prove this on monomials of the form
\[
a= p_{k_1,0}\cdots p_{k_n,0} p_{k_{n+1},1}\cdots p_{k_{n+m},1},
\]
which implies that it holds for arbitrary elements in $A_{02,+}$, which can always be written as linear combinations of such monomials with positive coefficients due to commutativity of $A_{02}$.
  
Let $J_2(a)=\{k_1,...,k_n\}$ and $J_0(a)=\{k_{n+1},...,k_{n+m}\}$ be the set of indices $j$ occurring in the product expansion of $a$; let $J_{0}'(a)\subset J_{0}(a)$ and $J_{2}'(a)\subset J_{2}(a)$ be the subsets of those $j$ with $0<\lambda_j<1/2$. Moreover, independently of $a$, define the constants $c_+:= \prod_{j\in J:\lambda_j\ge 1/2} (1+\lambda_j)^2$ and $c_-:= \prod_{j\in J:\lambda_j\ge 1/2} 2^2$, which are finite since the products contain only finitely many factors (owing to the trace-class condition on $Y$). With these premises, we get
\begin{align*}
|\phi_{X+Y}(a)| 
=& \Big|\prod_{j\in J_0(a)}  (-\lambda_j) \cdot \prod_{j\in J_2(a)} (1+\lambda_j) \Big|
\le c_+ \prod_{j\in J_0'(a)}  \lambda_j \cdot \prod_{j\in J_2'(a)} (1+\lambda_j)\\
\le& c_+ \prod_{j\in J_0'(a)}  \lambda_j \cdot \rme^{2\tr(|Y|)} \prod_{j\in J_2'(a)} (1-\lambda_j)
\le \rme^{2\tr(|Y|)} c_+ c_- \phi_{X-Y'}(a).
\end{align*}
The one but last inequality is true because $(1+\lambda)\le \rme^{4\lambda} (1-\lambda)$, for all $0\le\lambda \le 1/2$, and $4\sum_{j\in J} \lambda_j = 2\tr (|Y|) < \infty$. Setting $c:=  \rme^{2\tr(|Y|)} c_+ c_-$, claim (6) is proved.

(7) \emph{Conclusion.} According to (3), $\phi_{X-Y' }$ and $\phi_R$ are relatively normal. On the other hand, (5) and (6) show that $\frac{1}{c}\phi_{X+Y}$ is normal with respect to $\phi_{X-Y'}$ and bounded by the latter on $B_+$, so that it extends to a bounded functional on $\CAR(\K,\Gamma)$ which is normal with respect to $\phi_{X-Y'}$. Thus $\phi_S= \phi_{X+Y}$ is normal in the GNS representation of $\phi_R$.
\end{proof}

\section{Some technical proofs}\label{sec:proofs}

In the following proofs, we keep the notation used in the corresponding theorems and sections.

\begin{proofof}[Proposition \ref{prop:gen-alpha-inv}]
$(S_2)\Rightarrow (S_2')$. Given $x,y\in\AA_{\alpha,\phi}$, define
\[
H_{x,y}(t) := \phi(x \alpha_t(y))- F_{x,y}(t), \quad t\in \T^1.
\]
Then $H_{x,y}$ is continuous on $\T^1$, analytic on the interior of $\T^1$, and $0$ on $\R$, so $H_{x,y}=0$ owing to the edge-of-the-wedge theorem \cite[Prop.5.3.6]{BR2}. This implies in particular
\[
\phi(x\alpha_{\rmi}(y)) = F_{x,y}(\rmi) = \phi(y \gamma(x))
\]
and $|\phi(x\alpha_t(y))|=|F_{x,y}(t)| \le C_0 (1+|\Re(t)|)^{p_0}, \quad t\in\T^1$.

$(S_2') \Rightarrow (S_2)$. By definition of $\AA_{\alpha,\phi}$, for every $x,y\in\AA_{\alpha,\phi}$, the function
\begin{equation}\label{eq:gen-Fxy}
F_{x,y}(t) :=  \phi(x \alpha_t(y)), \quad t\in \T^1,
\end{equation}
satisfies the conditions in $(S_2)$, for every $x,y\in\AA_{\alpha,\phi}\cap \A(I)$. Consequently, also $(S_7)$ and $(S_8)$ hold on $\AA_{\alpha,\phi}$ so far, \cf \cite[Prop.5.3]{BG}.

In order to prove \eqref{eq:gen-S2a} on $\dom(\phi)_c$ assuming $(S_6)$, we have to work a bit more. Given arbitrary $x,y\in\A(I)$ with $\|x\|=\|y\| =1$, choose sequences $x_n,y_n\in \AA_{\alpha,\phi}\cap \A(I)$ such that 
\begin{equation}\label{eq:gen-sKMS2}
\|x_n\|,\|y_n\|\le 1, \quad x_n\ra x,   \quad y_n\ra y \quad (\sigma^*\textrm{-strongly}),\quad n\ra \infty,
\end{equation}
which is possible due to Kaplansky's density theorem if $\AA_{\alpha,\phi}\cap \A(I)\subset \A(I)$ is $\sigma$-weakly dense. 
Define \begin{equation}\label{eq:gen-Gxy}
G_{x,y}(t) := \rme^{- 4C_2(1+|I|)t^2} \phi(x \alpha_t(y)), \quad t\in \T^1,
\end{equation}
and analogously $G_{x_n,y_n}$. They are continuous and bounded on $\T^1$ owing to the exponential damping factor $(S_6)$ and local normality of $\phi$, and all $G_{x_n,y_n}$ moreover are analytic on the interior of $\T^1$ by definition of $\AA_{\alpha,\phi}$.

According to Phragmen-Lindelöf's three-line theorem \cite[Prop.5.3.5]{BR2}, all $|G_{x_n,y_n}-G_{x_m,y_m}|$, with $m,n\in\N$, attain their maximum on $\partial \T^1$. Moreover
owing to condition $(S_6)$ they tend to zero uniformly for large $t$ because on $\partial \T^1$ we have
\[
|G_{x_n,y_n}(t)| \le |\rme^{-4C_2 (1+|I|)t^2}|\, C_1 \rme^{4C_2 (1+|I|)^2} \rme^{2C_2 (1+|I|)|t|^2} \|x_n\| \|y_n\|, \quad t\in \R\cup (\R+\rmi),
\]
with $C_1,C_2>0$ as in $(S_6)$ depending only on $I$ and $\phi$, not on $x_n,y_n,t$, as is shown in detail in \cite[(3.3)]{Hil2}, or with some additional effort \cite[p.742]{BG}. For any $\eps>0$, choose $I_\eps\in\I$ symmetric around $0$ and sufficiently large such that $|I_\eps|>1$ and $C_1\rme^{4C_2 (1+|I|)^2}\rme^{-2C_2 (1+|I|)|I_\eps|^2}<\eps/4$ and such that $I\subset I_\eps$. Then, for every $t\in\T^1$:
\begin{equation}\label{eq:gen-sKMS3}
\begin{aligned}
|G_{x_n,y_n}(t) & -  G_{x_m,y_m}(t)| \le 
\max \{ \sup_{s\in \R} \rme^{-4 C_2(1+|I|) s^2} |\phi(x_n \alpha_s(y_n)) -  \phi(x_m \alpha_s(y_m))| , \\
& \sup_{s\in \R} |\rme^{-4 C_2(1+|I|) (s+\rmi)^2}|\, |\phi(\alpha_s(y_n)\gamma(x_n)) -  \phi(\alpha_s(y_m)\gamma(x_m))| \} \\
\le & \eps/2 + \rme^{4C_2(1+|I|)}\max \{\sup_{s\in I_\eps} |\phi((x_n-x_m) \alpha_s(y_n))| + \sup_{s\in I_\eps} |\phi(\alpha_{-s}(x_m)(y_n-y_m))|, \\
&\sup_{s\in I_\eps} |\phi((y_n-y_m) \alpha_{-s}\gamma(x_n))| + \sup_{s\in  I_\eps} |\phi(\alpha_s\gamma(y_m)(x_n-x_m))|\}\\
\le &  \eps/2 + \rme^{4C_2(1+|I|)}\max\Big\{ \sup_{s\in I_\eps} |\phi_{2I_\eps}|((x_n-x_m) \alpha_s(y_n)\alpha_s(y_n)^*(x_n-x_m)^*)^{1/2} \\
&\quad + \sup_{s\in I_\eps} |\phi_{2I_\eps}|((y_n-y_m)^*\alpha_{-s}(x_m)^*\alpha_{-s}(x_m)(y_n-y_m))^{1/2}, \\
&\sup_{s\in I_\eps} |\phi_{2 I_\eps}|((x_n-x_m)^* \alpha_s\gamma(y_n)^*\alpha_s\gamma(y_n)(x_n-x_m))^{1/2} \\
&\quad+ \sup_{s\in  I_\eps} |\phi_{2I_\eps}|((y_n-y_m)\alpha_{-s}\gamma(x_m)\alpha_{-s}\gamma(x_m)^*(y_n-y_m)^*)^{1/2} \Big\}\\
\le & \eps/2 +  \rme^{4C_2(1+|I|)}\max \Big\{ \|y_n \| |\phi_{2I_\eps}|((x_n-x_m)(x_n-x_m)^*)^{1/2}\\
 &\quad + \|x_m\| |\phi_{2I_\eps}|((y_n-y_m)^*(y_n-y_m))^{1/2}, \\
& \|y_n \| |\phi_{2I_\eps}|((x_n-x_m)^*(x_n-x_m))^{1/2}\\
&\quad + \|x_m\| |\phi_{2I_\eps}|((y_n-y_m)(y_n-y_m)^*)^{1/2} \Big\},
\end{aligned}
\end{equation}
where we made use of $(S_7)$ and $(S_8)$, which held on $\AA_{\alpha,\phi}\cap \A(I)$ so far. Combining this with \eqref{eq:gen-sKMS2} and the normality $(S_1)$ of $|\phi_{2 I_\eps}|$, we see that the second summand is less than $\eps/2$ for $n,m$ sufficiently large, and the whole RHS is independent of $t\in\T^1$; in other words $G_{x_n,y_n}$ converges uniformly to a bounded continuous function $G$ on $\T^1$. According to Weierstrass' convergence theorem (\eg \cite[8.4.1]{Rem}), $G$ is actually analytic on the interior of $\T^1$. Moreover, since pointwise, for every $t\in\R$, 
\[
G_{x_n,y_n}(t) \ra \rme^{- 4C_2(1+|I|)t^2} \phi(x \alpha_t(y)),\quad G_{x_n,y_n}(t+\rmi) \ra \rme^{- 4C_2(1+|I|)(t+\rmi)^2} \phi(\alpha_t(y)\gamma(x)),
\]
as $n\ra\infty$, and these values define a continuous function on $\partial \T^1$, it coincides with $G$ there. Hence the function $F_{x,y}: t\in\T^1 \mapsto  \rme^{4 C_2(1+|I|) t^2} G(t)$ is analytic on the interior of $\T^1$ and satisfies
\[
F_{x,y}(t) = \phi(x\alpha_t(y)), \quad F_{x,y}(t+\rmi) = \phi(\alpha_t(y) \gamma(x)), \quad t\in\R,
\]
which proves \eqref{eq:gen-S2a} for all $x,y\in\bigcup_{I\in\I} \A(I)$, thus in particular in $\dom(\phi)_c$. Finally, the proof of $(S_7)$ and $(S_8)$ on $\dom(\phi)$  is found in \cite[Prop.5.3]{BG}.
\end{proofof}

\begin{proofof}[Proposition \ref{th:FF-RST}]
Let
\begin{equation}\label{eq:FF-phi-integral}
\theta(f,g) = \lim_{\eps\ra 0^+} \Big( \int _{-\infty}^{-\eps} + \int_{\eps}^{\infty}  \Big)
\frac{1}{1-\rme^{-p}} \overline{\hat{f}(p)}\hat{g}(p)\rmd p, \quad f,g\in\S(\R,\C^d),
\end{equation}
and $\dom(\theta)=\S(\R,\C^d)\times \S(\R,\C^d)$. Thus $\theta$ is densely defined and sesquilinear and completely defines a quasi-free functional $(\phi,\dom(\phi))$. Since $\dom(\phi)_I\cap \CAR(\K,\Gamma)\subset \CAR(\K_I,\Gamma)$ is norm-dense, for every $I\in\I$, and $\phi$ is hermitian there, $(S_0)$ is satisfied. $(S_2)$ including its growth condition and $(S_6)$ are proved in \cite[Th.5.7\& Th.5.9]{BG} (adapted here to $\C^d$-valued functions), while $(S_3)$ is seen as follows: for every $f\in\S(\R,\R^d)$,
\begin{align*}
\phi(\unit) =& \frac{1}{\|f\|^2_2}\phi(F(f)^*F(f)+ F(f)F(f)^*) \\
=& \lim_{\eps\ra 0^+} \frac{1}{\|f\|^2_2} \sum_{j=1}^d\Big( \int_{-\infty}^{-\eps}+ \int_\eps^\infty\Big) \frac{1}{1-\rme^{-p}} \Big(\overline{\hat{f}_j(p)} \hat{f}_j(p)+\hat{f}_j(-p) \overline{\hat{f}_j(-p)}\Big) \rmd p\\
=& \lim_{\eps\ra 0^+} \frac{1}{\|f\|^2_2} \sum_{j=1}^d\Big( \int_{-\infty}^{-\eps}+ \int_\eps^\infty\Big) \Big( \frac{1}{1-\rme^{-p}} + \frac{1}{1-\rme^{p}}\Big) \overline{\hat{f}_j(p)} \hat{f}_j(p) \rmd p\\
=& \lim_{\eps\ra 0^+} \frac{1}{\|f\|^2_2} \sum_{j=1}^d\Big( \int_{-\infty}^{-\eps}+ \int_\eps^\infty\Big) \overline{\hat{f}_j(p)} \hat{f}_j(p) \rmd p\\
=& \frac{1}{\|f\|^2_2} \| \hat{f} \|_2^2 = 1
\end{align*}
by Plancherel's identity. We remark that \eqref{eq:FF-phi-integral} is the definition used in \cite[Eq.(18)]{BG} and we always refer to that form henceforth.
\end{proofof}

\begin{proofof}[Theorem \ref{th:FF-sKMS-ln}]
Let us write $P_I$ for the orthogonal projection onto the subspace $\K_I=L^2(I,\C^d) \subset L^2(\R,\C^d)=\K$, for any $I\in\I$. Note that the vacuum state restricted to the subalgebra $\CAR(\K_I,\Gamma)$ is in fact given by the quasi-free state
\[
 \phi_{P_+}\restriction_{\CAR(\K_I,\Gamma)}= \phi_{P_I P_+ P_I} = \phi_{\tilde{P}_+}.
\]
It follows from \cite[Th.5.6]{BG} that the sesquilinear form of $\phi$  in Proposition \ref{th:FF-RST} is implemented by $P_+ + T$, where $T$, defined by
\[
\langle f,T g\rangle := 2\rmi  \int_\R \int_\R \int_0^\infty \sum_{j=1}^d \overline{f_j(x)} g_j(y) (x-y) \log(1-\rme^{-p}) \cos(p(x-y)) \rmd p \rmd x \rmd y,
\]
is an unbounded operator with form domain $\S(\R,\C^d)$ and such that $P_I T P_I$ extends to a selfadjoint trace-class operator on $\K$, for every $I\in\I$. Fix one such $I$ and let $S:=\tilde{P}_+ +\tilde{T}$ with $\tilde{P}_+:= P_IP_+P_I$ and $\tilde{T}$ the closure of $P_I T P_I$; these are all bounded selfadjoint operators on $\K$ that will be naturally identified with selfadjoint operators on $\K_I$, and
\[
\tilde{P}_+ + \Gamma\tilde{P}_+\Gamma = P_I (P_+ + \Gamma P_+ \Gamma) P_I = P_I \unit P_I = P_I, \quad 0\le \tilde{P}_+ = P_I P_+ P_I \le P_I,
\]
\[
 \tilde{T} + \Gamma\tilde{T}\Gamma = P_I(T+\Gamma T \Gamma )P_I = P_I 0 P_I =0, \quad \tilde{T}\in L^1(\K_I),
\]
as follows easily from the definition of $P_+$ in \eqref{eq:FF-P+} and of $T=-\Gamma T\Gamma$ in \cite[Th.5.6(i)]{BG}, using the obvious fact that $P_I$ commutes with the complex conjugation operator $\Gamma$. Then we infer from Theorem \ref{prop:FF-comm} that $\phi_S=\phi_{\tilde{P}_++\tilde{T}}$ is bounded and normal in the GNS representation of $\phi_{\tilde{P}_+}$.

Letting $\phi_\F$ be defined through
\[
 \dom(\phi_\F) :=  \salg\{\pi_{\phi_{P_+}} (F(f)): f\in\S(\R,\R^d)\} \subset \pi_{\phi_{P_+}} (\dom(\phi))
\]
and the condition $\phi_\F\circ\pi_{\phi_{P_+}} = \phi$ with $\phi$ from Proposition \ref{th:FF-RST}, the algebraic statements and growth conditions in $(S_0)$, $(S_2)$, $(S_3)$ follow from the corresponding ones of $\phi$ by construction, with respect to the induced welldefined grading and translation automorphisms
\[
\pi_{\phi_{P_+}}(F(f)) \mapsto  \pi_{\phi_{P_+}}\circ\gamma(F(f)), \quad
\pi_{\phi_{P_+}}(F(f)) \mapsto  \pi_{\phi_{P_+}}\circ \alpha_t(F(f)), \quad f\in\K^\Gamma,
\]
denoted again by $\gamma$ and $\alpha_t$ for simplicity. 

Concerning local normality and boundedness, we have just seen that $\phi_{\tilde{P}_++\tilde{T}}$ defines a bounded normal functional $\phi_{\F,I}$ on the C*-algebra $\pi_{\phi_{\tilde{P}_+}}(\CAR(\K_I,\Gamma))$ through $\phi_{\F,I}\circ\pi_{\phi_{\tilde{P}_+}} = \phi_{\tilde{P}_++\tilde{T}}$, coinciding with $\phi_\F$ on $\dom(\phi_\F)\cap \F(I)$. Thus $\phi_{\F,I}$ extends to a unique bounded normal functional on the $\sigma$-weak closure $\F(I) = \{\pi_{\phi_{P_+}} (F(f)): f\in\K_I^\Gamma \}''$. $(S_6)$ is shown in \cite[Th.5.9]{BG}, and it holds on $\bigcup_{I\in\I}\F(I)$ owing to local normality and boundedness $(S_1)$.

Concerning uniqueness, consider the Clifford C*-algebra $A$ generated by all $F(f)$, $f\in\K^\Gamma$, which is a subalgebra of $\CAR(\K,\Gamma)$. Let
\[
\S_0(\R,\R^d) := \spa \{ \int_\R \hat{\beta}(t) f(\cdot - t) \rmd t : \beta \in \Cci_c(\R), f\in \S(\R,\R^d)\}
\subset \S(\R,\R^d),
\]
which can be checked to be again dense in $\K^\Gamma$. We follow the lines of \cite{RST}, but instead of  the KMS condition we consider the sKMS condition. Notice that the C*-dynamical system $(A,\alpha)$ is independent of the grading, so, as explained in \cite[Prop.1]{RST}, $t\in\R\mapsto \alpha_t(F(f))$ extends uniquely to an analytic function on $\C$, for every $f\in\S_0(\R,\R^d)$, and the monomials in $F(f)$, $f\in\S_0(\R,\R^d)$, lie all in the algebra $A_\alpha$ of analytic elements. Moreover, there are two commuting symmetric operators $U,\rmi V$ on the common invariant domain $\S_0(\R,\R^d)$ satisfying the conditions in \cite[Prop.2,3,4]{RST}, with $\rmi V>0$ and $U>\unit$ as quadratic forms and preserving $\S_0(\R,\R^d)$, such that $\alpha_{\rmi}(F(f))= F(Uf)+\rmi F(Vf)$, for all $f\in\S_0(\R,\R^d)$, \cf \cite[(2.12)\&(2.13)]{RST}.

Suppose there exists a functional $\psi$ on $A$ satisfying $(S_0),(S_2),(S_3)$ and $\dom(\psi)=\salg \{ F(f): f\in \S(\R,\R^d)\}$. We have to show that $\psi=\phi\restriction_{\dom(\psi)}$, which then yields the uniqueness of $\phi_\F$. Since $\psi$ is defined on monomials (by assumption), we obtain an (unbounded) hermitian sesquilinear form $\theta$ on $\S_0(\R,\R^d)\times \S_0(\R,\R^d) \subset \dom(\theta)\subset \K\times \K$ by $\theta(f,g)= \psi(F(f)^* F(g))$. Using hermiticity, the canonical anticommutation relation \eqref{eq:FF-CAR} and normalization $\psi(\unit)=1$, such a $\theta$ must satisfy
\[
\overline{\theta(g,f)}=  \theta(f,g) = -\theta(g,f)+\langle f,g \rangle, \quad f,g\in\S_0(\R,\R^d).
\]
It is useful to introduce the antisymmetric real bilinear form $\eta:= \rmi (2 \theta -\langle\cdot,\cdot\rangle)$. Let us choose
\[
A_{\alpha,\psi}:= \salg \{ F(f): f\in \S_0(\R,\R^d)\} \subset A_\alpha \cap \dom(\psi)
\]
since the functions $t\mapsto \psi(x\alpha_t(y))$ are analytic on $\T^1$, for every $x,y\in A_{\alpha,\psi}$, as shown in \cite[Th.5.6(ii)]{BG}.

Then according to Proposition \ref{prop:gen-alpha-inv} condition $(S_2')$ holds on $A_{\alpha,\psi}$:
\[
\theta(f,(U+\rmi V)g) = - \theta( g, f), \quad f,g\in\S_0(\R,\R^d),
\]
in other words
\[
\langle f,Ug \rangle - \rmi \eta(f,Ug) + \rmi \langle f,Vg \rangle + \eta(f,Vg)  
=  - \langle f, g \rangle - \rmi \eta(f, g), \quad f,g\in\S_0(\R,\R^d) .
\]
Since $f,g,\eta$ are real-valued, as in \cite[Lem.2]{RST} we may split the preceding equality into real and imaginary part, thereby obtaining
\[
 \eta(f,(U-\unit)g) = \langle f, V g \rangle,
\]
so $\eta(f,g)= \langle f,V(U-\unit)^{-1}g\rangle$ on $\S_0(\R,\R^d)\times\S_0(\R,\R^d)$. 
Applying \cite[(2.18),(2.19)]{RST} for $U$ and $V$, we see that $\theta(f, g)=\frac12(\langle f,g\rangle -\rmi \eta(f,g))$ is given by
\[
\lim_{\eps\ra 0^+} \frac12 \Big( \int _{-\infty}^{-\eps} + \int _{\eps}^{\infty}  \Big)
\Big(1+ \frac{\rme^{-p}-\rme^{p}}{2-\rme^p-\rme^{-p}}\Big) \overline{\hat{f}(p)}\hat{g}(p)\rmd p,
\]
whose continuation to $\S(\R,\R^d)\times\S(\R,\R^d)$ is \eqref{eq:FF-phi-integral} (unique owing to continuity of Fourier transformation and the denseness of $\S_0(\R,\R^d)$). The requirement of quasi-freeness then determines $\psi$ completely, namely $\psi=\phi\restriction_{\dom(\psi)}$.
\end{proofof}


\begin{proofof}[Lemma \ref{lem:FF-delta}]
The proof follows closely the local construction in \cite[Sec.5]{CHKL} making use of  the general theory in \cite[Sec.2]{CHKL}, although here we are working with the supersymmetric free field net and with $\alpha$ representing translation on $\R$ instead of rotation on $S^1$; we refer the reader to those places for further details.

The super-Virasoro net  \cite[Sec.6.1]{CKL} is the net generated by the unbounded selfadjoint even and odd operators $L(f),G(f)$ on $\H$, respectively, $f\in\Cci_c(\R)$, which satisfy the following (graded) commutation relations on the common invariant core $\D_\infty\subset\H$ (\cf \cite[Lem.4.5\&4.6]{CLTW2} together with \cite[Sec.4]{CHKL} and \cite[Ex.2.10]{CHL}):
\begin{equation}\label{eq:FF-SVir}
\begin{gathered}
\ [L(f), L(g)] = -\rmi L(f'g)+ \rmi L(fg') +\rmi\frac{c}{12} \int_{\R} f'''g,\\
[L(f), G(g)] = \rmi G(f g') -\frac{\rmi}{2} G( f'g),\\
[G(f), G(g)] = 2L (fg) + \frac{c}{3} \int_{\R} f'g'.
\end{gathered}
\end{equation}
The super-Virasoro field operators $L(f), G(f)$ are obtained from the supersymmetric free field by the super-Sugawara construction \cite[(6.4)]{CHL} (with $l+g=1$ and $J^{\pi^\F}=0$ there, due to the U(1)-current or free field commutation relations). They satisfy the following (graded) commutation relations:
\begin{equation}\label{eq:FF-SVirFF}
\begin{gathered}
\ [J(f),L(g)] = -\rmi J(f'g), \quad [F(f),L(g)]= -\rmi F(f'g)-\frac{\rmi}{2} F(fg'),\\
[J(f),G(g)] = -\rmi F(f'g), \quad [F(f),G(g)]= J(fg),
\end{gathered}
\end{equation}
for $f\in\Cci_c(\R,\R^d)$ and $g\in\Cci_c(\R)$. These relations are again to be understood on the common invariant core $\D_\infty$. This shows that the free field net $\A$ on $\H$ contains the super-Virasoro net $\A_{\SVir,c}$ with central charge $c=\frac32 d$ as subnet, \cf \cite[Def.2.11\&Prop.6.2]{CHL}; they are covariant with respect to the same projective representation of $\Diff^{(\infty)}$, so $\A$ is superconformal.

Let $\varphi_I\in\Cci_c(\R)$ denote an arbitrary but fixed function with $\varphi_I\restriction _I=1$, and set $Q_I:= G(\varphi_I)$, which defines the superderivation $\delta_{Q_I}$ as prescribed in \eqref{eq:FF-dom-QI}. Then according to \eqref{eq:FF-SVirFF} $\delta_{Q_I}$ satisfies \eqref{eq:FF-delta-formal} on $\D_\infty$. Applying then Borel functional calculus, almost literally as in the proof of \cite[Prop.6.4]{CHL} but with $Q_I$ instead of $Q$ and $J(f)$ U(1)-currents instead of $G$-currents, we obtain \eqref{eq:FF-delta-vNa} on $\AA_0\cap\A(I)\subset \dom(\delta_{Q_I})$, which was to be proved. In particular, in analogy to \cite[Prop.5.3]{CHKL}, $\delta_{Q_I}$ does not depend on the actual choice of $\varphi_I\in\Cci_c(\R)$ as long as $\varphi_I\restriction _I=1$.

According to the general theory of superderivations implemented as graded commutators in \cite[Prop.2.1]{CHKL}, it follows that $\delta_{Q_I}$ is ($\sigma$-weak)-($\sigma$-weakly) closed and satisfies property $(ii)$.
\end{proofof}

\begin{proofof}[Lemma \ref{lem:FF-xh}]
The generator $P$ of the translation group is positive by assumption and preserves $\D_\infty\subset\H$, the above-mentioned common invariant core for all the field operators $F(f),J(f),G(g),L(g)$, with $f\in\Cci_c(I,\R^d)$ and $g\in\Cci_c(I)$. On this core, we have
\[
 L(\varphi_I^2) X(f) \xi - X(f)L(\varphi_I^2) \xi
= \rmi  X(f')\xi
= P X(f) \xi  - X(f)P \xi, \quad \xi\in\D_\infty,
\]
according to \eqref{eq:FF-SVirFF} and  the definition of $P$, with $X$ standing for $J,F$. Passing then to resolvents and applying Borel functional calculus as in the proof of \cite[Prop.6.4]{CHL} with $Q$ replaced by $L(\varphi_I^2)-P$, we find that
\[
 x (L(\varphi_I^2)-P) \subset (L(\varphi_I^2)-P) x + 0, 
\]
for all $x\in\AA_0\cap\A(I)$.

The rest of the proof goes along the lines of \cite[Prop.5.6]{CHKL}, except that we are now over $\R$ instead of $S^1$, and $\alpha$ implements translation instead of rotation, namely: given $x\in\AA_0\cap\A(\frac12 I)$ and $h\in\Cci_c(\frac12 I)$ and $Q_I:= G(\varphi_I)$ as in the proof of the preceding lemma, we see that 
\[
Q_I\alpha_t(x)\xi = \alpha_t(Q_{I-t} x)\xi = \alpha_t(\gamma(x) Q_{I-t}) \xi + \alpha_t(\delta_{Q_{I-t}}(x)) \xi
= \gamma\alpha_t(x) Q_{I} \xi + \alpha_t(\delta_{Q_I}(x)) \xi,    
\]
for all $t\in\frac12 I$.
Thus $x_h\in \dom(\delta_{Q_I})$, with $\delta_{Q_I}(x_h)=(\delta_{Q_I}(x))_h$ using simply the definition of $x_h$, and analogously as above, we obtain $x_h (L(\varphi_I^2)-P) \subset (L(\varphi_I^2)-P) x_h$. But since
\[
 x_h= \int_\R \alpha_t(x) h(t) \rmd t
=\int_\R \rme^{\rmi t P} x \rme^{-\rmi t P} h(t) \rmd t,
\]
$x_h$ lies in the domain of the commutator with $P$, whose closure is given by $\delta_0(x_h)=\rmi x_{h'}$, \cf \cite[Lem.2.11]{CHKL} and \cite[Prop.3.2.55]{BR2}.

Moreover, since $x_h\xi, \delta_{Q_I}(x_h) \xi\in\D_\infty$, for all $\xi\in\D_\infty$, we may compute as follows:
\begin{align*}
G(\varphi_I)\delta_{Q_I}(x_h)\xi 
= & G(\varphi_I)^2 x_h\xi- G(\varphi_I)\gamma(x_h)G(\varphi_I)\xi  \\
= & G(\varphi_I)^2 x_h\xi 
-\delta_{Q_I}(\gamma(x_h))G(\varphi_I)\xi
-x_hG(\varphi_I)^2\xi \\
= & L(\varphi_I^2)x_h\xi -x_h L(\varphi_I^2)\xi
-\delta_{Q_I}(\gamma(x_h))G(\varphi_I) \xi \\ 
= & (L(\varphi_I^2)-P) x_h\xi -x_h (L(\varphi_I^2)-P)\xi +P x_h \xi - x_h P \xi
-\delta_{Q_I}(\gamma(x_h))G(\varphi_I) \xi \\ 
= & P x_h\xi -x_h P\xi 
-\delta_{Q_I}(\gamma(x_h))G(\varphi_I) \xi \\
= & \rmi x_{h'}\xi -\delta_{Q_I}(\gamma(x_h))G(\varphi_I)\xi \\
= & \rmi x_{h'}\xi + \gamma(\delta_{Q_I}(x_h))G(\varphi_I)\xi, 
\end{align*}
thus $x_h\in\dom(\delta_{Q_I}^2)$ with $\delta_{Q_I}^2(x_h) = \delta_0(x_h) = \rmi x_{h'}$. Since $h'\in\Cci_c(\frac12 I)$ again, one continues recursively to obtain the statement of the lemma.

\end{proofof}

\begin{proofof}[Theorem \ref{th:FF-sKMS1} (remaining part)]
With assumptions and notation as above and in the first part of the proof of Theorem \ref{th:FF-sKMS1}, we have to show
\[
\psi_I\circ \delta_I (x) = 0, \quad x \in B(I):= \salg\{y, \delta(y): y\in\AA_0\cap\A(I)\}.
\]

We follow the proof of \cite[Th.5.8]{BG}. Although the Weyl operators are not treated there explicitly, the strong regularity of the quasifree state $\phi_\B$ together with the discussion in \cite[p.705-706 \& (10)]{BG} shows that the functional $\psi_I$ (as introduced there with the symbol $\phi$) is well-defined on
\[
 \salg\{ F(f)(J(f)+\rmi)^{-1}, J(f), (J(f)+\rmi)^{-1}, W(f): f\in\Cci_c(I,\R^d)\},
\]
so in particular on $B(I)$, which generalizes \cite[Lem.8.2]{BG}. With $f_i\in\Cci_c(I,\R^d)$, we calculate (writing $R(f)$ for $(J(f)+\rmi)^{-1}$), for all integers $0\le l\le m\le n$:
\begin{align*}
\psi_I\circ\delta_I & \Big(F(f_1) \cdots F(f_l) R(f_1)\cdots R(f_l) J(f_{l+1})\cdots J(f_{m})
W(f_{m+1})\cdots W(f_n)\Big) \\
=& (-1)^l(-\rmi)^{m-l} \int_0^\infty \cdots\int_0^\infty \frac{\partial^{m-l}}{\partial t_{l+1}\cdots\partial t_m}\\
&\quad\psi_I\circ\delta_I\Big(F(f_1)\cdots F(f_l) W(t_1f_1) \cdots W(t_n f_n)\Big) \rme^{-t_1-...-t_l} \rmd t_1 \cdots \rmd t_l \restriction^{t_{l+1}...t_m =0}_{t_{m+1}...t_n=1} \\
=& \sum_{i=1}^l (-1)^{i-1}\psi_I\Big(F(f_1)\cdots\v i.\cdots F(f_l)\Big) 
(-1)^l(-\rmi)^{m-l} \int_0^\infty\cdots\int_0^\infty  -\rmi \frac{\partial^{m-l+1}}{\partial t_0 \partial t_{l+1}\cdots\partial t_m}\\
&\quad
\psi_I\Big( W(t_0f_i) W(t_1 f_1) \cdots W(t_n f_n)\Big) \rme^{-t_1-...-t_l} \rmd t_1 \cdots \rmd t_l \restriction^{t_0,t_{l+1}...t_m =0}_{t_{m+1}...t_n=1} \\
&+ (-1)^l\sum_{k=1}^n\psi_I\Big(F(f_1)\cdots F(f_l)F(f_k')\Big)
(-1)^l(-\rmi)^{m-l} \int_0^\infty\cdots\int_0^\infty  \frac{\partial^{m-l}}{\partial t_{l+1}\cdots\partial t_m}\\
&\quad
\psi_I\Big( W(t_0f_i) W(t_1 f_1) \cdots W(t_n f_n)\Big) (-t_k)\rme^{-t_1-...-t_l} \rmd t_1 \cdots \rmd t_l \restriction^{t_0,t_{l+1}...t_m =0}_{t_{m+1}...t_n=1}.
\end{align*}
By means of derivatives and Laplace transforms, we reduced everything on the bosonic free field part to products of Weyl unitaries because for the latter ones we have the following handy expression (from \cite[p.706\&(15)]{BG}):
\[
\psi_I(W(f_1)\cdots W(f_n)) = \exp \Big( -\sum_{j<k} \rmi\theta(f_j,f_k') - \frac12 \sum_{j=1}^n \rmi\theta(f_j,f_j') \Big),
\]
where $\theta$ is the two-point function for $\phi_\F$ defined in \eqref{eq:FF-RST-2pt}.
The quasifreeness of $\phi_\F$ implies that the above expression can be nonzero only if $l$ is odd, which we shall assume from now on. It implies also that products of $(l+1)$ fields can be split into $2$ and $(l-1)$ in the usual manner, so that we obtain for the above expression
\begin{align*}
\psi_I\circ\delta_I\Big(F(f_1) & \cdots F(f_l) R(f_1)\cdots R(f_l) J(f_{l+1})\cdots J(f_{m})
W(f_{m+1})\cdots W(f_n)\Big) \\
=& \sum_{i=1}^l (-1)^{i}\psi_I\Big(F(f_1)\cdots\v i.\cdots F(f_l)\Big) 
(-\rmi)^{m-l} \int_0^\infty\cdots\int_0^\infty  \frac{\partial^{m-l}}{\partial t_{l+1}\cdots\partial t_m}\\
&\quad
\Big(-\rmi \frac{\partial}{\partial t_0} - \sum_{k=1}^n (-t_k) \psi_I(F(f_i)F(f_k')) \Big)\\
&\quad
\psi\Big( W(t_0f_i) W(t_1 f_1) \cdots W(t_n f_n)\Big) \rme^{-t_1-...-t_l} \rmd t_1 \cdots \rmd t_l \restriction^{t_0,t_{l+1}...t_m =0}_{t_{m+1}...t_n=1} \\
=& \sum_{i=1}^l (-1)^{i}\psi_I\Big(F(f_1)\cdots\v i.\cdots F(f_l)\Big) 
(-\rmi)^{m-l} \int_0^\infty\cdots\int_0^\infty   \frac{\partial^{m-l}}{\partial t_{l+1}\cdots\partial t_m}\\
&\quad
\Big(-\rmi \sum_{j=1}^n t_j (-\rmi)\theta(f_i,f_j') - \sum_{k=1}^n (-t_k) \theta(f_i,f_k') \Big)\\
&\quad
\psi_I\Big( W(t_0f_i) W(t_1 f_1) \cdots W(t_n f_n)\Big) \rme^{-t_1-...-t_l} \rmd t_1 \cdots \rmd t_l \restriction^{t_0,t_{l+1}...t_m =0}_{t_{m+1}...t_n=1}\\
=& 0
\end{align*}
Thus $\psi_I\circ\delta_I=0$ on $B(I)$, for every $I\in\I$.
\end{proofof}

\bigskip

{\footnotesize 

\noindent\textbf{Acknowledgements.}
I would like to thank Paolo Camassa, Sebastiano Carpi, and Roberto Longo for several helpful and pleasant discussions, for comments and for pointing out problems.
}

\bigskip

\end{document}